\newtheorem{mydef}{Definition}[section]
\newtheorem{prop}{Proposition}[section]
\newtheorem{theo}{Theorem}[section]
\newtheorem{cor}{Corollary}[section]
\begin{document}

Tsemo Aristide,

College Bor\'eal,

1 Yonge Street, Toronto ON 

M5E 1E5, Canada.

tsemo58@yahoo.ca

\bigskip
\bigskip

{\bf Grothendieck topos, gerbes, and lifting actions of groups objects.}

\bigskip
\bigskip

\centerline{\bf Abstract.}

{\it  Let $C$ be a Grothendieck topos, $G$ and $H$ group objects of $C$. Let $p:P\rightarrow X$ be an $H$-torsor. Suppose that $X$ is endowed with an action of $G$. In this paper, we study the obstructions to lift the action of $G$ on $X$ to $P$ by using non commutative cohomology. Firstly, when a natural condition is satisfied,  we associate to this problem an extension of groups objects in $C$ whose splittings correspond to the liftings of the action of $G$. We apply the results obtained to the categories of topological and differentiable manifolds, and to the category of schemes. For the categories of differentiable manifolds and affine varieties defined over a closed field, we use also another approach induced by the slice theorems of Koszul and Luna which enable to define Grothendieck topologies for $G$-invariant neighborhoods.    This lifting problem has been studied in several categories by Brion, Hambleton, Hattori, Haussman, Lashof, May, Yoshida,... We recover and generalize  some of their results.}

\section{Introduction}

Let $M$ be a finite dimensional differentiable manifold endowed with the differentiable left action of the finite dimensional Lie group $G$. Consider a Lie group $H$ and a principal $H$-bundle $p:P\rightarrow M$. We say that the action of $G$ can be lifted to $P$, or equivalently that $P$ is  a $(G,H)$-bundle, if there exists  a left differentiable action of $G$ on $P$ which commutes with the right action of $H$ on $P$, and such that $p$ is a $G$-morphism.  $(G,H)$-bundles have been studied by many authors; among them, we can mention Brandt and Hausmann [3], Hambleton and Hausmann [11] and [12]. In [3], the authors have characterize the existence of liftings of the action of $G$ on $M$ to $P$ by using topological conditions and gauge theory. In [11] they have classified $(G,H)$-bundles for which the base space is $S^n$ endowed with the canonical action of $SO(n)$ which fixes the south and north poles and in [12] they have studied $(G,H)$-bundles over split spaces.

A similar question has been studied by many authors in the category of topological manifolds, among them, we can quote Hattori [13], Lashof, May, Segal [14], and Stewart [19];  their results are essentially homotopic.

In algebraic geometry, a similar question is also intensively studied in particular over algebraic homogeneous spaces. The purpose of this paper is to find a common foundation to this question which appears in three different fields of geometry. This foundation is the theory of topoi of Grothendieck. The concept of Grothendieck's topology was defined by Grothendieck in the setting of category theory which has leaded him to introduce the notion of  topos: this is a category equivalent to the category of sheaves defined on a $U$-category endowed with a Grothendieck's topology, where $U$ designs an universe. As said Grothendieck in r\'ecoltes et s\'emailles:

\smallskip
" Cette
id\'ee englobe, dans une intuition topologique commune, aussi bien les traditionnels espaces (topologiques),
incarnant le monde de la grandeur continue, que les (soi-disant) "espaces" (ou "vari\'et\'es") des g\'eometres
alg\'ebristes abstraits imp\'enitents, ainsi que d'innombrables autres types de structures, qui jusque la avaient
sembl\'e riv\'ees irr\'em\'ediablement au "monde arithm\'etique" des agr\'egats "discontinus" ou "discrets".
\smallskip

We start this paper by defining the lifting problem for a $H$-torsor $p:P\rightarrow X$ over an object $X$ of a topos $C$ endowed with the effective action of the group $G$. We suppose that $p:P\rightarrow X$ verifies the condition $C1$ which implies the existence of an exact sequence of group objects of $C$:

$$
1\rightarrow {\cal G}(P)\rightarrow Aut_G(P)\rightarrow G\rightarrow 1
$$

such that  the existence  of the lifting of the action of $G$ to $P$ is equivalent to the splitting of this exact sequence.   Such a question has been studied by Giraud in [6]. To study it, we apply the notion of gerbe and define a non commutative $2$-cocycle which is the obstruction of the existence of a lifting. This approach is an application  of the extension of Grothendieck classifying spaces studied by Giraud.

We apply this general theory to various situations. Firstly, we consider the lifting problem in the category of topological spaces. The results that we obtain here generalize the results of Hattori and Yoshida [13].  We also generalize some results obtained by Stewart [19] in this context.

 After, we consider the category of differentiable manifolds. We are particular interested to compact manifolds endowed with action of a compact Lie group. This enables to define two Grothendieck's topologies $b_{G,M}$ and $s_{G,M}$ adapted to the lifting problem by considering invariant neighborhood defined by Koszul. The first one is equivalent to a site defined on the principal blow-up of the $G$-manifod $M$. The sheaves of categories $p_G^b:F_G^M\rightarrow b_{G,M}$ (resp. $p_G^s:F_G^s\rightarrow s_{G,M}$) which represent the geometric obstruction of the lifting problem defined on $b_{G,M}$ (resp. $s_{G,M}$) are not always gerbes. Thus, to apply gerbe theory in these settings, we suppose that a condition $C2$ is verified.
When the  condition $C2$ is satisfied, we define gerbes which are subsheaves of categories of $p_G^b:F_G^b\rightarrow b_{G,M}$  which represent the geometric obstructions of the existence of a lifting of the action of $G$ to $P$ if the action of $G$ is principal.
To study the related question for $s_{G,M}$,  We add a condition $C3$ which expresses a relation between the isotropic groups. This enables us to classify $(G,H)$-bundles over $S^n$ endowed with the action of $SO(n)$ defined above.

By using the Luna's slice theorem, we also develop a theory similar to the theory developed in the context of differential geometry in algebraic geometry.

\section{Grothendieck topologies and torsors.}

The categories used in this section are ${\cal U}$-categories in respect of a fixed Grothendieck universe ${\cal U}$ which contains an element of infinite cardinal. {\bf A small category} will refer to a category which is ${\cal U}$-small. We also suppose that  the  categories used in this section are complete and cocomplete.  If $C$ is such a category, we denote by $1_C$ the final object of $C$ and for every object $X$ of $C$, $1_X:X\rightarrow 1_C$ is the final map.
We will adopt the following notations:

For every objects $X$ and $Y$ of $C$, we will often denote $Hom_C(X,Y)$ by $Y(X)$.

Let $(u_i:X_i\rightarrow X)_{i\in I}$ be a family of morphisms of $C$.
For every finite subset $\{i_1,...,i_p\}$  of $I$, we  denote by $X_{i_1...i_p}$ the fibre product $X_{i_1}\times_X...\times_XX_{i_p}$, by $u_{i_1...i_p}:X_{i_1...i_p}\rightarrow X$ and $u^{j_1...j_l}_{i_1...i_p}: X_{i_1..i_pi_{j_1}..i_{j_l}}\rightarrow X_{i_1...i_p}$ the canonical projections, where $\{j_1,...,j_l\}$ is another subset of $I$.

\begin{mydef}

Let $C$ be a category for every object $X$ of $C$, we denote by $C/X$ the {\bf comma category} over $X$. The objects of $C/X$ are morphisms $f:Y\rightarrow X$. A morphism $h$ between the objects $(f:Y\rightarrow X)$ and $(g:Z\rightarrow X)$ of $C/X$ is a morphism of $C:h:Y\rightarrow Z$ such that $g\circ h=f$.

A  full subcategory $S$  of $C$ is a {\bf sieve} if and only if for every morphism $f:X\rightarrow Y$ of $C$, the fact that $Y$ is an object of $S$ implies that $X$ is an object of $S$.

We say that the sieve $R$ is {\bf generated} by the family $(X_i)_{i\in I}$ of objects of $C$   if and only if $R$ is a subcategory of every sieve $R'$ of $C$ such that for every $i\in I$, $X_i$ is an object of $R'$.
\end{mydef}

\begin{mydef}

We say that the category $C$ is endowed with the {\bf Grothendieck  topology} $J$ if and only if
for every object $X$ of $C$, there exists a  non empty family of sieves $J(X)$ of $C/X$ such that:

1. For every element $R\in J(X)$ and every morphism $f:Y\rightarrow X$ of $C$, $R^f=\{g:Z\rightarrow Y: f\circ g\in R\}$ is an element of $J(Y)$.

2.  A sieve $R$ of $C/X$ is an element of $J(X)$ if and only if for every $R'\in J(X)$ and every object $f:Y\rightarrow X$ of $R'$, ${R}^f\in J(Y)$.

A {\bf site} $(C,J)$ is a category $C$ endowed with the Grothendieck topology $J$.

\end{mydef}

Remark that for every object $X$ of the site $(C,J)$, $J$ induces on $C/X$ a Grothendieck topology that we denote by $(C/X,J_X)$.

\begin{mydef}
Let $(C,J)$ be a site, $C^0$ the opposite category of $C$ and $Set$ the category of sets. A {\bf presheaf} defined on $C$ is a functor $S:C^0\rightarrow Set$. A {\bf sheaf} defined on $(C,J)$ is a presheaf $S$ such that:

for every object $X$ of $C$ and every sieve $R\in J(X)$, $lim_{Y\rightarrow X\in R}S(Y) =S(X)$.

For every morphism $f:X\rightarrow Y$ of $C$, the application $r(f):S(Y)\rightarrow S(X)$ is called the {\bf restriction} associated to $f$.

We denote  the category of sheaves defined on the site $(C,J)$ by $Sh(J,C)$.
\end{mydef}

The Yoneda embedding allows to define for each object $X$ of $C$ the presheaf $h_X$ on $C$ defined by $h_X(Y)=X(Y)$. {\bf The canonical topology of $C$} is the finest topology such that for each object $X$ of $C$, $h_X$ is a sheaf. Suppose that $C$ is endowed with its canonical topology $J$, the Yoneda embedding allows to identify $C$ to a subcategory of $Sh(C,J)$.

In the rest of the paper, we are going to suppose that the Grothendieck topology  $J$ of a site $(C,J)$ is less fine than the canonical topology of $C$. 

\begin{mydef}
A category $C$ is a topos if and only if $C$ is equivalent to the category of sheaves defined on a  $U$-site.

Let $(C,J)$ be a site for every object $X$ of $C$, {\bf the topos associated to $X$} $Top(X)$ is the topos equivalent to the category of sheaves defined on $(C/X,J_X)$.
\end{mydef}
 
 In this paper, we are going to study examples of algebraic  structures defined  in a category which can be defined intrinsically or with the Yoneda embedding as mentioned Grothendieck in [9]. We choose the second approach here to define the notions of group and group object in a category. See also Giraud [6] p. 106.
 
\begin{mydef}
A {\bf group object} $G$ in the category $C$ is an object $G$ such that for every object $X$ of $C$, the set $G(X)$ is endowed with the structure of a group  $(G(X),m_X)$ such that for every morphism $f:X\rightarrow Y$, the map $G(Y)\rightarrow G(X)$ which sends $g$ to $g\circ f$ is a morphism of groups.

A morphism $f$ between the group objects $G$ and $H$ is an element of $H(G)$ such that for every object $X$ of $C$, the morphism $G(X)\rightarrow H(X)$ which associates $f\circ g$ to every element $g$ of $G(X)$ is a morphism of groups. 
\end{mydef}

\begin{mydef}
Let $G$ be a group object of the category $C$, a {\bf subgroup} $L$ of  $G$ is a group object $(L,m_L)$ such that there exists  a monomorphism $im_L:L\rightarrow G$ which is a morphism of groups objects. 
\end{mydef}

Let $C$ be a category and $G$ a group object of $C$. We say that the object $X$ of $C$ is endowed with a {\bf  left action} $l_G^X$ of $G$ (resp. $X$ is endowed with a {\bf right action} $r_G^X$ of $G$) or equivalently that $X$ is a left $G$-object (resp. a right $G$-object) if and only if for every object $Y$ of $C$, the set $X(Y)$ is endowed with a structure of a left $G(Y)$-set $(X(Y),l_G^X(Y))$ (resp. with the structure of a right $G(Y)$-set $(X(Y),r_G^X(Y))$ such that for every morphism $f:Y\rightarrow Z$ of $C$, the map $X(Z)\rightarrow X(Y)$ which associates to $g\circ f$ to $g\in X(Z)$ is a morphism of left $G(Y)$-sets (resp. right $G(Y)$-sets).

We say that the left action defined by $l_G^X$ (resp. the right action $r_G^X$) is {\bf trivial} if for every object $Y$ of $C$, $(X(Y),l_G^X(Y))$ (resp. $X(Y),r_G^X(Y)))$ is trivial. 

Remark that if the object $X$ is endowed with a left action of $G$, we can associate to it a right action of $G$ such that for every object $Y$ of $C$, the group $G(Y)$ acts on $X(Y)$ by ${l_G^X(Y)}^{-1}$.

A subgroup $L$ of $G$ endows $G$ with a left $L$-action (resp. an $L$-right action) defined for every object $X$ of $C$ by the canonical left action (resp. right action) of $L(X)$ on $G(X)$ induced by the group structure of $G(X)$. 

Let $X$ be an object endowed with a left action (resp. with a right action) of $G$ we say that the {\bf left quotient} (resp. the {\bf right quotient}) of $X$ by $G$ exists if the presheaf defined on $C$ which associates to every object $Y$ of $C$ the left quotient of $X(Y)$ by $G(Y)$ (resp. the right quotient of $X(Y)$ by $G(Y)$) is representable. We denote by $G/X$ (resp. $X/G$) an object which represents it.

Remark that $G/X$ (resp. $X/G$) always exist in the topos $Sh(C,J)$ and is often called an algebraic space. 

Let $L$ be a subgroup of the group object $G$. We suppose that the quotient $L/G$  (resp. $G/L$) always exists and we call it a {\bf left homogeneous space} (resp. a {\bf right homogeneous space}). 

\medskip

We say that the left action of the group object $G$ and the right action of the group object $H$ defined on the object $X$ of $C$ commute if and only if for every object $Y$ of $C$, the left action of $G(Y)$ on $X(Y)$ and the right action of $H(Y)$ on $X(Y)$ commute.

\medskip
We recall the following definitions (see [6]. p. 117 and 126)
\begin{mydef}
Let $X$ be an object  object of the topos $(C,J)$ and $H$ a group object of $C/X$.  The right $H$-object $p:P\rightarrow X$ of $C/X$ is a $H$-{\bf torsor} if and only if:

- $p$ is epimorphic,

-$u:P\times_XH\rightarrow P\times_XP$ such that for every object $Y$ of $C$,

$u(Y):P(Y)\times_{X(Y)}H(Y)\rightarrow P(Y)\times_{X(Y)}P(Y)$ defined by $u(Y)(p,h)= (p,ph)$ is an isomorphism. 

The morphism $p:P\rightarrow X$ is a $H$-torsor of the site $(C/X,J_X)$ if $p$ is a $H$-torsor of the topos $Sh(C/X,J_X)$.
\end{mydef}

Remark that if $H$ is a group object of the topos $(C,J)$, we can associate to $H$ the group object $H_X$ of $C/X$ defined by $p_X:H\times X\rightarrow X$ where $p_X$ is the projection on the second factor and define the notion of $H_X$-torsor. Suppose that $P$ is a right $H_X$-object, the morphism $p:P\rightarrow X$ is an $H_X$-torsor if and only if there exists a sieve $R$ of $J(X)$, such that for every object $f_i:X_i\rightarrow X$, the pullback $p_i$ of $p$ by $f_i$ is isomorphic to $ p_{X_i}:X_i\times H\rightarrow X_i$.
See also [5] definition IV. 5.1.5. 

 Let $f_i^j:X_i\times_XX_j\rightarrow X_i$ be the projection on the first factor. There exists an isomorphism $u_{ij}:{f_j^i}^*p_j\rightarrow {f_i^j}^*p_i$ such that for every $Y$, $u_{ij}(Y):X_i(Y)\times_{X(Y)}X_j(Y)\times H(Y)\rightarrow X_i(Y)\times_{X(Y)}X_j(Y)\times H(Y)$ is an automorphism defined by $u_{ij}(Y)(x,y)= (x,yv_{ij}(x))$ where $v_{ij}(x)$ is an element of $H(Y)$. Remark that the family $(u_{ij})_{i,j\in I}$ verifies the Chasles relation that is: $u_{ij}^ku_{jk}^i = u_{ik}^j$. Conversely, a family of morphisms $u_{ij}:{f_j^i}^*p_j\rightarrow {f_i^j}^*p_i$ which satisfies the Chasles relation defines an $H_X$-torsor over $X$.

\begin{prop}
The action of $H_X$ of the $H_X$-torsor $p:P\rightarrow X$ induces an action of $H$ on $P$.
\end{prop}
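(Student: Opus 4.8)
The plan is to unwind the data of the $H_X$-action on $P$ as an object of $C/X$ and transport it across canonical isomorphisms to a right action of $H$ on $P$ in $C$; the torsor hypothesis itself will not be used, only the $H_X$-object structure. Recall first that, since $H_X$ is the group object $p_X:H\times X\to X$ of $C/X$, for each object $f:Y\to X$ of $C/X$ the rule sending $g:Y\to H\times X$ with $p_X\circ g=f$ to $pr_1\circ g\in H(Y)$ is a bijection $Hom_{C/X}(f,p_X)\cong H(Y)$ which, by the construction of $H_X$, is an isomorphism of groups and is natural in $f$. Recall also that there is a canonical isomorphism in $C$
$$P\times_X H_X\;=\;P\times_X(H\times X)\;\cong\;P\times H,$$
because the equation $p(u)=pr_2(v)$ cutting out the fibre product forces the $X$-component of $v\in H\times X$ to equal $p(u)$; the inverse isomorphism is $(u,h)\mapsto\bigl(u,(h,p(u))\bigr)$, and it is compatible with the maps to $X$.

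At the level of objects, the $H_X$-action on $P$ amounts to a morphism $a:P\times_X H_X\to P$ of $C/X$ satisfying the unit and associativity identities relative to the unit and the multiplication of the group object $H_X$. Composing $a$ with the canonical isomorphism above gives a morphism $\bar a:P\times H\to P$ of $C$, and I would verify that $\bar a$ exhibits $P$ as a right $H$-object: the unit and associativity diagrams for $\bar a$ are obtained from those for $a$ by transport along the canonical isomorphisms, using that the unit of $H_X$ corresponds under the identification to the unit of $H$ and that, under the same identifications, the multiplication $H_X\times_X H_X\to H_X$ corresponds to the multiplication $H\times H\to H$.

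Since actions in this paper are formulated through the functor of points, I would also give the pointwise version, which is presumably the form used later. For an object $Y$ of $C$, composing morphisms $Y\to P$ with $p$ and sorting them by image in $X(Y)$ yields
$$P(Y)\;=\;\coprod_{f\in X(Y)}Hom_{C/X}(f,p);$$
each summand is a right $Hom_{C/X}(f,p_X)$-set, hence a right $H(Y)$-set through the group isomorphism above, so $P(Y)$ becomes a right $H(Y)$-set. For the naturality required by the definition of an action, a morphism $\phi:Y'\to Y$ of $C$ together with $f\in X(Y)$ gives a morphism of $C/X$ from $f\circ\phi$ to $f$; functoriality of the $H_X$-action over $C/X$, together with the compatibility of the isomorphisms $Hom_{C/X}(\,\cdot\,,p_X)\cong H(\,\cdot\,)$ with restriction, shows that the restriction $P(Y)\to P(Y')$ is a morphism of right $H(Y')$-sets.

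I do not expect any genuine obstacle: the statement is in essence a base-change bookkeeping lemma. The only points to keep straight are the consistency of the two ``functor of points'' layers --- over $C$ and over $C/X$ --- and the verification that the group structure of $H_X$ and the fibre products over $X$ are identified, under the canonical isomorphisms, with the group structure of $H$ and the ordinary products in $C$; once those identifications are pinned down, the action axioms transfer verbatim.
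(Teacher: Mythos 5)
Your proposal is correct and, in its pointwise layer, is essentially the paper's own argument: the paper also defines $g.f$ for $f\in P(Y)$, $g\in H(Y)$ by viewing $f$ as an element of $Hom_{C/X}(p\circ f,p)$, identifying $g$ with the element $(g,p\circ f)$ of $Hom_{C/X}(p\circ f,H_X)$, and letting the $H_X$-action act there. The morphism-level packaging via $P\times_X(H\times X)\cong P\times H$ and the explicit naturality checks are additions the paper leaves implicit, not a different route.
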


\begin{proof}
Let $Y$ be an object of $C$, for every element $f$ of $Hom_C(Y,P)$, $p\circ f$ is an
 object of $C/X$.
 Let $g$ be an element of $Hom_C(Y,H)$, $(p \circ f,g)$ is an element of $Hom_{C/X}(p\circ 
 f,H_X)$. 
 To define the action of $g$ on $f$, consider $(p\circ f).(p\circ f,g)$ defined by the right action of $Hom_{C/X}(p\circ f,H_X)$ on $Hom_{C/X}(p\circ f,p)$. It is an element of $Hom_{C/X}(p\circ f,p)$ thus, it is defined by  a morphism $h:Y\rightarrow P$. We set $g.f=h$. 
\end{proof}

We are now able to present the lifting problem that we are going to study in this paper.

\medskip

{\bf Question 1.}

 Let $G$ and $H$ be group objects of the site $(C,J)$ and  $X$ be an object of  $C$ endowed with the left action $(X,l_G^X)$ of the group object $G$. Let $p:P\rightarrow X$ be a $H_X$-torsor defined over $X$, is there an action of $G$ on $P$ which commutes with $H$ and such that $p$ is a $G$-morphism ?
 
\medskip

Remark that since $X$ is a $G$-object, $Hom_C(X,X)$ is endowed with a left action of $G(X)$. For every $g\in G(X)$, we denote $h_g = g.Id_X$. Consider the subset $Aut_G(P)'$ of $Hom_C(P,P)$ such that for every $h$ in $Aut_G(P)'$, there exists $g$ in $G(X)$ such that: 

$$
h_g\circ p =p\circ h \eqno(1)
$$
$Aut_G(P)'$ defines a presheaf on $C$ such that for every $Y$ in $C$,  $Aut_G(P)'(Y)$ is the set of automorphisms of $P(Y)$ such that for every $f\in Aut_G(P)'(Y)$, there exists $g\in Aut_G(P)'$ such that for every $x$ in $P(Y)$,  $f(x)=g\circ x$.  We suppose that the presheaf defined by $Aut_G(P)'$ is representable by a group object $Aut_G(P)$.

Since the action of $G$ on $X$ is effective, the relation $(1)$ defines a morphism of groups objects $\pi_G:Aut_G(P)\rightarrow G$. 

\medskip

We suppose that the $H_X$-torsor $p:P\rightarrow X$ satisfies the following condition:

\smallskip

$C1$ 

The morphism of groups $\pi_G$ is an epimorphism.

\medskip

 We thus have an exact sequence:

$$
1\rightarrow {\cal G}(P)\rightarrow Aut_G(P)\rightarrow G\rightarrow 1 \eqno(2)
$$

The existence of a lifting of the action of $G$ on $P$ is equivalent to the fact that the previous exact sequence splits. We are going to study this exact sequences by using the notion of gerbe that we present in the next section.

\section{Sheaves of categories and gerbes.}

We are going to present in this section the main tool used to study the lifting problem mentioned above: it is the notion of sheaf of categories which enables to construct global objects by gluing local objects.

\begin{mydef}

Let $p:F\rightarrow C$ be a functor, for every object $X$ of $C$, the {\bf fiber} $F_X$ of $X$ is the subcategory of $F$ such that $Y$ is an object of $F_X$ if and only if $p(Y)=X$ and a morphism $f:Y\rightarrow Z$ of $C$ is a morphism of $F_X$ if and only if $p(f)=Id_X$.

\end{mydef}

 Let $m:A\rightarrow B$ be a morphism of $F$, we write $X=p(A)$, $Y=p(B)$,  and $n=p(m)$.
For every object $A'$ in $F_{X}$, we denote by $Hom_n(A',B)$ the  subset of $Hom_F(A',B)$ such that for every element $g\in Hom_n(A',B)$, $p(g)=n$. Consider the map $c_m(A'):Hom_{F_X}(A',A)\rightarrow Hom_n(A',B)$ which send $h$ to $m\circ h$.

\begin{mydef} We say that $m$ is {\bf Cartesian} if and only if $c_m(A')$ is a bijection for every object $A'$ of $F_X$.

The functor $p$ is a {\bf fibred category} if and only if for every morphism $n:X\rightarrow Y$ of $C$, and for every object $B$ of $F_Y$, there exists a Cartesian morphism $m:A\rightarrow B$ of $F$ such that $p(m)=n$ and the composition of two Cartesian morphisms is a Cartesian morphism. We will say often that $A$ is the restriction of $B$ by $n$ and write $A=r(n)(B)$. 

We say that the fibred category $p:F\rightarrow C$ is {\bf fibred in groupoids} if and 
if for every object $X$ of $C$, $F_X$ is a groupoid or equivalently that the morphisms of $F_X$ are invertible.
\end{mydef}

\begin{mydef}
Let $p:E\rightarrow C$, $p':E'\rightarrow C'$ be two fibred categories and $f:C\rightarrow C'$ a functor. A functor $g:E\rightarrow E'$ is a {\bf Cartesian} functor above $f$ if
and only if the image of a Cartesian morphism by $g$ is a Cartesian morphism and the following square is commutative: 

$$
\matrix{E &{\buildrel{g}\over{\longrightarrow}}& E'\cr p\downarrow &&\downarrow p'\cr C & {\buildrel{f}\over{\longrightarrow}}& C'}
$$

We denote by $Cart_f(E,E')$ the set of Cartesian functors above $f$.
\end{mydef}

Suppose $C$ is endowed with the topology $J$, for every object $X$ of $C$ and every sieve $R$ of $J(X)$, there exists a forgetful functor $i_R:R\rightarrow C$
which sends the object $h:Y\rightarrow X$ to $X$ and the morphism $h:(Z\rightarrow X)\rightarrow (Y\rightarrow X)$ to $h$. If $R$ is the sieve $C/X$ we denote $i_R$ by $i_X$.

\begin{mydef}

Let $(C,J)$ be a site. A fibred category $p:F\rightarrow C$ is a {\bf $J$-sheaf of categories} if and only if for every object $X$ of $C$ and every element $R\in J(X)$ the restriction functor 

$$
i_R^X:Cart_{i_X}(C/X,F)\rightarrow Cart_{i_R}(R,F)
$$
is an equivalence of categories.
\end{mydef}
 
\begin{mydef}

Let $(C,J)$ be a site and $p:F\rightarrow C$ a sheaf of categories defined on $(C,J)$. We say that $p$ is {\bf locally trivial} if and only if for every object $X$ of $C$, there exists a sieve $R$ in $J(X)$ such that
for every object $f:Y\rightarrow X$ of $R$, the category $F_Y$ is a non empty connected groupoid. A sheaf of categories which is locally trivial is often called a {\bf gerbe}.

Suppose that $M$ is the final object of $C$, the gerbe is {\bf trivial} if and only if $F_M$ is not empty.
\end{mydef}

\begin{mydef}
Let $p:F\rightarrow C$ be a gerbe defined on the site $(C,J)$. Suppose  that there is a sheaf in groups $L$ defined on $(C,J)$ such that for every object $e$ of $F_X$, there exists an isomorphism $i_e:L(X)\rightarrow Aut_{Id_X}(e)$ between $L(X)$ and the group of automorphisms $Aut_{Id_X}(e)$ of $e$ above the identity of $X$  such that:

 for every morphism $f:X\rightarrow Y$, every Cartesian morphism $u:x\rightarrow y$ above $f$ and every element $g\in L(X)$, $i_y(g)\circ u= u\circ r(f)(i_y(g)) $. In particular, the elements of $L(X)$ commutes with morphisms of $F_X$. 
 
We say that $L$ is the {\bf band} of the gerbe $p:F\rightarrow C$ or equivalently that the gerbe is {\bf bounded by $L$}.

A morphism of gerbes is a morphism of fibred categories which commutes with their respective band. 

\end{mydef}

\subsection{ The classifying cocycle.}

Let  $L$ be a sheaf define on the Grothendieck site $(C,J)$, an interesting question is to classify the gerbes defined on $(C,J)$ bounded by $L$, on this purpose Giraud [6] has  associated to a gerbe a non commutative clasifying cocycle that we describe in the following lines.

Let $p:F\rightarrow C$ be a gerbe bounded by $L$ and $M$ the final object of $C$.  Consider an element $R$ of $J(M)$ generated by $(u_i:U_i\rightarrow M)_{i\in I}$. For every $i\in I$ choose an object $e_i$ of $F_{U_i}$. Let $e_i^j=r(u_i^j)(e_i)$ be a restriction of $e_i$ over $U_{ij}$. Since $F_{U_{ij}}$ is a connected groupoid, there exists an isomorphism
$h_{ij}:e_j^i\rightarrow e_i^j$ above the identity.

Let $e_i^{jk}=r(u_{ij}^k)(e_i^j)$ be a restriction of $e_i^j$ to $U_{ijk}$. Recall that $e_i^{jk}$ is defined by a Cartesian morphism $l_i^{jk}:e_i^{jk}\rightarrow e_i^j$. Since the morphism $ l_i^{jk}$ is Cartesian, $l_i^{jk}$ and $h_{ij}\circ l_j^{ik}$ are above $u_{ij}^k$, there exists a   morphism $h_{ij}^k:e_j^{ik}\rightarrow e_i^{jk}$ such that $l_i^{jk}\circ h_{ij}^k=h_{ij}\circ l_j^{ik}$. We write:

$$
c_{ijk}= h_{ki}^jh_{ij}^kh_{jk}^i
$$

Remark that $c_{ijk}$ is an automorphism of $e_k^{ij}$ thus an element of $L(X_{ijk})$. The family of morphism $(c_{ijk})_{i,j,k\in I}$ is the classifying cocycle associated to the gerbe.

The definition of the $2$-cocycle $(c_{ijk})_{i,j,k\in I}$ depends on the choice of the $e_i$ and of the  Cartesian morphisms. A different choice of these data defines another $2$-cocycle and we say that this $2$-cocycle is cohomologous to $(c_{ijk})_{i,j,k\in I}$. This induces an equivalence relation on the set of $2$-cocycles bounded by $L$.
We denote by $H^2(C,J,L)$ the set whose elements are equivalence classes for this relation. see also [4].

We can now present the classifying theorem of Giraud [6]. For this purpose, we denote by $Gerb(C,J,L)$ the gerbes defined on the site $(C,J)$ bounded by $L$  and by $IsoGerb(C,J,L)$ the set of isomorphism classes of elements of $Gerb(C,J,L)$.

\begin{theo}
 Let $(C,J)$ be a site  and $L$ a sheaf defined on $(C,J)$. The correspondence $IsoGerb(C,J,L)\rightarrow H^2(C,J,L)$
 which associates to each equivalent class of gerbe the cohomologous class of its classifying cocycles is a bijection.
 \end{theo}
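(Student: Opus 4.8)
The plan is to establish this bijection in the three standard steps: the correspondence is well defined on isomorphism classes, it is injective, and it is surjective. It is the gerbe-theoretic analogue of the identification of $H^1$ with isomorphism classes of torsors, and the main work will be on surjectivity.

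\textbf{Well-definedness.} First I would check that, for a fixed gerbe $p:F\to C$ bounded by $L$, the cohomology class of the classifying cocycle $(c_{ijk})$ does not depend on the data entering its construction: the generating family $(u_i:U_i\to M)_{i\in I}$ of the sieve $R\in J(M)$, the objects $e_i\in F_{U_i}$, the isomorphisms $h_{ij}:e_j^i\to e_i^j$, and the chosen Cartesian restrictions. Two such systems can be compared after replacing the two sieves by a common subsieve in $J(M)$; there the $e_i$ differ by isomorphisms inside the connected groupoids $F_{U_i}$ and the $h_{ij}$ differ by automorphisms, i.e.\ by elements $a_{ij}\in L(U_{ij})$, and a direct computation — using that the elements of $L$ commute with the morphisms of the fibers — shows that the new cocycle differs from the old one by the coboundary of $(a_{ij})$. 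This is exactly the equivalence relation described just before the theorem, so $F\mapsto[(c_{ijk})]$ is a well-defined element of $H^2(C,J,L)$. If $\varphi:F\to F'$ is an isomorphism of gerbes (a Cartesian functor over $\mathrm{Id}_C$ compatible with the bands), then transporting the data $(e_i,h_{ij})$ through $\varphi$ produces a classifying cocycle for $F'$ literally equal to that of $F$; since $\varphi$ commutes with $L$ this is a legitimate choice of data for $F'$, hence cohomologous to any other. Thus the correspondence descends to a map $IsoGerb(C,J,L)\to H^2(C,J,L)$.

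\textbf{Injectivity.} Let $F,F'$ be gerbes bounded by $L$ whose classifying cocycles are cohomologous. After passing to a common refining sieve $R\in J(M)$, generated by $(u_i:U_i\to M)$ and contained in trivializing sieves for both gerbes, and after modifying the local data of one gerbe by the coboundary witnessing the cohomology, I may assume that the two cocycles are literally equal, say to $(c_{ijk})$, arising from local data $(e_i,h_{ij})$ on $F$ and $(e_i',h_{ij}')$ on $F'$. Since each $F_{U_i}$ and $F'_{U_i}$ is a non-empty connected groupoid with automorphism sheaf $L|_{U_i}$ — that is, a neutral gerbe bounded by $L|_{U_i}$ with a distinguished global object — there are Cartesian equivalences $\psi_i:F|_{U_i}\to F'|_{U_i}$ over the identity with $\psi_i(e_i)=e_i'$. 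On $U_{ij}$ the restrictions of $\psi_i$ and $\psi_j$ are related by a natural isomorphism $\theta_{ij}$ built from $h_{ij}$ and $h_{ij}'$, and the equality of the two cocycles is precisely the statement that the family $(\theta_{ij})$ satisfies the coherence condition over the triple overlaps, i.e.\ is a descent datum for a Cartesian functor. Because $F'$ is a sheaf of categories this descent datum is effective, so the $\psi_i$ glue to a global Cartesian functor $\Psi:F\to F'$ over $C$; being locally an equivalence it is an equivalence, it is compatible with $L$ by construction, hence an isomorphism of gerbes. Therefore $[F]=[F']$ in $IsoGerb(C,J,L)$.

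\textbf{Surjectivity, and the main obstacle.} Given a class in $H^2(C,J,L)$, represent it by a $2$-cocycle $(c_{ijk})$, $c_{ijk}\in L(U_{ijk})$, relative to a sieve $R\in J(M)$ generated by $(u_i:U_i\to M)$. The plan is to realize it as the classifying cocycle of a gerbe obtained by gluing local neutral gerbes. Over each site $(C/U_i,J_{U_i})$ take $\mathcal{T}_i=\mathrm{TORS}(L|_{U_i})$, the gerbe of $L|_{U_i}$-torsors: its fiber over $f:Y\to U_i$ is the groupoid of $L|_Y$-torsors, it is bounded by $L|_{U_i}$, and it carries a canonical object $e_i$ (the trivial torsor) with $\mathrm{Aut}_{\mathrm{Id}}(e_i)=L$. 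On $U_{ij}$ both $\mathcal{T}_i$ and $\mathcal{T}_j$ restrict to the gerbe of $L|_{U_{ij}}$-torsors, giving a canonical equivalence between them, and the elements $c_{ijk}$ furnish the comparison $2$-isomorphisms on the triple overlaps, their cocycle identity on the quadruple overlaps being exactly the coherence condition required of this $2$-descent datum. One then invokes the effectivity of descent for sheaves of categories on $(C,J)$ — the $2$-categorical counterpart of the gluing of ordinary sheaves, which is encoded in the sheaf-of-categories axiom through the essential surjectivity of $i_R^X$ — to produce a genuine sheaf of categories $p:F\to C$ together with equivalences $F|_{U_i}\simeq\mathcal{T}_i$. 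One checks that $F$ is locally trivial (over an object of $R$ its fiber is equivalent to a non-empty connected groupoid of torsors) and bounded by $L$, and finally that its classifying cocycle, computed from the distinguished objects $e_i$ and the canonical gluing isomorphisms, recovers $(c_{ijk})$. I expect this last step to be the main obstacle: making it precise that the sheaf-of-categories axiom really does permit gluing a $2$-descent datum, and that the glued object inherits both local triviality and the band $L$. It is the gerbe-level analogue of faithfully flat descent, and is exactly where the work of Giraud on extensions of Grothendieck classifying spaces enters.
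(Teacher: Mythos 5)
The paper does not contain a proof of this theorem: it is quoted as Giraud's classification theorem with a pointer to [6], so there is no in-paper argument to measure your attempt against. Your three-step outline (well-definedness, injectivity by gluing local equivalences, surjectivity by gluing local torsor gerbes along a $2$-descent datum) is the standard Giraud/Breen route and is sound as a sketch; note also that with the paper's own definitions the first step is almost definitional, since ``cohomologous'' is \emph{defined} as the relation generated by re-choosing the data $(e_i,h_{ij},\text{Cartesian morphisms})$ inside a gerbe, and no abstract cocycle identity is ever written down, so the real content lies exactly where you put it, in injectivity and surjectivity.

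Two places deserve to be flagged as genuine work rather than assertions. First, in the injectivity step the existence of Cartesian equivalences $\psi_i:F|_{U_i}\to F'|_{U_i}$ with $\psi_i(e_i)=e_i'$ is not immediate from ``non-empty connected fibers'': it is the statement that a gerbe with a distinguished object $e_i$ is equivalent to $\mathrm{TORS}(L|_{U_i})$ via $x\mapsto \mathrm{Isom}(e_i,x)$, and proving that this Isom-presheaf is an $L$-torsor and that the functor is an equivalence already uses local triviality on a finer sieve together with the sheaf-of-categories axiom. Second, the surjectivity step rests on effectivity of $2$-descent (gluing of stacks), which you correctly identify as the main obstacle but do not carry out; in Giraud this is a substantial piece of machinery, not a formal consequence of the axiom as stated here. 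Finally, be aware that your gluing construction uses the elements $c_{ijk}\in L(U_{ijk})$ as $2$-isomorphisms between the canonical comparison equivalences of the $\mathrm{TORS}(L|_{U_i})$; this only makes sense because the paper's band axiom forces $L$ to act centrally (indeed, since $L(X)\cong \mathrm{Aut}_{Id_X}(e)$ must commute with all morphisms of the fiber, the band here is essentially abelian). For a genuinely non-abelian lien the cocycle datum must also include automorphism twists $\lambda_{ij}$, the relevant $H^2$ is that of the lien rather than of the sheaf $L$, and the construction as you wrote it would not go through; restricted to the setting the paper actually defines, your argument is the right one.
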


 {\bf Remark.}
 
 If the sheaf $L$ is commutative, then $H^2(M,L)$ is the set of cohomology classes in the classical sense and a gerbe is trivial if and only if the cohomology class of its classifying cocycle is zero.
 
 \medskip
 
 Suppose that the cohomology class of the classifying cocycle of the gerbe vanishes, a good question is to classify the global sections of the gerbe. This is achieved by the following result (see Giraud [6]):
 
 \begin{theo}
 Suppose that the classifying cocycle of the gerbe $p:F\rightarrow C$ bounded by the sheaf $L$ vanishes then the isomorphic classes of its set of global sections is in bijection with $H^1(C,J,L)$.
 \end{theo}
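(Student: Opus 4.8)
\emph{Proof plan.} The strategy is to identify the set of isomorphism classes of global sections of $p$ with the first non-abelian cohomology set $H^{1}(C,J,L)$ by comparing an arbitrary global section with a fixed one, the fixed one becoming the neutral class. Two reductions come first. Since $Id_{M}$ is a terminal object of $C/M$, evaluation at $Id_{M}$ is an equivalence $Cart_{i_{M}}(C/M,F)\simeq F_{M}$, so a global section is the same thing as an object of $F_{M}$ and isomorphic global sections are isomorphic objects of $F_{M}$. Moreover the hypothesis that the classifying cocycle vanishes means that the gluing isomorphisms $h_{ij}$ used in its construction can be modified by elements of $L$ into isomorphisms $h'_{ij}$ satisfying the strict cocycle identity; then $(e_{i},h'_{ij})$ is a descent datum which, by the sheaf-of-categories property of $F$, glues to an object $e_{0}$ of $F_{M}$. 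Fix such an $e_{0}$; it will be the base point.

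\textbf{Definition of the comparison map.} By local triviality of $p$, and refining if necessary (legitimate since $J$ is a topology), choose $R\in J(M)$ generated by a family $(u_{i}:U_{i}\to M)_{i\in I}$ such that every fibre $F_{U_{i}}$ and every fibre $F_{U_{ij}}$ is a non-empty connected groupoid. Given an object $e$ of $F_{M}$, connectedness of $F_{U_{i}}$ provides isomorphisms $\phi_{i}:r(u_{i})(e_{0})\to r(u_{i})(e)$ above $Id_{U_{i}}$. On $U_{ij}$ the automorphism $\phi_{i}^{-1}\circ\phi_{j}$ of $r(u_{ij})(e_{0})$ above the identity equals $i_{r(u_{ij})(e_{0})}(g_{ij})$ for a unique $g_{ij}\in L(U_{ij})$, and since the maps $i_{e}$ are group isomorphisms one checks $g_{ij}g_{jk}=g_{ik}$ on $U_{ijk}$, so $(g_{ij})$ is a $1$-cocycle. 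Replacing the $\phi_{i}$ by $\phi_{i}\circ i(t_{i})$ with $t_{i}\in L(U_{i})$ replaces $(g_{ij})$ by the cohomologous cocycle $(t_{i}^{-1}g_{ij}t_{j})$; refining the covering, or replacing $e$ by an isomorphic object of $F_{M}$, does not change the class either (for a change of $e_{0}$ inside its isomorphism class one also uses that the image of $i_{e_{0}}$ is central, so transporting $i_{e_{0}}$ along an isomorphism is unambiguous). This defines $\Phi([e])=[(g_{ij})]\in H^{1}(C,J,L)$, with $\Phi([e_{0}])$ the neutral class.

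\textbf{Bijectivity.} For surjectivity, given a $1$-cocycle $(g_{ij})$ on a covering refining the one above, the isomorphisms $i_{r(u_{ij})(e_{0})}(g_{ij})$ turn the restrictions $r(u_{i})(e_{0})$ into a descent datum, which glues (sheaf-of-categories property) to an object $e$ of $F_{M}$ with $\Phi([e])=[(g_{ij})]$ by construction. For injectivity, suppose $\Phi([e])=\Phi([e'])$; after a common refinement we obtain $\phi_{i}$, $\phi'_{i}$ and cocycles with $g'_{ij}=t_{i}^{-1}g_{ij}t_{j}$, and after replacing $\phi'_{i}$ by $\phi'_{i}\circ i(t_{i})^{-1}$ we may assume the two cocycles coincide. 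Then the isomorphisms $\theta_{i}=\phi_{i}\circ(\phi'_{i})^{-1}:r(u_{i})(e')\to r(u_{i})(e)$ satisfy $\theta_{i}=\theta_{j}$ on $U_{ij}$ by a one-line computation using that each $i_{e}$ is a homomorphism; by fully faithfulness of the restriction functor $i_{R}^{M}$ they glue to an isomorphism $e'\to e$ in $F_{M}$, whence $[e]=[e']$.

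\textbf{Main obstacle.} The delicate point throughout is that $L$ is in general non-abelian, so $H^{1}(C,J,L)$ is only a pointed set and every cocycle manipulation above must be done with scrupulous attention to the order of composition and to which fibre each morphism lies in; the two structural properties of the band — that every $i_{e}$ is a group isomorphism and that its image is central among the morphisms of $F_{X}$ — have to be invoked at exactly the right places, and the coherence isomorphisms identifying the iterated restrictions $r(u_{i}^{j})r(u_{i})$ with $r(u_{j}^{i})r(u_{j})$ must be carried along systematically (this is dealt with, as in Giraud [6], by fixing a cleavage or passing to an equivalent split fibred category). A shorter and less computational route is to apply the preceding theorem: vanishing of the classifying cocycle identifies $p$, as a gerbe banded by $L$, with the neutral gerbe whose fibre over an object $X$ is the groupoid of $L_{X}$-torsors over $X$; the global sections of that gerbe are precisely the $L$-torsors over $M$, whose isomorphism classes form $H^{1}(C,J,L)$ by definition.
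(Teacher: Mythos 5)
Your argument is correct in substance, but be aware that the paper itself does not prove this statement: it is quoted from Giraud [6] with no internal argument, so what you have written is not a variant of the paper's proof but a self-contained replacement for the citation. Your route is the standard one that Giraud's treatment encapsulates: (i) identify global sections with objects of the fibre $F_M$ over the final object via the equivalence $Cart_{i_M}(C/M,F)\simeq F_M$; (ii) use the vanishing of the classifying cocycle to descend a base object $e_0\in F_M$ (this step is really the triviality criterion already available from Theorem 3.1 and the remark following it, which you could simply invoke instead of re-running descent); (iii) compare an arbitrary $e\in F_M$ with $e_0$ through local isomorphisms $\phi_i$, extract the nonabelian \v{C}ech $1$-cocycle $(g_{ij})$ in $L$, show independence of choices up to the coboundary relation $g_{ij}\mapsto t_i^{-1}g_{ij}t_j$, and prove bijectivity by descent (surjectivity) and by full faithfulness of the restriction functor (injectivity); your computations at each of these points are sound, and the cleavage/coherence caveat is honestly flagged. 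Two small remarks. First, the paper never defines $H^1(C,J,L)$ explicitly, so your identification of the target with \v{C}ech classes (equivalently, with isomorphism classes of $L$-torsors, as in your shorter route) should be stated as the definition being used rather than checked against one. Second, the centrality you invoke when changing the base point is automatic in this paper's setting, since its band axiom already requires elements of $L(X)$ to commute with all morphisms of $F_X$; this also quietly simplifies several of the order-of-composition issues you worry about in the genuinely nonabelian case. Your closing alternative, identifying the trivialized gerbe with the gerbe of $L$-torsors over $M$, is exactly Giraud's point of view and is the fastest way to align your proof with the reference the paper gives.
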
 
 
{\bf Examples.}

\medskip

We are going to apply these notions to a sheaf of categories defined over the classifying topos of a group object to study the lifting problem. We start by the following definition due to Grothendieck (see Giraud [6] p. 411):

\begin{mydef}
Let $C$ be a topos and $G$ a group object of $C$, the classifying space of $G$ that we denote by $B_G$ is the topos whose objects are left $G$-objects.
\end{mydef}

Remark that a morphism $u:G_1\rightarrow G_2$ induces a morphism of topoi $B_u=(B_u^*,{B_u}_*):B_{G_1}\rightarrow B_{G_2}$. The inverse image $B_u^*$ of $B_u$ associates to a $G_2$-object $(U,l_{G_2}^U)$ the $G_1$ object $(U, l_{G_2}^U\circ u)$. The right adjoint ${B_u}_*$ of $B_u^*$ is defined by ${B_u}_*(V)=Hom_{G_1}(G_2,V)$.

Let $1\rightarrow L {\buildrel{l}\over{\longrightarrow}} M{\buildrel{m}\over{\longrightarrow}} N\longrightarrow 1$ be an exact sequence of groups defined in the topos $C$, and
let $Out(L)$ be the quotient of the group of automorphisms $Aut(L)$ of $L$ by the group of inner automorphisms of $L$.  The previous exact sequence induces a morphism $\phi:N\rightarrow Out(L)$, we deduce that $Out(L)$ is an $N$-object and $(L,\phi)$ defines a sheaf $L_N$ on the topos $B_N$. (See Giraud [6]   p. 430 and p. 431).

We are going to adapt here the result of Giraud [6] 5.3.1.

Let $E_N$ be the $N$-object of $B_N$ obtained by the action of $N$ on itself by left translations. The right translations define on $E_N$ the structure of a torsor on the final object $e_{B_N}$ of $B_N$ (see Giraud [6] p. 412) or [8] SGA4.1 p. 374 exercise 5.9.)

Consider the category $F_N^M$  such that an object of $F_N^M$ is a  $L$-torsor $p_V:e_V\rightarrow V$ over an object $V$ of $B_N$ such that the quotient of $e_V$ by $L$ is the pullback of $E_N$ by the canonical morphism $i_V:V\rightarrow e_{B_N}$.

A morphism of $F_N^M$ is defined by objects $e_{V_i}$, $i=1,2$   of $F_N^M$ over $V_i$, $f:V_1\rightarrow V_2$ a morphism of $B_N$ and $g:e_{V_1}\rightarrow e_{V_2}$ a morphism
of $L$-torsors such that $f\circ p_{V_1} =p_{V_2}\circ g$.  

Let $p_N^M:F_N^M\rightarrow B_N$ be the functor defined on objects by $p_N^M(e_V)=V$ and on morphisms by $p_N^M(g)=f$.

This result is an adaptation of the remark 6.2.11 p. 437 of Giraud [6].

\begin{theo}
The functor  $p_N^M:F_N^M\rightarrow B_N$  is a gerbe bounded by $L_N$. This gerbe is trivial if and only if the extension $1\rightarrow L\rightarrow M\rightarrow N\rightarrow 1$ splits. In this case, the set of isomorphic classes of the splittings of this extension is in bijection with $H^1(B_N,L_N)$.
\end{theo}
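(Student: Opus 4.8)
The plan is to verify in turn that $p_N^M$ is a fibred category in groupoids, then a $J$-sheaf of categories which is locally trivial and bounded by $L_N$, and finally to translate the words ``trivial'' and ``$H^1$'' into statements about the extension, reusing the two theorems of Giraud quoted above (the classification of gerbes bounded by $L$ by $H^2$, and the description of global sections of a neutral gerbe by $H^1$). First I would produce the Cartesian morphisms: given $f:V_1\to V_2$ in $B_N$ and an object $e_{V_2}$ over $V_2$, one pulls back the $L$-torsor $e_{V_2}$ along $f$; since $i_{V_1}=i_{V_2}\circ f$ and the quotient by $L$ commutes with base change, the pullback still satisfies $e_{V_1}/L\cong i_{V_1}^{*}E_N$, the resulting square is Cartesian, and Cartesian morphisms compose because pullbacks do. A morphism of $F_N^M$ above $Id_V$ is a morphism of $L$-torsors over $V$; locally on $V$ it has the form $x\mapsto x\cdot\lambda$, hence is invertible, so each fibre is a groupoid and $p_N^M$ is fibred in groupoids.

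Next I would check that $p_N^M$ is a $J$-sheaf of categories: this amounts to descent for $L$-torsors, which always holds, together with the remark that the condition ``$e_V/L$ is the pullback of $E_N$'' is local on $V$ and stable under restriction, so descent data of objects of $F_N^M$ glue to objects of $F_N^M$. For local triviality, fix $V$ and cover it by $\tilde V:=V\times_{e_{B_N}}E_N\to V$, an epimorphism because $E_N\to e_{B_N}$ is a torsor, hence epimorphic. The map $\tilde V\to e_{B_N}$ factors through $E_N$, over which $E_N$ becomes the trivial $N$-torsor; thus $i_{\tilde V}^{*}E_N$ is trivial and $\tilde V\times M$, with its right $L$-action and the identification $\tilde V\times(M/L)\cong i_{\tilde V}^{*}E_N$, is an object of the fibre over $\tilde V$, so that fibre is non-empty. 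Passing to a further refinement on which the $L$-torsors $e_W\to e_W/L$ admit sections, any two objects over $W$ become isomorphic $L$-torsors over $i_W^{*}E_N$, so the fibres over the refined sieve are connected; hence $p_N^M$ is a gerbe. For the band, observe that for $e\in (F_N^M)_V$ an automorphism of $e$ above $Id_V$ is exactly an automorphism of the $L$-torsor $e_V$, which for a torsor is a section over $V$ of the inner form of $L$ attached to $e_V$; since $e_V/L\cong i_V^{*}E_N$ and conjugation in $M$ induces precisely the outer action $\phi:N\to Out(L)$ defining $L_N$, this inner form is canonically $i_V^{*}L_N$, and the resulting isomorphisms $L_N(V)\cong Aut_{Id_V}(e)$ are compatible with Cartesian morphisms and restriction. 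Thus $p_N^M$ is bounded by $L_N$.

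For the equivalence with splitting, recall that by the definition of a trivial gerbe, $p_N^M$ is trivial iff $(F_N^M)_{e_{B_N}}$ is non-empty, i.e. iff there is an $L$-torsor $\tilde E\to e_{B_N}$ in $B_N$ with $\tilde E/L\cong E_N$. If $s:N\to M$ splits the extension, then $M$, viewed as an object of $B_N$ via $n\cdot m:=s(n)m$ and as a right $L$-torsor over $e_{B_N}$, satisfies $M/L\cong N=E_N$, the projection $M\to N$ being $N$-equivariant because $m\circ s=Id$; so $M\in (F_N^M)_{e_{B_N}}$ and the gerbe is trivial. Conversely, restricting such an $\tilde E$ along the universal cover identifies its underlying object, modulo $L$, with $N$ acting on itself by translation, and unwinding the $N$-action in terms of the band $L_N$ exhibits it as a genuine homomorphism $N\to Aut(L)$ lifting $\phi$ together with an $L$-valued cocycle, which is exactly the data of a splitting $s:N\to M$. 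Two objects of $(F_N^M)_{e_{B_N}}$ are isomorphic iff the corresponding splittings are conjugate, and the isomorphism classes of objects over the final object are by definition the isomorphism classes of global sections of the gerbe; hence the last assertion — that when the extension splits the isomorphism classes of splittings are in bijection with $H^1(B_N,L_N)$ — is the theorem on global sections of a neutral gerbe, quoted above, applied to $p_N^M$ with band $L_N$.

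\textbf{Main obstacle.} The descent argument, the bookkeeping with pullbacks, and the implication ``split $\Rightarrow$ trivial'' are routine. The real work is the converse: extracting from an object of $(F_N^M)_{e_{B_N}}$ an honest group homomorphism $N\to Aut(L)$ lifting $\phi$ — not merely a set-theoretic section of $M\to N$ — and checking that the accompanying $L$-cochain is a cocycle whose conjugacy class matches the $H^1$-class produced by Giraud's theorem. This is exactly the point where one must use that $\tilde E/L$ is the \emph{universal} $N$-torsor $E_N$ and not an arbitrary one, and it is the heart of the adaptation of Giraud's 6.2.11 and 5.3.1.
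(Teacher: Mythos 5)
Your route is the same as the paper's: pullback of torsors gives the Cartesian morphisms, descent for torsors gives the sheaf-of-categories property (the paper does this by the explicit Chasles-relation gluing of the $e_i$), triviality is tested on the fibre over $e_{B_N}$, and the final claim is reduced to the quoted theorem on global sections of a neutral gerbe bounded by $L_N$. You in fact supply details the paper leaves implicit (local non-emptiness and connectedness over the cover $V\times E_N\rightarrow V$, and the identification of the band), and the direction ``split $\Rightarrow$ trivial'' via the object $M$ with $N$ acting through the section is exactly the paper's.

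The genuine gap is the step you yourself flag and then only gesture at: ``trivial $\Rightarrow$ the extension splits.'' Your sketch via ``restricting along the universal cover'' is not a legitimate move -- there is no universal cover in a general topos; the only objects available are the unit section $e\rightarrow N$ and the cover $E_N\rightarrow e_{B_N}$. If $\tilde E$ is a global object with $\tilde E/L\cong E_N$, pulling back along the unit section yields an $L$-torsor $F$ over the final object of $C$, and the left $N$-action only produces a morphism $N\times F\rightarrow M$ with $n\cdot q=q\cdot m(n,q)$ and $m(n,ql)=l^{-1}m(n,q)l$; one extracts an honest homomorphism $N\rightarrow M$ lifting the identity of $N$ only after trivializing, i.e. twisting by, the torsor $F$. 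This is exactly the point the paper compresses into the sentence that an object of $(F_N^M)_{e_{B_N}}$ is, ``up to the torsion by an $L$-torsor,'' the group $M$ endowed with an action of $N$, that action being the splitting. To complete your argument you must address this twist explicitly (show the relevant torsor can be trivialized, or explain why the twist does not change the extension being split), rather than appeal to a covering-space argument that has no meaning in this setting; the cocycle bookkeeping you postpone in your last paragraph is subordinate to this point.
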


\begin{proof}

Let $f:V\rightarrow V'$ be a morphism of $B_N$ and $g:e_{V'}\rightarrow V'$ an object of the fibre ${F_N^M}_{V'}$. We are going to show that the pullback $h:e_V\rightarrow e_{V'}$ of $g$ by $f$ is a Cartesian morphism. This is equivalent to saying that for every object $e'_V$ of ${F_N^M}_V$ the map $c_h:Hom_{Id_V}(e'_V,e_V)\rightarrow Hom_f(e'_V,e_{V'})$ which sends $u$ to $h\circ u$ is bijective. This last assertion is a consequence of the universal property of  pullbacks: let $l:e_V\rightarrow V$ be the torsor projection map and $u,v$ be elements of $Hom_V(e'_V,e_V)$ such that $h\circ u = h\circ v$. We have $g\circ (h\circ u) =(g\circ h)\circ v =(f\circ l)\circ v$. The universal property of the pullback implies the existence of a unique $w:e'_V\rightarrow e_V$ such that $h\circ w =h\circ v$ and $l\circ w=l\circ v$. This implies that $w=v$ and by exchanging the role of $u$ and $w$, we obtain that $w=u$, thus $c_h$ is injective.

We show now that $c_h$ is surjective. Let $l':e'_V\rightarrow V$ be an object of $F_N^M$ and $u:e'_V\rightarrow e_{V'}$ such that $g\circ u=f\circ l'$. The universal property of the fibre product implies the existence of a morphism $v:e'_V\rightarrow e_V$ such that
$h\circ v=u$. This implies that $p_N^M$ is a fibred category.

Let $f:U\rightarrow V$  be an object of  $B_N/V$, we have to show that for every sieve   $R$  of $J(f)$, the restriction functor  $i_{R}^{f}:Cart_{i_{f}}(B_N/V,p_N^M)\rightarrow Cart_{i_{R}}(R,p_N^M)$ is an equivalence of categories, a fact which is equivalent to saying that this functor is essentially surjective and fully faithful.

Firstly, we show that $i_{R}^{f}$ is essentially surjective.  Consider an object $e$ of $Cart_{i_{R}}(R,p_N^M)$ and let $(f_i:V_i\rightarrow V)_{i\in I}$ be a family of objects of $R$ which generate $R$. Denote $e(f_i)$ by $p_i:e_i\rightarrow V_i$.

  Since $p_N^M$ is a fibred category, there exists a Cartesian morphism
$l_i^j:e_i^j\rightarrow e_i$ above the morphism $f_{ij}:V_i\times_V V_j\rightarrow V_i$. Let $e_{ij}=e(f_i\circ f_{ij})$. (Remark that $f_i\circ f_{ij}$ is an object of $R$). Since $l_i^j$ is a Cartesian map, we deduce the existence of a morphism $m_i^j:e_{ij}\rightarrow e_i^j$ such that $l_i^j\circ m_i^j =e(f_{ij})$; ($f_{ij}$ is a morphism of $R$). Remark that $m_i^j$ is invertible since $F_N^M(f_i\circ f_{ij})$ is a groupoid. Thus we can write $n_{ij}=m_i^j\circ {m_j^i}^{-1}:e_j^i\rightarrow e_i^j$.

Let $n_{ij}^k$ be the pullback of $n_{ij}$ above $V_i\times_VV_j\times_VV_k$,
We have the Chasles relation $n_{ik}^j=n_{ij}^kn_{jk}^i$. We deduce the existence of a torsor $p_e:e\rightarrow V'$ obtained by gluing the family $(e_i)_{i\in I}$. Moreover, there exist canonical maps $h_i:V_i\rightarrow V'$ such that $p_i$ is the pullback of $p_e$ by $h_i$. This implies that $i_{R_b}^{f_b}$ is essentially surjective.

Now we show that $i_{R}^{f}$ is fully faithful.

Let $e,e'$ be elements of $Cart_{i_{f}}(B_N^M/V)$ and $g,g':e\rightarrow e'$  morphisms. Let   $e_i$ (resp. $e'_i$) be the restriction of $e$  to $f_i$ (resp. the restriction of $e'$ to $f_i$). Suppose that $i_{R_b}^{f^b}(g)=i_{R_b}^{f_b}(g')$, then the restriction of $g$ and $g'$ to $e_i$ are equal. This implies that $g=g'$. Let $h:i_{R}^{f}(e)\rightarrow i_{R}^{f}(e')$ be a morphism, $h$ is defined by morphisms $h_i:e_i\rightarrow e_i'$ such that
the restriction $h_i^j$ of $h_i$ to the pullback $e_i^j$ of $e_i$ to $V_i\times_V V_j$ coincide with the restriction $h_j^i$ of $h_j$ to $e_j^i=e_i^j$. This implies that there exists a morphism $g:e\rightarrow e'$ whose restriction to $e_i$ is $h_i$. Thus $i_{R}^{f}$ is fully faithful and henceforth an equivalence of categories.

The gerbe $p_N^M$ is trivial if and only if it has a global section. This is equivalent to saying that the fibre of the final object $e_{B_N}$ of $B_N$ is not empty. An object of ${F_N^M}_{e_{B_N}}$ is up to the torsion by an $L$-torsor a group isomorphic to $M$ endowed with an action of $N$. This action is a splitting   of the exact sequence $1\rightarrow L\rightarrow M\rightarrow N\rightarrow 1$.

\end{proof}

We apply this result to the lifting problem. We have seen that if $p:P\rightarrow X$ is a $G$-torsor which satisfies the condition $C1$, there exists an exact sequence:
$$
1\longrightarrow {\cal G}(P){\buildrel{l(p)}\over{\longrightarrow}} Aut_G(P){\buildrel{aut(p)}\over{\longrightarrow}} G\rightarrow 1
$$
The previous theorem implies that $p_G^{Aut_G(P)}:F_G^{Aut_G(P)}\rightarrow B_{G}$ is a gerbe bounded by $L_{{\cal G}(P)}$. We have:

\begin{theo}
The action of $G$ on $X$ can be lifted to $P$ if and only if the classifying cocycle $c_p$  of the gerbe $p_G^{Aut_G(P)}$ is trivial. In this case, the set of isomorphism classes of the liftings of $G$ is in bijection with $H^1(B_{G},L_{{\cal G}(P)})$.
\end{theo}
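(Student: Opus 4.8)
The plan is to obtain the assertion as a direct corollary of the preceding theorem on the gerbe $p_N^M:F_N^M\to B_N$, combined with Giraud's two classification results (Theorems~3.1 and~3.2), once the translation between liftings of the action and splittings of the extension is made explicit. First I would apply the preceding theorem to the exact sequence furnished by condition $C1$, taking $L={\cal G}(P)$, $M=Aut_G(P)$, $N=G$ in the extension $(2)$; this is legitimate precisely because $C1$ guarantees that $\pi_G$ is an epimorphism, so that $(2)$ really is a short exact sequence of group objects of $C$. The preceding theorem then says that $p_G^{Aut_G(P)}:F_G^{Aut_G(P)}\to B_G$ is a gerbe bounded by $L_{{\cal G}(P)}$ and that it is trivial --- i.e. admits a global section over the final object $e_{B_G}$ of $B_G$ --- if and only if the extension $(2)$ splits. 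Invoking Giraud's classification (Theorem~3.1), triviality of this gerbe is equivalent to the classifying $2$-cocycle $c_p\in H^2(B_G,L_{{\cal G}(P)})$ being cohomologous to the neutral class, that is, trivial; this is the passage from the geometric obstruction to the cohomological one.

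The substantive step, which I expect to be the main obstacle, is the dictionary between splittings of $(2)$ and liftings of the $G$-action on $X$ to $P$, since everything else reduces to quoting already-established theorems. A splitting of $(2)$ is a morphism of group objects $s:G\to Aut_G(P)$ with $\pi_G\circ s=Id_G$. Unwinding the representability hypothesis placed on $Aut_G(P)'$, for every object $Y$ of $C$ and every $g\in G(Y)$ the element $s(g)\in Aut_G(P)(Y)$ is an automorphism of $P(Y)$ which commutes with the right $H(Y)$-action and which satisfies relation $(1)$, namely $h_g\circ p=p\circ s(g)$. Functoriality in $Y$ and multiplicativity of $s$ then make $g\mapsto s(g)$ a left action of $G$ on $P$; relation $(1)$ says exactly that $p$ is a $G$-morphism, and commutation with $H$ is built into the definition of $Aut_G(P)'$. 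Conversely, a lifting assigns to each $g\in G(Y)$ an $H(Y)$-equivariant automorphism of $P(Y)$ lying over $h_g$, hence an element of $Aut_G(P)(Y)$ mapping to $g$ under $\pi_G$; compatibility of the action with composition and with the restriction maps of $C$ turns this assignment into a section of $\pi_G$ in the category of group objects. Thus liftings are the same data as splittings of $(2)$, which together with the first paragraph establishes the equivalence claimed in the statement.

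For the counting assertion, when $c_p$ is trivial Theorem~3.2 gives a bijection between the isomorphism classes of global sections of the gerbe $p_G^{Aut_G(P)}$ and $H^1(B_G,L_{{\cal G}(P)})$, while the preceding theorem identifies the isomorphism classes of global sections of $F_N^M$ with the isomorphism classes of splittings of $(2)$. It then remains only to verify that the dictionary of the second paragraph is compatible with the relevant equivalence relations: two liftings are to be declared isomorphic exactly when the corresponding sections $s,s'$ of $\pi_G$ are conjugate by an element of ${\cal G}(P)$, i.e. by an $H$-equivariant automorphism of $P$ over $X$. This is a routine unwinding of definitions once the correspondence is in place. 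Composing the three bijections (liftings up to isomorphism $\leftrightarrow$ splittings of $(2)$ up to ${\cal G}(P)$-conjugacy $\leftrightarrow$ global sections of the gerbe up to isomorphism $\leftrightarrow$ $H^1(B_G,L_{{\cal G}(P)})$) yields the stated bijection between isomorphism classes of liftings and $H^1(B_G,L_{{\cal G}(P)})$.
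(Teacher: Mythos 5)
Your proposal is correct and follows essentially the same route as the paper: the paper states this theorem without a written proof, treating it as immediate from the preceding theorem on $p_N^M:F_N^M\rightarrow B_N$ applied to the extension $(2)$, together with Giraud's classification results and the assertion (made already in Section 2) that liftings of the $G$-action correspond to splittings of $(2)$. Your only addition is that you spell out the lifting--splitting dictionary and the compatibility of the isomorphism relations, which the paper leaves implicit.
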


We study now the lifting problem of a $H$-torsor $p:P\rightarrow X$ where $X$ is the right homogeneous space $G/L$. We have the following proposition:

\begin{prop}
Let $p:P\rightarrow G/L$ be an $H$-principal torsor  which satisfies the condition $C1$ and such that ${\cal G}(P)$ is a central subgroup of $Aut_G(P)$ and $H$ is commutative, then ${\cal G}(P)$ is isomorphic to $H$.
\end{prop}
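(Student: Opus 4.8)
\textit{Proof plan.} I would produce a canonical morphism $\iota:H\rightarrow{\cal G}(P)$ of group objects of $C$ and prove it is an isomorphism: commutativity of $H$ is responsible for $\iota$ being a well-defined monomorphism, while transitivity of the $G$-action on $X=G/L$, together with $C1$ and the centrality of ${\cal G}(P)$ in $Aut_G(P)$, is responsible for surjectivity.

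\emph{Construction of $\iota$.} For an object $Y$ of $C$ and a section $h\in H(Y)$, let $R_h$ denote right translation by $h$ on $P$, obtained from the right $H$-action of the torsor. Since $p\circ R_h=p$ and the action of $G$ on $X$ is effective, $R_h$ lies in $\ker\pi_G={\cal G}(P)$; and $R_h$ commutes with the right $H$-action precisely \emph{because $H$ is commutative}. The assignment $h\mapsto R_h$ is natural in $Y$ and, again by commutativity of $H$, a group homomorphism, so it defines a morphism of group objects $\iota:H\rightarrow{\cal G}(P)$. It is a monomorphism since, $p$ being an $H$-torsor, $P\times_X H\rightarrow P\times_X P$ is an isomorphism, i.e. the right action of $H$ on $P$ is free.

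\emph{Surjectivity of $\iota$.} Let $x_0:1_C\rightarrow X=G/L$ be the image of the unit of $G$ under the quotient morphism, and $P_{x_0}=P\times_X 1_C$ the fibre of $p$ over $x_0$, an $H$-torsor over the final object. Take $\phi\in{\cal G}(P)(Y)$, a $Y$-family of $H$-equivariant automorphisms of $P$ over the identity of $X$. Its restriction to $P_{x_0}$ is an $H$-equivariant automorphism of an $H$-torsor; because $H$ is commutative, such an automorphism is right translation $R_h$ by a uniquely determined $h\in H(Y)$ (this is immediate locally, where $P_{x_0}$ is trivial, and the local sections $h$ glue since $H$ is commutative). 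Replacing $\phi$ by $\phi\circ\iota(h)^{-1}$ we may assume $\phi$ restricts to the identity of $P_{x_0}$, and it suffices to show $\phi=Id_P$. Consider the evaluation morphism $ev:Aut_G(P)\times P_{x_0}\rightarrow P$, $(\alpha,e')\mapsto\alpha(e')$. On one hand, ${\cal G}(P)$ being \emph{central} in $Aut_G(P)$ and $\phi$ being the identity of $P_{x_0}$ give $\phi(\alpha(e'))=\alpha(\phi(e'))=\alpha(e')$, i.e. $\phi\circ ev=ev$. On the other hand $ev$ is an epimorphism: composed with $p$ it factors as $Aut_G(P)\times P_{x_0}\rightarrow Aut_G(P){\buildrel{\pi_G}\over{\longrightarrow}} G\rightarrow X$, an epimorphism because $\pi_G$ is one ($C1$) and $G\rightarrow G/L=X$ is one; and the image of $ev$ is an $H$-stable subobject of the $H$-torsor $P$ mapping onto $X$, hence all of $P$. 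Therefore $\phi=Id_P$, so $\phi=\iota(h)$; thus $\iota(Y)$ is surjective, and being also injective it is bijective for every $Y$, so $\iota$ is an isomorphism and ${\cal G}(P)\cong H$.

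The step I expect to be the crux is the epimorphy of $ev$, that is, the fact that $P$ is generated by the $Aut_G(P)$-orbit of the fibre $P_{x_0}$: this is exactly where the hypotheses $X=G/L$ (so the orbit of $x_0$ is all of $X$) and $C1$ (so the $G$-action lifts into $Aut_G(P)$) must be combined, and it has to be argued at the level of subobjects of the torsor $P$ rather than pointwise. The identification of the gauge automorphisms of $P_{x_0}$ with sections of $H$ and the centrality computation are routine once commutativity of $H$ is invoked.
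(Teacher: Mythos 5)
Your proof is correct and rests on the same two ingredients as the paper's --- right $H$-translations (which need commutativity of $H$ to be gauge transformations) and the combination of centrality with the transitivity coming from $C1$ and the homogeneity of $G/L$ --- but it packages them in the opposite direction and at the sheaf level. The paper fixes $Y$ and a point $z\in P(Y)$ and defines $f:{\cal G}(P)(Y)\rightarrow H(Y)$ by $g(z)=za_g$; surjectivity comes from the translations $z\mapsto zh$, and injectivity from centrality together with the assertion that $Aut_G(P)(Y)$ acts transitively on $P(Y)$ because $\pi_G$ is an epimorphism. You instead build $\iota:H\rightarrow{\cal G}(P)$ by translations, get injectivity from freeness of the torsor action, and get surjectivity by identifying the restriction of a gauge transformation to the fibre $P_{x_0}$ with an element of $H(Y)$ and then showing that a central gauge transformation which is the identity on $P_{x_0}$ is the identity, via the epimorphy of $ev:Aut_G(P)\times P_{x_0}\rightarrow P$. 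That epimorphy is exactly the sheaf-theoretic surrogate for the paper's transitivity claim, and it is the more robust formulation: an epimorphism of sheaves is only locally surjective on sections, and $P(Y)$ may even be empty, so the paper's pointwise argument (choice of $z$, transitivity on $Y$-points) only works literally after localizing, which your argument avoids; the price is the extra routine step identifying the gauge automorphisms of the torsor $P_{x_0}$ over the final object with $H(Y)$, which in the paper is hidden in the choice of $z$. One caveat common to both arguments: they use that sections of $Aut_G(P)$ (hence of ${\cal G}(P)$) commute with the $H$-action, which the paper's definition of $Aut_G(P)$ does not state explicitly but clearly intends and itself uses in the computation $g(za_{g'})=(gz)a_{g'}$.
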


\begin{proof}
Let $Y$ be an object of $C$, and $z$ an element of $P(Y)$. Consider the morphism $f:{\cal G}(P)(Y)\rightarrow H(Y)$ defined by $f(g) =a_g$ where $g(z)=za_g$. We are going to show that $f$ is an isomorphism. We have $(gg')(z)=za_{gg'} = g(za_{g'}) =(gz)a_{g'}=za_ga_{g'}$.
This implies that $f$ is a morphism of groups. Suppose that $f(g)=1_H(Y)$. Remark that $Aut_G(P)(Y)$ acts transitively on $P(Y)$ since $aut(P)$ is an epimorphism. Let $z'$ be an element of $P(Y)$, we can write $z'=g'(z), g'\in Aut_G(P)(Y)$. We have $g(z') = g(g'(z))=g'(g(z))=g'(z)=z'$. This implies that $f$ is injective.  Let $h$ be any element of $H(Y)$. We define the isomorphism $g_h$ of ${\cal G}(P)(Y)$ by $g_h(z)=zh$.
\end{proof}

The previous proposition implies that if ${\cal G}(P)$ is a central subgroup of $Aut_G(P)$ and $H$ is commutative, we have an exact sequence:

$$
1\rightarrow H\rightarrow Aut_G(P)\rightarrow G\rightarrow 1.
$$
We have the following result: (Compare to [19] proposition 1.1).

\begin{prop}
Suppose that the $(G,H)$-bundle $p:P\rightarrow G/L$ has a lifting. Then $P$ is a right homogeneous space of $G\times H$. In particular if $L$ is trivial, then $P$ is isomorphic to $P\times H$.
\end{prop}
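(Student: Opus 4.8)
The plan is to realise $P$ directly as a quotient of $G\times H$ by a subgroup isomorphic to $L$, working only from the definitions of torsor and of homogeneous space; the hypotheses on ${\cal G}(P)$ and on $H$ used just before the statement are not needed for this. First I would combine the two actions on $P$ into a single one. The lifting gives a left action of $G$ on $P$ commuting with the right action of $H$ and making $p$ a $G$-morphism; converting the left $G$-action into a right action as recalled earlier, one gets a right action of $G\times H$ on $P$ in which a pair $(g,h)$ sends a point $z$ to $(g^{-1}\cdot z)\,h$. The commutativity of the $G$- and $H$-actions is exactly what makes this a right action of the product.

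Next I would prove transitivity and compute the stabiliser of a well-chosen point. Let $\bar e$ be the image in $G/L$ of the identity section of $G$; by definition of the homogeneous space, $G$ acts transitively on $G/L$ with stabiliser exactly $L$ at $\bar e$. Given points $z,z'$ of $P$ over an object $Y$, choose $g\in G(Y)$ with $g\cdot p(z)=p(z')$; since $p$ is $G$-equivariant, $p(g\cdot z)=p(z')$, so $g\cdot z$ and $z'$ lie over the same point of $G/L$, and since $p$ is an $H$-torsor (the map $u$ of the torsor definition is an isomorphism) there is a unique $h\in H(Y)$ with $(g\cdot z)\,h=z'$; thus $z'$ lies in the $(G\times H)$-orbit of $z$, and the action is transitive. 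Now pick a point $z_0$ of the fibre $P_{\bar e}=p^{-1}(\bar e)$ and let $K\subseteq G\times H$ be its stabiliser. If $(g,h)$ fixes $z_0$ then $\bar e=p(z_0)=p\bigl((g^{-1}\cdot z_0)h\bigr)=g^{-1}\cdot\bar e$, so $g\in L$; and the relation $(g^{-1}\cdot z_0)h=z_0$ together with $P_{\bar e}$ being an $H$-torsor forces $h=\chi(g)$, where $\chi:L\to H$ is defined by $l\cdot z_0=z_0\,\chi(l)$ — a homomorphism because the $L$- and $H$-actions commute. Hence the projection $K\to G$ is an isomorphism of $K$ onto $L$.

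To conclude, consider the orbit map $G\times H\to P$, $(g,h)\mapsto z_0\cdot(g,h)$. By transitivity, applied to the restriction of $z_0$ and an arbitrary point of $P(Y)$, this map is surjective on $Y$-points for every $Y$, hence an epimorphism; and two pairs have the same image precisely when they differ by an element of $K$. It therefore induces an objectwise bijection, so an isomorphism, $(G\times H)/K\cong P$ in $C$ (using that the quotient $(G\times H)/K$ is among the representable quotients we have assumed to exist). Since $K\cong L$, this exhibits $P$ as a right homogeneous space of $G\times H$; and when $L$ is trivial, $K$ is trivial, so that $G/L=G$ and the orbit map itself is an isomorphism $G\times H\cong P$.

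The delicate point — which I expect to be the real content — is the choice of the base point $z_0$ in the fibre $P_{\bar e}$: this fibre is an $H$-torsor over the final object, and in an arbitrary topos such a torsor need not be trivial, in which case $P$ is only a twisted form of $(G\times H)/K$. In the topological, differentiable and algebraic settings that motivate the paper the restriction of a principal bundle to a point is trivial, so $z_0$ always exists; in the general statement one should either add this as a hypothesis or pass to a covering sieve of the final object before identifying $P$ locally. The remaining steps are routine: honest (not merely local) transitivity of $G$ on $G/L$ under the paper's presheaf-quotient convention, the fact that $\chi$ is a group homomorphism, and the identification of the fibres of the orbit map with cosets of $K$.
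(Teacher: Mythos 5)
Your proof follows essentially the same route as the paper's: combine the lifted $G$-action and the right $H$-action into a single transitive action of $G\times H$ on $P$, identify the stabilizer of a point above the identity coset as the graph of a homomorphism $L\rightarrow H$ (injectivity of the projection to $G$ coming from freeness of the $H$-action), and conclude $P\cong (G\times H)/K$ with $K$ trivial when $L$ is. The caveat you flag about choosing the base point $z_0$ (i.e.\ a global point of the $H$-torsor over $\bar e$, without which one only gets a twisted form or an objectwise statement) is a genuine subtlety that the paper's own proof elides, since it simply picks $z\in P(Y)$ and asserts transitivity without addressing how the stabilizers glue into a single subgroup object.
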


\begin{proof}
let $Y$ be an object of $C$, the group $G(Y)\times H(Y)$ acts on $P(Y)$ by $(g,h).z=gzh^{-1}$. This action is transitive. This implies the existence of a subgroup object $L'$ of $G\times H$  such that $P$ is isomorphic to $(G\times H)/L'$. We can identify $L'(Y)$ with the stabilizer of $z$. Let $h$ be an element of $L'(Y)$, write $h=(h_1,h_2)$ where $h_1\in G(Y)$ and $h_2\in H(Y)$. Consider the morphism $f:L'(Y)\rightarrow G(Y)$ defined by $f(h)=h_1$; $f$ is injective since the action of $H$ on $P$ is free. Remark that the image of $f$ is the stabilizer of $p(Y)(z)$ that we can suppose without restricting the generality to be $L(Y)$. We deduce the existence of a morphism $g:L(Y)\rightarrow H(Y)$ such that $L'(Y)=\{(h,g(h); h\in L(Y)\}$. In particular, if $L$ is trivial, $L'$ is also trivial and $P=G\times H$. 
\end{proof}

\begin{prop}
Suppose that the $(G,H)$-bundle $p:P\rightarrow G/L$ has a lifting and there exists a representation $\phi:L\rightarrow H$ such that $P(Y)$ is the quotient of $G(Y)\times H(Y)$ by $\{(l,\phi(l)), l\in L(Y)\}$  then the group $Aut_G^H(P)$ of $(G,H)$-automorphisms is   the commutator  of $\phi(L)$ in $H$.
\end{prop}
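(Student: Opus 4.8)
The plan is to compute $Aut_G^H(P)(Y)$ for each object $Y$ of $C$ and then check that the answer is functorial in $Y$. Using the hypothesis, identify $P(Y)$ with the associated bundle $(G(Y)\times H(Y))/L(Y)$, where $L(Y)$ acts on the right through $\phi$; write $[g,h]$ for the class of $(g,h)$, so that the left $G$-action is $a\cdot[g,h]=[ag,h]$, the right $H$-action is $[g,h]\cdot k=[g,hk]$, these two actions commute, and the projection $p$ sends $[g,h]$ to the coset $gL$. The key structural fact is that $P(Y)$ is a single orbit: $[g,h]=g\cdot[e,e]\cdot h$. Hence, writing $z_0=[e,e]$, any $(G,H)$-automorphism $F$ of $P(Y)$ lying over $Id_{G/L}$ is determined by $F(z_0)$ via $F([g,h])=g\cdot F(z_0)\cdot h$; and since $F$ lies over $Id_{G/L}$ we have $p(F(z_0))=p(z_0)=eL$, so $F(z_0)=[l_0,h_0]$ with $l_0\in L(Y)$, which after absorbing $l_0$ into the quotient can be rewritten as $F(z_0)=[e,h_1]$ for a uniquely determined $h_1\in H(Y)$. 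Thus every such $F$ has the form $F_{h_1}:[g,h]\mapsto[g,h_1h]$.

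The first real step is to identify for which $h_1\in H(Y)$ the prescription $F_{h_1}$ is consistent, equivalently, to read off the constraint imposed by the relation defining the quotient $P(Y)$. Applying $F_{h_1}$ to the two representatives $(g,h)$ and $(g,h)\cdot l$ of the same class and comparing them (again using that relation) reduces precisely to $h_1\,\phi(l)=\phi(l)\,h_1$ for every $l\in L(Y)$; that is, $h_1$ must lie in the centralizer $Z_{H(Y)}(\phi(L(Y)))$ of $\phi(L(Y))$ in $H(Y)$ --- the subgroup the statement calls the commutant of $\phi(L)$ in $H$. Conversely, for any $h_1$ in this centralizer one checks directly that $F_{h_1}$ is a well-defined bijection of $P(Y)$ (with inverse $F_{h_1^{-1}}$), that it commutes with the left $G$-action (it does not touch the first coordinate) and with the right $H$-action, and that $p\circ F_{h_1}=p$; hence $F_{h_1}\in Aut_G^H(P)(Y)$.

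It then remains to package this bijection $h_1\leftrightarrow F_{h_1}$ as an isomorphism of group objects. The map $h_1\mapsto F_{h_1}$ is a homomorphism, since $F_{h_1}\circ F_{h_2}=F_{h_1h_2}$; it is injective because the right $H$-action on the torsor $P$ is free, so $F_{h_1}=Id$ forces $[e,h_1]=[e,e]$ and hence $h_1=1$; and it is surjective by the base-point argument of the first paragraph. Finally, the canonical maps $P(Z)\rightarrow P(Y)$ attached to a morphism $Y\rightarrow Z$ intertwine the $F_{h_1}$ with the $F_{h_1}$ formed from the restricted elements, so the bijections $Z_{H(Y)}(\phi(L(Y)))\rightarrow Aut_G^H(P)(Y)$ assemble into an isomorphism of presheaves of groups, hence of group objects, $Z_H(\phi(L))\cong Aut_G^H(P)$.

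I expect the only delicate points to be conceptual rather than computational: (i) making precise what a $(G,H)$-automorphism of $p:P\rightarrow G/L$ is --- here it must mean an automorphism of $P$ commuting with both actions and lying over $Id_{G/L}$, and it is exactly this ``over the identity'' requirement that makes the answer the centralizer of $\phi(L)$ rather than something involving its normalizer in $H$ or in $G\times H$; and (ii) the bookkeeping of left/right and inverse conventions in the quotient $(G\times H)/\{(l,\phi(l))\}$, which may flip the map $h_1\mapsto F_{h_1}$ into an anti-homomorphism and thus only changes the identification by the inversion $h_1\mapsto h_1^{-1}$, leaving the statement $Aut_G^H(P)\cong Z_H(\phi(L))$ intact. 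Everything else is the routine verification that $F_{h_1}$ is a well-defined $(G,H)$-morphism and that $h_1\mapsto F_{h_1}$ respects the group laws and the restriction maps.
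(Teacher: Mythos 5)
Your proposal is correct and is essentially the paper's own argument in different packaging: the paper pulls the automorphism back along $f:G\rightarrow G/L$ to the trivial bundle $G(Y)\times H(Y)$, writes it as $(u,v)\mapsto (u,a_m v)$ using $G$- and $H$-equivariance, and extracts $\phi(l)a_m=a_m\phi(l)$ from the fact that $m$ descends to $P$ — which is exactly your basepoint/transitivity argument plus the well-definedness check on the quotient, viewed upstairs instead of downstairs. Your explicit attention to the ``over the identity of $G/L$'' convention and to the possible homomorphism-versus-antihomomorphism twist corresponds to what the paper leaves implicit (and to its closing remark about the twisted product on $H(Y)$), so nothing is missing.
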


\begin{proof}
Let $f:G\rightarrow G/L$ be the projection morphism, $Y$ an object of $C$ and $h$ a $(G,H)$-automorphism of $p(Y)$, we denote by $f^*h=m$ the pullback of $h$ to $f^*p(Y)$ $m$ is a $(G,H)$-automorphism of the trivial bundle $G(Y)\times H(Y)$. For every element $(u,v)$ of $G(Y)\times H(Y)$, and every $m\in Aut_G^H(f^*P)(Y)$, we can write $m(Y)(u,v) = (u,va_m(u,v))$. 

Since $m(Y)$ commutes with the action of $G(Y)$, we deduce that $a_m(u,v)$ depends only of $v$. Since $m(Y)$ commutes with $H(Y)$, we deduce that $m(Y)$ is a gauge automorphism. We can write $m(Y)(u,v) = (u,a_mv)$. Consider the map
$a:Aut_G^H(f^*P)(Y)\rightarrow H(Y)$ which sends $m$ to $a_m$. We show now that the image of $a$ is contained in the commutator of $\phi(L)(Y)$ in $H(Y)$. Since $m$ is the pullback of a morphism of $P$, we deduce that $l.m(u,v) = m(l.(u,v))$. This is equivalent to saying that $(lu,\phi(l)a_mv) = (lu,a_m\phi(l)v)$. We deduce that $\phi(l)a_m =a_m\phi(l)$.
 Thus $a_m$ commutes with  $\phi(L)$.  

 Remark that $a$ is injective.  Let $c$ be any element of the commutator  of $\phi(L)(Y)$ in $H(Y)$, we define the $(G,H)$-automorphism $a_c$ by $a_c(u,v) = (u,cv)$. This show that $a$ is bijective.   If we endow $H(Y)$ with the product defined by $xy=y^{-1}x^{-1}$, then $a^{-1}$ is a morphism of groups. 

\end{proof}

\medskip

{\bf Remark.}

A $(G,H)$-torsor over an homogeneous space $p:P\rightarrow G/L$ is often called a principal homogeneous bundle. More generally, if $F$ is any $L$-object, we can define the homogeneous bundle $P\rightarrow G/L$ where $P$ is the quotient of $G\times F$ by the diagonal action of $L$. This defines a functor between the Grothendieck classifying space $BL$ and the category of homogeneous $L$-spaces.

 Homogeneous bundles are intensively studied in differentiable geometry  and in algebraic geometry. The theory of gerbes enables to classify homogeneous bundles structures whose underlying principal $H$-bundle $p:P\rightarrow G/L$  verifies the property $C1$: If the obstruction $c_2\in H^2(BG,L_H)$ of the existence of $(G,H)$-structure vanishes, then the set of homogeneous bundles defined on $p$ is in bijection with $H^1(BG,L_H)$.

\section{Lifting groups in topology.}

We are going to apply in this section the results obtained in the previous section to study the lifting question in the category of topological manifolds. The results that we are going to obtain generalize the results of Hattori and Yoshida [13].

Let $C_{Top}$ be the category of topological manifolds,
 a group object in $C_{Top}$ is a topological group. The topological manifolds considered here are locally contractible.
 
  Let $H$ be a  topological group  which acts on the left (resp. on the right) of the topological   manifold $M$, $x$ an element of $M$ and $h$ an element of $H$. We denote by $h.x$ (resp. $x.h$) the action of $h$ on $x$. 
 In topology, an $H_M$-torsor  is a {\bf locally trivial principal $H$-bundle} $p:P\rightarrow M$: it is defined by a topological manifold $P$ on which $H$ acts freely on the right  such that $P/H$  is homomeomorphic to $M$. There exists an open covering $(U_i)_{\in I}$ of $M$ such that $p^{-1}(U_i)$ is isomorphic to the trivial $H$-bundle $U_i\times H$, and $P$ is obtained by gluing the trivial $H$-bundles $U_i\times H$ with the {\bf transition functions}
 $$
 u_{ij}:U_i\cap U_j\times H\rightarrow U_i\cap U_j\times H
 $$
 $$
 u_{ij}(x,y) = (x,y.g_{ij}(x)).
 $$
 Where $g_{ij}:U_i\cap U_j\rightarrow H$ is a continuous map.
 
 A {\bf gauge } transformation  of the $H$-principal bundle $p:P\rightarrow M$ is an homeomorphism $f$ of $P$ over the identity of $M$ which commutes with the action of $H$. Thus, for every element $y$ of $P$, there exists an element $a_y$ of $H$ such that $f(y)=y.a_y$. Remark that $a_{yh} = h^{-1}a_yh$. We denote by ${\cal G}(P)$ the group of gauge transformations of $P$. It is the set of global sections of the principal $H$-bundle ${\cal P}\rightarrow M$ whose transition functions are defined by:
 $$
 (x,y)\rightarrow (x,y.Ad(g_{ij}^{-1}))
 $$
 where $Ad$ is the adjoint representation of $H$.

 \medskip
 
 We can formulate the lifting problem in this context: we say that the action of $G$ on $M$ can be lifted to $P$ if and only if $P$ is endowed with a left action of $G$ such that for every $x\in P, g\in G$ and $h\in H$ we have:
$$
g.(x.h)=(g.x).h
$$
$$
g.(p(x))=p(g.x)
$$

{\bf Example.}

Let $G$ and $H$ be topological groups, the action of  $G$  on $G\times H$ defined by
$$
g.(g',h) = (gg',h)
$$
where $g,g'\in G$ and $h\in H$ lifts the canonical action of $G$ on itself by left translations.

\medskip

Suppose that the condition $C1$ is satisfied, we have the exact sequence of topological groups:

$$
1\longrightarrow {\cal G}(P){\buildrel{l(p)}\over{\longrightarrow}} Aut_G(P){\buildrel{aut(p)}\over{\longrightarrow}} G\rightarrow 1
$$

. We suppose that $C_{Top}$ is endowed with the weaker topology $J_{Top}$ such that a covering family of the topological space $X$ is a family of local homeomorphisms $(f_i:X_i\rightarrow X)_{i\in I}$ such that $\bigcup_{i\in I}f_i(U_i)=X$. Giraud [6] p. 453 shows that  to study extensions of groups in $C_{Top}$, we can study group extensions in the topos associated to $C_{Top}$. Since the sheaves defined on the final object of $C_{Top}$ are trivial, we have the following result (see also Giraud theorem 8.4):

\begin{theo}
The classiying class $c_p$ of the gerbe $p_G^{Aut_G(P)}:F_G^{Aut_G(P)}\rightarrow B_G$ is the obstruction of the existence of a lifting of $G$ to $P$. If this classifying cocycle vanishes then the set of isomorphism classes of the liftings of $G$ to $P$ is in bijection with $H^1(B_G,L_{{\cal G}(P)})$.
\end{theo}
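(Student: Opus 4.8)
The plan is to deduce this statement from the general topos-theoretic results of Section~3 — it is the exact analogue, in the category $C_{Top}$, of Theorem~3.4 — by passing from $C_{Top}$ to its associated topos $Sh(C_{Top},J_{Top})$. The first step is to invoke the result of Giraud [6] p.~453: classifying extensions of group objects of $C_{Top}$ is the same as classifying extensions of group objects in the topos $Sh(C_{Top},J_{Top})$. In particular the exact sequence of topological groups
$$
1\longrightarrow {\cal G}(P)\longrightarrow Aut_G(P)\longrightarrow G\longrightarrow 1
$$
furnished by the condition $C1$ becomes, via the Yoneda embedding, an exact sequence of group objects of that topos, and a lifting of the action of $G$ on $M$ to $P$ in $C_{Top}$ — i.e. a continuous left action of $G$ on $P$ commuting with $H$ and covering the given action on $M$ — corresponds exactly to a splitting of this exact sequence of group objects. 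Applying Theorem~3.3 with $N=G$, $M=Aut_G(P)$ and $L={\cal G}(P)$ then shows that $p_G^{Aut_G(P)}:F_G^{Aut_G(P)}\rightarrow B_G$ is a gerbe bounded by $L_{{\cal G}(P)}$ whose triviality is equivalent to the splitting of that exact sequence, hence — by the previous remark — to the existence of a lifting of $G$ to $P$.

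It then remains to translate ``trivial gerbe'' into ``vanishing cocycle'', which is Giraud's classification theorem (Theorem~3.1): the gerbe $p_G^{Aut_G(P)}$ is trivial if and only if its classifying cocycle $c_p\in H^2(B_G,L_{{\cal G}(P)})$ is cohomologous to the trivial one. Together with the previous paragraph this gives the first assertion, namely that $c_p$ is precisely the obstruction to lifting the action of $G$ to $P$. For the second assertion, suppose $c_p$ vanishes; Theorem~3.2 then puts the set of isomorphism classes of global sections of the gerbe in bijection with $H^1(B_G,L_{{\cal G}(P)})$, and under the dictionary set up in the proof of Theorem~3.3 the global sections of $F_G^{Aut_G(P)}$ are the splittings of the exact sequence, that is, the liftings of $G$ to $P$ taken up to $(G,H)$-gauge equivalence. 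This yields the claimed bijection.

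The step I expect to require the most care is the dictionary used in the first paragraph: one must check that nothing is lost in passing through $J_{Top}$-sheafification, i.e. that the honest topological data (continuous $G$-actions on $P$ and their gauge equivalences) match on the nose, at the level of isomorphism classes, the topos-theoretic data (splittings of the sequence of group objects, modulo conjugation by ${\cal G}(P)$). This is exactly where one leans on Giraud [6] p.~453 (see also [6] theorem 8.4) and on the standing assumption that the objects of $C_{Top}$ are locally contractible manifolds, which is what makes the relevant sheaf of categories a genuine gerbe. A secondary point to make explicit is why triviality of the gerbe may be tested by the mere existence of an object in the fibre over the final object $e_{B_G}$, rather than only an object defined up to torsion by a ${\cal G}(P)$-torsor over the point as in the last paragraph of the proof of Theorem~3.3: this is because principal $H$-bundles, hence $L$-torsors, over the final object of $C_{Top}$ — a one-point space — are trivial, i.e. the sheaves over it are trivial.
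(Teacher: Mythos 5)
Your proposal is correct and follows essentially the same route as the paper: the paper derives this theorem as a direct application of the general results of Section~3 (the gerbe $p_G^{Aut_G(P)}$ bounded by $L_{{\cal G}(P)}$, Giraud's classification by the $2$-cocycle, and the $H^1$ classification of sections), after invoking Giraud [6] p.~453 to pass from $C_{Top}$ to its associated topos and remarking that sheaves on the final object of $C_{Top}$ are trivial. Your two added caveats (the sheafification dictionary and testing triviality over the final object) are exactly the points the paper disposes of with those two remarks, so nothing further is needed.
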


{\bf Remarks.}

Remark that the condition $C1$ is satisfied if for every element $g\in G$, the pullback of $p:P\rightarrow M$ by $g$ is isomorphic to $p$.

 Hattori and Yoshida [13] has obtained a similar result by supposing the existence of a pseudo-lifting when the group $H$ is commutative.

Suppose that $H$ is the $n$-dimensional torus $T^n$, then $L_{{\cal G}(P)}=H^1(M,Z^n)$. We deduce that $H^2(BG,L_{{\cal G}(P)})=0$ if the first betti number $b_1(M)$ of $M$ is zero. If $G$ is compact connected and simply connected, it is well-known that $\pi_1(G)=\pi_2(G)=1$. The homotopy sequence attached to the universal fibration $E_G\rightarrow B_G$ shows that $\pi_1(B_G)=\pi_2(B_G)=1$, this implies that  $H^2(B_G,L_{{\cal G}(P)})=0$ if $H=T^n$. We thus obtain the following result (compare with Brandt and Haussmann [3] corollary 1.5 and 1.7, Hattori and Yoshida [13] Corollary 3.6 and with Stewart [19] theorem 4.1).

\begin{cor}
Let $M$ be a topological manifold endowed with the action of the compact Lie group $G$ and $p:P\rightarrow M$ be an $H$-principal bundle. Suppose that $H$ is commutative and one of the following condition is verified:

- $G$ is compact connected and $1$-connected,

-$b_1(M)=0$

 then the action of $G$ on $M$ can be lifted to $P$.
\end{cor}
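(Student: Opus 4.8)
By the preceding theorem the class $c_{p}\in H^{2}(B_{G},L_{{\cal G}(P)})$ of the gerbe $p_{G}^{Aut_{G}(P)}:F_{G}^{Aut_{G}(P)}\to B_{G}$ is the obstruction to lifting the action of $G$ on $M$ to $P$, so the plan is simply to check that the standing hypothesis $C1$ holds here and then that $H^{2}(B_{G},L_{{\cal G}(P)})=0$ under each of the two assumptions. To see $C1$ I would use the remark that it suffices to have $g^{*}p\cong p$ for every $g\in G$: since an isomorphism class of $H$-principal bundle over $M$ is determined by the homotopy class of the transition cocycle $(g_{ij})$, the map $g\mapsto[g^{*}p]$ into the discrete set of such classes is locally constant, hence constant on the connected group $G$ and equal to $[p]$; thus $C1$ holds and we obtain the extension $1\to{\cal G}(P)\to Aut_{G}(P)\to G\to 1$ together with the gerbe bounded by $L_{{\cal G}(P)}$.

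I would then record the band in the torus case. Because $H$ is commutative its adjoint representation is trivial, so the bundle ${\cal P}\to M$ with transition functions $Ad(g_{ij}^{-1})$ is the trivial bundle $M\times H$ and ${\cal G}(P)$, being its group of global sections, is $Map(M,H)$. For $H=T^{n}$ this gives, as in the discussion above, $L_{{\cal G}(P)}=H^{1}(M,Z^{n})$, the constant sheaf on $B_{G}$ attached to $[M,T^{n}]$. If $b_{1}(M)=0$ then $H^{1}(M,Z^{n})=0$, because this group is torsion free and its rank $b_{1}(M)$ vanishes; hence $L_{{\cal G}(P)}=0$ and $H^{2}(B_{G},L_{{\cal G}(P)})=0$ trivially, which settles the second case (a general commutative $H$ being reduced to the torus by its exponential sequence).

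For the first case I would argue through the universal fibration $G\to E_{G}\to B_{G}$. Every finite dimensional Lie group has $\pi_{2}(G)=1$, and $\pi_{1}(G)=1$ by hypothesis; contractibility of $E_{G}$ gives $\pi_{n}(B_{G})\cong\pi_{n-1}(G)$, so $\pi_{1}(B_{G})\cong\pi_{0}(G)=1$ and $\pi_{2}(B_{G})\cong\pi_{1}(G)=1$. Hence $B_{G}$ is $2$-connected, so by the Hurewicz and universal coefficient theorems $H^{2}(B_{G},A)=0$ for every discrete coefficient group $A$, with no twisting issue since $\pi_{1}(B_{G})=1$; taking $A=L_{{\cal G}(P)}$ gives $H^{2}(B_{G},L_{{\cal G}(P)})=0$. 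In both cases $c_{p}$ vanishes, so the preceding theorem produces the desired lifting.

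The step I expect to be the main obstacle is making rigorous the passage between the sheaf cohomology $H^{2}(B_{G},L_{{\cal G}(P)})$ of the classifying topos and the ordinary topology of $B_{G}$ and $M$: one must justify that the band $L_{{\cal G}(P)}$, a priori the sheaf on $B_{G}$ built from the non-discrete topological group $Map(M,H)$, may be replaced for the degree-two computation by the constant sheaf $\pi_{0}(Map(M,H))=H^{1}(M,Z^{n})$, and that $H^{2}$ of the topos with such coefficients is computed by the space $B_{G}$. The remaining points, namely $C1$ when $G$ is disconnected and the reduction of an arbitrary commutative $H$ to a torus, are routine and can be absorbed into the hypotheses or treated componentwise.
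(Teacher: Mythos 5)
Your proposal follows essentially the same route as the paper: it invokes the lifting theorem for the gerbe $p_G^{Aut_G(P)}$, identifies the band for $H=T^n$ with $H^1(M,Z^n)$, kills it when $b_1(M)=0$, and in the $1$-connected case uses $\pi_1(G)=\pi_2(G)=1$ and the homotopy sequence of $E_G\rightarrow B_G$ to get $\pi_1(B_G)=\pi_2(B_G)=1$ and hence $H^2(B_G,L_{{\cal G}(P)})=0$. The extra details you supply (checking $C1$ via homotopy invariance of $g^{*}p$, torsion-freeness of $H^1(M,Z)$, and the flagged comparison between topos cohomology of $B_G$ and ordinary cohomology with the discrete band) are points the paper leaves implicit, but they do not change the argument.
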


\section{Applications to differentiable geometry.}

In this section, we are going to work in the category $C_{Diff}$ of differentiable manifolds and use the well-known results which describe actions of compact Lie groups in this category.

\medskip
  
  Suppose that $M$ is a compact manifold  endowed with the left action of the compact Lie group $G$. The {\bf orbit}  of the element $x$ of $M$ is the set $\{g.x, g\in G\}$. We denote by $G_x$ the 
  {\bf stabilizer} of $x$, it is the subgroup of $G$ whose elements fix $x$. We denote  by $[G_x]$ the conjugacy class of $G_x$ in $G$. We say that the elements $x$ and $y$ of $M$ have the same {\bf orbit type} if $G_x$ is conjugated to $G_y$.  The set of conjugacy classes of stabilizers of elements of $M$ is finite and is partially ordered by inclusion, and we denote by $[G_0],[G_1]...,[G_p]$ its elements. It can be shown that this set has a minimal element that we suppose to be $[G_0].$ Let $M_i$ be the subset of $M$ such that the stabilizers of elements of $M_i$ are conjugated to $G_i$; it is a submanifold of $M$ and $M$ is the union of $(M_i)_{i\in \{0,...,p\}}$. The submanifold $M_0$ is dense and the orbits of its elements are called the {\bf principal} orbits. The action of $G$ is {\bf principal} if and only there exists only one orbit type.  We say that the element  $x$ of $M$ is {\bf singular} if the orbit of $x$ is not principal. A submanifold $N$ of $M$ is {\bf singular} if and only if every element of $N$ is singular.
  
 Suppose that $M_i$ is singular or equivalently that $G_i$ is not conjugated to $G_0$. We can define the {\bf blowing-up} of $M$ along $M_i$ as follows: We choose on $M$ a differentiable metric preserved by the elements of $G$. There exists $r>0$ such that the disc subbundle    $D^r(M_i)$ of the normal bundle $N(M_i)$ of $M$ is a tubular neighborhood of $M_i$ and its boundary $S^r(M_i)$ is a submanifold of $M$.  The blowing-up $\tilde{M_i}$ of $M_i$ is the gluing of $M-D^r(M_i)$ and $S^r(M_i)\times [-1,1]$ by identifying their boundaries by
 $$
 (x,u,t)\rightarrow exp_x(tu)
$$
  where $x$ is an element of $M_i$, $u$ an element of $N(M_i)_x$ whose norm is $r$ and $t=1$ or $-1$.
  
 Remark that $\tilde{M_i}$ is endowed with the action of $G$ such that:
 
- The set of conjugacy classes of the stabilizers of the elements of $\tilde{M_i}$ is $\{[G_0],...,[G_p]\}-\{[G_i]\}$
 
- There exists a surjective $G$-morphism $p_i:\tilde{M_i}\rightarrow M$ whose restriction to $\tilde{M_i}-p_i^{-1}(M_i)$ is a diffeomorphism onto $M-M_i$.

We can iterate this process and obtain a manifold $\tilde{M}$ endowed with an action of $G$ and a surjective morphism $\tilde{p}:\tilde{M}\rightarrow M$ such that:

- The action of $G$ on $\tilde{M}$ is principal and the conjugacy classes of the stabilizers of elements of $\tilde{M}$ is $[G_0]$.

The restriction of $\tilde{p}$ to $\tilde{M}-\tilde{p}^{-1}({M_1,...,M_p})$ is a diffeomorphism onto $M-\bigcup_{i=1}^{i=p}M_i$.
  
\bigskip

Since there is a good understanding of the  geometry of the actions of compact Lie groups on compact manifolds, we  start by studying the locally the question 1, or equivalently by classifying $(G,H)$-bundles defined on homogeneous spaces, after that we use gerbe theory to study the global situation.

 Firstly, we have to ensure that given an element $x$ of $M$,  $\rho_x^*p\rightarrow G/G_x$ the pullback of $p$ by the orbit map $\rho_x$  is endowed with the structure of a $(G,H)$-bundle. This leads to the following condition which appears in the work of Brandt and Haussmann [3]:

\smallskip
C2. We suppose that for every element $x$ of $M$, and $y$ in $p^{-1}(x)$, there exists a continuous map $c:G\rightarrow P$ such that $c(1_G)=y$ and $p(c(g))=g.x$. 
\smallskip

The global question leads to the following question:

\medskip

{\bf Question 2.}

 Suppose that the action of $G$ can be lifted to the pullback of $p$ over an orbit of $G$, is this action can be lifted to $P$ ?
 
 \medskip

As we have seen, the local study is equivalent to the classification of homogeneous principal bundles. 
We define in this section Grothendieck topologies which occur in differential geometry. The two last examples are related to the lifting problem studied in this paper.

Let $Diff^{im}$ be the category whose objects are finite dimensional manifolds and whose morphisms are immersions. We define on the comma category $Diff^{im}/M$, over the object $M$ of $Diff^{im}$, the topology   $J_M^{s}$ such that for every object $h_U:U\rightarrow M$ of $Diff^{im}/M$, an element $R$ of $J_M^s(h_U)$ is a family of morphisms $(h_{U_i}:U_i\rightarrow U)_{i\in I}$ of $Diff^{im}/U$ such that  $\bigcup_{i\in I}h_{U_i}(U_i)=U$.  This topology is called the small site.

\medskip

Let $Diff^{op}$ be the category whose objects finite dimensional manifolds and whose objects are open maps. We define on the comma category $Diff^{op}/M$, over the object $M$ of $Diff^{op}$, the topology  $J_M^b$ such that for every object $h_U:U\rightarrow M$ of $Diff^{op}/M$, an element $R$ of $J_M^b(h_U)$ is a family of objects $(h_{U_i}:U_i\rightarrow U)_{i\in I}$ of $Diff^{op}/U$ such that  $\bigcup_{i\in I}h_{U_i}(U_i)=U$.  This topology is called the big site. 
\medskip

The following  two topologies will be particularly relevant for this paper. 

\medskip

Let $G$ be a  Lie group, $Diff_G^{im}$ be the category whose objects are finite dimensional $G$-manifolds and whose morphisms are $G$-immersions. Let $M$ be an object of $Diff_G^{im}$. We define on the comma category $Diff_G^{im}/M$ that we denote also $s_{G,M}$   the topology $J_G^s$  such that for every object $h_U:U\rightarrow M$ (also denoted $(h_U,U)$) an element $R$ of $J_G^s(h_U,U)$ is a family of objects $(h_{U_i}:U_i\rightarrow U)_{i\in I}$ of $s_{G,M}/U$ such that  $\bigcup_{i\in I}h_{U_i}(U_i)=U$.  This topology is called the small $G$-site.

\medskip

Let  $b_{G,M}$ be the category whose objects couples $(h_U,U)$ where $h_U$ is an  open $G$-differentiable maps $h_U:U\rightarrow M$ such that:

$U$ is the blowing-up of the singular subspace $N_U$ of $h_U(U)$,

The differentiable map $h_U$ is a blowing-up projection map $h_U:U\rightarrow h_U(U)$. Remark that the restriction of $h_U$ to $U-{h_U}^{-1}(N_U)$ is a diffeomorphism onto $U-N_U$.  

 A morphism  between the objects $h_U:U\rightarrow M$ and $h_{U'}:U'\rightarrow M$ of $b_{G,M}$ is an open $G$-differentiable map $f:U\rightarrow U'$ such that $h_{U'}\circ f = h_U$.  We define on $b_{G,M}$ the topology $J_G^b$ such that for every object $h_U:U\rightarrow M$ of $b_{G,M}$, an element $R$ of $J_G^b(h_U)$ is a family of objects $(h_{U_i}:U_i\rightarrow U)_{i\in I}$ of $b_{G,M}/h_U$ such that  $\bigcup_{i\in I}h_{U_i}(U_i)=U$. 

{\bf Remark.}

Let $M$ be a manifold endowed with the action of the Lie group $G$, if the action is principal then the categories $b_{G,M}$ and $s_{G,M}$ are identical. In fact $b_{G,M}$ is $s_{G,r(M)}$ where $r(M)$ is the resolution of $M$.

\medskip

Let $M$ be a differentiable manifold endowed with the left action of the Lie group $G$, for every element $x$ of $M$, denote by $V_x$  the quotient of the tangent space $T_xM$ of $x$, by its subspace $W_x$ tangent to the orbit of $x$. Remark that the differential $dg$ of every element $g$ of $G_x$ induces an automorphism of $V_x$. We deduce that the group $G_x$ acts diagonally on the left of $G\times V_x$ by: 
$$g.(g',v)=(g'g^{-1},dg(v)).
$$ 
and the quotient of $G\times V_x$ by this action is a vector bundle $p_x:S_x\rightarrow G/G_x$ defined over $G/G_x$ whose typical fiber is $V_x$. We will often use the following slice theorem due to Koszul:

\begin{theo}

 Let $M$ be a compact differentiable manifold endowed with the left action of the compact Lie group $G$, there exists a neighborhood $U$ (called a Koszul neighborhood) of the zero section of $p_x$ and a $G$-morphism $U\rightarrow M$ which is a diffeomorphism of $U$ onto its image.
\end{theo}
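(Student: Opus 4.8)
The plan is to identify the bundle $p_x:S_x\to G/G_x$ with the normal bundle of the orbit $G\cdot x$ for a suitable $G$-invariant metric, and to use the exponential map of that metric as the required $G$-morphism.

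First I would average an arbitrary Riemannian metric over $G$, using that $G$ is compact, to obtain a $G$-invariant Riemannian metric on $M$. With respect to this metric the orbit $G\cdot x$ is a compact submanifold of $M$ diffeomorphic to $G/G_x$, and its normal bundle $\nu\to G\cdot x$ is a $G$-equivariant vector bundle. The fibre $\nu_x$ over $x$ is the orthogonal complement $W_x^{\perp}$ of $W_x=T_x(G\cdot x)$ inside $T_xM$; since $G_x$ fixes $x$ it preserves both $W_x$ and the metric, so $W_x^{\perp}$ is a $G_x$-invariant complement to $W_x$, and the quotient projection $T_xM\to V_x$ restricts to a $G_x$-equivariant isomorphism $W_x^{\perp}\cong V_x$. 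The associated bundle construction then gives $G$-equivariant isomorphisms $\nu\cong G\times_{G_x}\nu_x\cong G\times_{G_x}V_x=S_x$ of vector bundles over $G/G_x$, the second one induced by $W_x^{\perp}\cong V_x$ and the first one by $[g,v]\mapsto dg(v)\in\nu_{g\cdot x}$.

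Next I would consider the normal exponential map $\exp^{\perp}:\nu\to M$ sending $v$ in $\nu_y$ to $\exp_y(v)$. It is $G$-equivariant because the elements of $G$ act by isometries and isometries intertwine the exponential maps. Along the zero section it restricts to the inclusion $G\cdot x\hookrightarrow M$, and its differential at a point $(y,0)$ is the canonical isomorphism $T_y(G\cdot x)\oplus\nu_y\cong T_yM$; hence $\exp^{\perp}$ is a local diffeomorphism at every point of the zero section. Because $G\cdot x$ is compact, the equivariant tubular neighbourhood theorem provides an $\varepsilon>0$ such that the disc subbundle $D_{\varepsilon}=\{v\in\nu:\|v\|<\varepsilon\}$ --- which is a $G$-invariant open neighbourhood of the zero section, the norm function being $G$-invariant --- is mapped by $\exp^{\perp}$ diffeomorphically onto an open $G$-invariant neighbourhood of $G\cdot x$ in $M$. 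Transporting $D_{\varepsilon}$ through the isomorphism $S_x\cong\nu$ then yields the Koszul neighbourhood $U$ of the zero section of $p_x$ together with the $G$-morphism $U\to M$ which is a diffeomorphism onto its image.

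I expect the main obstacle to be the passage from ``$\exp^{\perp}$ is a local diffeomorphism near the zero section'' to ``$\exp^{\perp}$ is injective on a uniform disc subbundle $D_{\varepsilon}$''. This is exactly the point where compactness of $G$, hence of the orbit, is essential: one argues by contradiction, taking $v_n\neq w_n$ in $\nu$ with $\|v_n\|,\|w_n\|\to 0$ and $\exp^{\perp}(v_n)=\exp^{\perp}(w_n)$, and extracting convergent subsequences whose common limit lies on the zero section, where local injectivity is violated. The remaining ingredients --- existence of the invariant metric, $G$-equivariance of $\exp^{\perp}$, and the identification $\nu\cong S_x$ --- are routine.
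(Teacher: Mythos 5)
Your argument is correct in substance, but note that the paper itself offers no proof of this statement: it is quoted as the slice theorem due to Koszul and used as a known input, so there is no in-paper argument to compare against. Your route --- average a Riemannian metric over the compact group, identify $S_x=G\times_{G_x}V_x$ with the normal bundle of the orbit $G\cdot x$ via the $G_x$-equivariant isomorphism $V_x\cong W_x^{\perp}$, and push the disc subbundle into $M$ by the equivariant normal exponential map --- is the classical proof of this theorem, so there is nothing structurally different to weigh here. Two small points to tighten. First, appealing to the equivariant tubular neighbourhood theorem is essentially circular, since that theorem is the statement being proved; you implicitly recognize this, and the explicit compactness argument you sketch (local diffeomorphism at every point of the zero section, plus the subsequence/contradiction argument giving injectivity on a uniform disc bundle $D_{\varepsilon}$) is the correct way to close that step, so make it the argument rather than a fallback. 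Second, say a word about why $\exp^{\perp}$ is even defined on a uniform disc bundle --- immediate here because $M$ is compact, hence geodesically complete --- and verify well-definedness of $[g',v]\mapsto dg'(v)$ against the paper's convention $g.(g',v)=(g'g^{-1},dg(v))$ for the diagonal action defining $S_x$; the check $d(g'g^{-1})\bigl(dg(v)\bigr)=dg'(v)$ goes through, so the identification $S_x\cong\nu$ is as you claim.
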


\medskip

{\bf Remarks.}

The slice theorem shows the existence of objects of the category $s_{G,M}$   and for every object $h_U:U\rightarrow M$ of $s_{G,M}$, elements of $J^s_{G}(h_U)$ by providing a family Koszul neighborhoods $(U_i)_{i\in I}$ such that $\bigcup_{i\in I}U_i=U$. 

We can also obtained objects of $b_{G,M}$ by blowing-up objects of $s_{G,M}$ along singular submanifolds.

\medskip

We are going apply the theory of gerbe  to study the question 2.

Let $M$ be a differentiable manifold endowed with the action of a Lie group $G$, $H$ a Lie group and $p:P\rightarrow M$ a locally trivial principal $H$-bundle defined over $M$. 
 We denote by $F_G^b$ (resp. $F_G^s$) the category whose objects are principal $H$-bundles $p_{e_N}:e_N\rightarrow N$ and which satisfy the following conditions:
 
 - There exists an object $f:N\rightarrow M$  of $b_{G,M}$  (resp. $s_{G,N}$) such that
  $e_N$ is isomorphic to the pullback $f^*p$ of $p$ by $f$,
  
  - $e_N$ is endowed with an action of $G$ which commutes with $H$ and $p_{e_N}$ is a $G$-morphism.
  
   A morphism $f:e_N\rightarrow e_{N'}$ of $F_G^b$ is a morphism of $H$-bundles which commutes with the action of $G$. This implies the existence of a $G$-morphism $g:N\rightarrow N'$ such that the following square is commutative:

$$
\matrix{e_N &{\buildrel{f}\over{\longrightarrow}}& e_{N'}\cr p_N\downarrow && \downarrow p_{N'}\cr N &{\buildrel{g}\over{\longrightarrow}} & N'}
$$

For every every object $u:N\rightarrow M$ of $b_{G,M}$ (resp. $s_{G,M}$), we denote by $F_G^b(u)$ (resp. $F_G^s(u)$) the  subcategory of $F_G^b$ (resp. $F_G^s$) which are $H$-bundles isomorphic to $u^*p$ and whose morphisms   are morphisms of $F_G^b$ (resp. $F_G^s$) over the identity of $u$. 

We can define the functor $p_G^b:F_G^b\rightarrow b_{G,M}$ (resp. $p_G^s:F_G^s\rightarrow s_{G,M}$) such that for every object $e_N\rightarrow N$ of $F_G^b$ over the object $h_N:N\rightarrow M$ of $b_{G,M}$ (resp. $F_G^s$ over the object $N$ of $s_{G,M}$) and every morphism $f:e_N\rightarrow e_{N'}$ of $F_G^b$ (resp. $F_G^s$) over $g:N\rightarrow {N'}$, $p_G^b(e_N)= h_N$ and $p_G^b(f)=g$ (resp. $p_G^s(e_N)= N$ and $p_G^s(f)=g$).

\medskip 
 
{\bf Remark.}

 The category $F_G^b(f)$ is a groupoid: let $g:e_N\rightarrow e'_N$ be a morphism of $F_G^b(f)$, the restriction of $g$ on each fibre of $e_N$ is a diffeomorphism since $f$ commutes with $H$, thus the inverse of $f$ can be calculated fiberwise.
 
\begin{prop}

{\it The functor $p_G^b:F_G^b\rightarrow b_{G,M}$  is a fibred category.}
\end{prop}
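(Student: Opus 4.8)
The plan is to verify the two clauses in the definition of a fibred category directly, taking pullbacks of $H$-bundles as the Cartesian morphisms. Fix a morphism $g:N\rightarrow N'$ of $b_{G,M}$ (so $g$ is an open $G$-differentiable map with $h_{N'}\circ g=h_N$) and an object $p_{e_{N'}}:e_{N'}\rightarrow N'$ of $F_G^b$ lying over $h_{N'}$. I would put $A=g^{*}e_{N'}=N\times_{N'}e_{N'}$. Since $p_{e_{N'}}$ is a submersion this fibre product is a manifold and $A\rightarrow N$ is a principal $H$-bundle; because $e_{N'}$ is isomorphic to $h_{N'}^{*}p$ and $h_{N'}\circ g=h_N$, one gets $A\cong g^{*}h_{N'}^{*}p=(h_{N'}\circ g)^{*}p=h_N^{*}p$, so $A$ satisfies the first defining condition of an object of $F_G^b$ with $f=h_N$. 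The diagonal left action of $G$ on $N\times_{N'}e_{N'}$ is well defined precisely because $g$ is $G$-equivariant; it commutes with the $H$-action and makes $A\rightarrow N$ a $G$-morphism, so $A$ is an object of $F_G^b$ over $h_N$. The canonical projection $m:A\rightarrow e_{N'}$ is then a morphism of $H$-bundles that commutes with the $G$-action and covers $g$, hence a morphism of $F_G^b$ with $p_G^b(m)=g$; this is the candidate Cartesian morphism, i.e. $A=r(g)(e_{N'})$.

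Next I would check that $m$ is Cartesian, namely that for every object $A'$ of the fibre $F_{h_N}$ the map $c_m(A'):Hom_{F_{h_N}}(A',A)\rightarrow Hom_g(A',e_{N'})$, $h\mapsto m\circ h$, is a bijection. Given $u\in Hom_g(A',e_{N'})$, i.e. a morphism of $H$-bundles commuting with $G$ whose induced base map is $g$, one has $p_{e_{N'}}\circ u=g\circ p_{A'}$, so the universal property of the fibre product $N\times_{N'}e_{N'}$ provides a unique smooth map $\tilde u:A'\rightarrow A$ with first component $p_{A'}$ and second component $u$; it lies over $Id_N$, and since each component is $H$- and $G$-equivariant, $\tilde u$ is a morphism of $F_G^b$ and $m\circ\tilde u=u$. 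This gives surjectivity of $c_m(A')$, and injectivity is exactly the uniqueness clause of the same universal property. Hence $m$ is Cartesian, which shows $p_G^b$ admits Cartesian lifts of every base morphism.

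Finally, I would observe that a composition of two Cartesian morphisms is Cartesian: up to isomorphism the Cartesian morphisms constructed above are the pullback projections, and for $g':N''\rightarrow N'$ and $g:N'\rightarrow N$ the pullback of an $H$-bundle along $g\circ g'$ is canonically the iterated pullback along $g'$ then $g$, so a composite of two such projections is again one and is therefore Cartesian by the previous paragraph. Assembling these facts yields that $p_G^b:F_G^b\rightarrow b_{G,M}$ is a fibred category.

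I do not expect a genuine obstacle here; the only points needing care are the simultaneous bookkeeping of the three structures carried by objects of $F_G^b$ (the principal $H$-bundle structure, the $G$-action, and the property of being a pullback of $p$) through the pullback, and phrasing the universal property in the category of $G$-equivariant $H$-bundles so that the factoring map $\tilde u$ automatically inherits both equivariances. The identification $A\cong h_N^{*}p$ relies on the defining relation $h_{N'}\circ g=h_N$ for morphisms of $b_{G,M}$, and the well-definedness of the diagonal $G$-action on $A$ relies on $g$ being a $G$-morphism; once these are noted, the argument is routine.
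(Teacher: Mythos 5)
Your proposal is correct and follows essentially the same route as the paper: Cartesian lifts are taken to be the pullback bundles, Cartesianness is verified by the universal property of the fibre product, and composites of Cartesian morphisms are handled by identifying an arbitrary Cartesian morphism with a canonical pullback projection (the paper makes this last reduction explicit via an isomorphism in the groupoid fibre $F_G^b(f_3)$, which is the justification behind your phrase ``up to isomorphism''). You are in fact somewhat more careful than the paper in checking that the pullback really is an object of $F_G^b$ (that it is a pullback of $p$, carries the diagonal $G$-action, and that the factoring map inherits both equivariances), which the paper leaves implicit.
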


\begin{proof}

Let $f:N\rightarrow M$, $f':N'\rightarrow M$ be objects of $b_{G,M}$, $u:N\rightarrow N'$ be a morphism of $b_{G,M}$ and $e_{N'}$ an object of $F_G^b(f')$. The pullback $u^*(e_{N'})=e_N$ of $e_{N'}$ is an object of $F_G^b(f)$ defined by the Cartesian square:

$$
\matrix{e_N &{\buildrel{u^*}\over{\longrightarrow}}& e_{N'}\cr p_{N}\downarrow &&\downarrow p_{N'}\cr N & {\buildrel{u}\over{\longrightarrow}}& N'}
$$

We are going to show that $u^*$ is a Cartesian morphism. Let $e'_N$ be another object of $F_G^b(f)$. We have to show that the morphism
$c_u:Hom_{Id_N}(e'_N,e_N)\rightarrow Hom_u(e'_N,e_{N'})$ defined by $c_u(h)=u^*\circ h$ is bijective. This results from the general properties of the fibre products. (See also theorem 3.3).

To complete the proof, we are going to show that the composition of two Cartesian morphisms is a Cartesian morphism. Let $f_i:N_i\rightarrow M, i=1,2,3$ be objects of  
$b_{G,M}$, $u_i:N_{i+1}\rightarrow N_i, i=1,2$ be morphisms of $b_{G,M}$. Consider the Cartesian morphisms $v_i:e_{i+1}\rightarrow e_i, i=1,2$ Cartesian morphisms above $u_i, i =1,2$. Since $(u_1\circ u_2)^* =u_2^*\circ u_1^*$ is Cartesian and $F_G^b(f_3)$ is a groupoid, we deduce the existence of an isomorphism $l:e_3\rightarrow u_2^*(u_1^*(e_1))$ such that $v_1\circ v_2 =u_2^*\circ u_1^*\circ l$. Since $u_2^*\circ u_1^*$ is Cartesian and $l$ is an isomorphism, we deduce that  $v_1\circ v_2$ is a Cartesian morphism.

\end{proof}

 We can show with the same methods used to prove proposition 4.1 the following result:

\begin{prop}

The functors $p_G^s:F_G^s\rightarrow s_{G,M}$ is a fibred categories.
\end{prop}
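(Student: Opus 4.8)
The plan is to mirror the proof of Proposition 4.1 almost verbatim, simply replacing the big $G$-site $b_{G,M}$ by the small $G$-site $s_{G,M}$ and noting that every step used only two structural facts: that fibre products of $H$-bundles equipped with a commuting $G$-action exist in the relevant ambient category, and that the fibres $F_G^s(f)$ are groupoids. The first thing I would do is record that the fibres $F_G^s(f)$ are groupoids: if $g:e_N\rightarrow e'_N$ is a morphism of $F_G^s(f)$, then it is a morphism of $H$-bundles over $\mathrm{Id}_N$ commuting with $G$, so its restriction to each fibre of $e_N$ is an $H$-equivariant bijection, hence a diffeomorphism, and the fibrewise inverses glue to an inverse morphism of $F_G^s(f)$. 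This is the exact analogue of the Remark preceding Proposition 4.1, and it is what makes the ``groupoid'' part of the argument go through.

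Next I would establish the existence of Cartesian morphisms. Given objects $f:N\rightarrow M$ and $f':N'\rightarrow M$ of $s_{G,M}$, a morphism $u:N\rightarrow N'$ of $s_{G,M}$ (a $G$-immersion over $M$), and an object $e_{N'}$ of $F_G^s(f')$, I would form the pullback $u^*(e_{N'})=e_N$ via the Cartesian square in the category of $G$-manifolds over $N$; since $e_{N'}\cong (f')^*p$ and $f'\circ u=f$, we get $e_N\cong f^*p$, so $e_N$ is an object of $F_G^s(f)$, and it carries the $G$-action pulled back from $e_{N'}$ which commutes with $H$ and for which $p_{e_N}$ is a $G$-morphism. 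That $u^*:e_N\rightarrow e_{N'}$ is Cartesian — i.e. that $c_u:\mathrm{Hom}_{\mathrm{Id}_N}(e'_N,e_N)\rightarrow\mathrm{Hom}_u(e'_N,e_{N'})$, $h\mapsto u^*\circ h$, is a bijection for every object $e'_N$ of $F_G^s(f)$ — is again a direct consequence of the universal property of the fibre product, exactly as invoked in Proposition 4.1 and Theorem 3.3.

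Finally I would check that a composite of Cartesian morphisms is Cartesian. Given objects $f_i:N_i\rightarrow M$ for $i=1,2,3$ of $s_{G,M}$, morphisms $u_i:N_{i+1}\rightarrow N_i$ for $i=1,2$ of $s_{G,M}$, and Cartesian morphisms $v_i:e_{i+1}\rightarrow e_i$ above $u_i$, I would use that $(u_1\circ u_2)^*=u_2^*\circ u_1^*$ is Cartesian together with the fact that $F_G^s(f_3)$ is a groupoid to produce an isomorphism $l:e_3\rightarrow u_2^*(u_1^*(e_1))$ with $v_1\circ v_2=u_2^*\circ u_1^*\circ l$; since $u_2^*\circ u_1^*$ is Cartesian and $l$ is an isomorphism, $v_1\circ v_2$ is Cartesian. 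This closes the proof. The only point requiring a sentence of care — and the place where the small site differs subtly from the big site — is that the pullback of a $G$-immersion over $M$ along a $G$-immersion again lands in $s_{G,M}$ (i.e. the fibre product of two $G$-immersions exists as a manifold and the projections are $G$-immersions); once this is acknowledged, the argument is formally identical to that of Proposition 4.1, which is why I would simply write ``We can show with the same methods used to prove Proposition 4.1 the following result'' and give the brief indications above. I expect the main obstacle to be purely expository: making sure the reader sees that nothing in the earlier proof used openness of the maps rather than the mere existence of the relevant fibre products.
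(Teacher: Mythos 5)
Your proposal is correct and coincides with the paper's treatment: the paper gives no separate argument for $p_G^s$ beyond the remark that it follows ``with the same methods used to prove proposition 4.1,'' and your step-by-step transcription (groupoid fibres, pullback squares giving Cartesian morphisms, composition of Cartesian morphisms via an isomorphism in the fibre) is exactly that argument. Your added caveat about fibre products of $G$-immersions existing in $s_{G,M}$ is a point the paper silently glosses over, and flagging it only improves on the original.
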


\begin{prop}
 The functor $p_G^b:F_G^b\rightarrow b_{G,M}$  is a sheaf of categories.
\end{prop}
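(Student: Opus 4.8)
The plan is to verify the sheaf‑of‑categories axiom for $p_G^b:F_G^b\to b_{G,M}$ by reusing the descent argument from Theorem 3.3, adapted to principal $H$‑bundles carrying a compatible $G$‑action. Having established in Proposition 4.1 that $p_G^b$ is a fibred category, it remains to show that for every object $h_N:N\to M$ of $b_{G,M}$ and every sieve $R\in J_G^b(h_N)$ the restriction functor $i_R^{h_N}:\mathrm{Cart}_{i_{h_N}}(b_{G,M}/h_N,F_G^b)\to\mathrm{Cart}_{i_R}(R,F_G^b)$ is an equivalence of categories, i.e. fully faithful and essentially surjective.

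First I would treat essential surjectivity. Fix a generating family $(h_{N_i}:N_i\to N)_{i\in I}$ of $R$ with $\bigcup_i h_{N_i}(N_i)=N$, and let $e$ be a Cartesian section over $R$; write $e(h_{N_i})=(p_i:e_i\to N_i)$. Using that $p_G^b$ is fibred, form the Cartesian restrictions of $e_i$ and $e_j$ to $N_{ij}=N_i\times_N N_j$; since $F_G^b(h_{N_i}\circ h_{N_i}^j)$ is a groupoid (the Remark preceding Proposition 4.1), the two restrictions are identified by a canonical isomorphism $n_{ij}:e_j^i\to e_i^j$ of $H$‑bundles commuting with $G$, and these satisfy the Chasles (cocycle) relation $n_{ik}^j=n_{ij}^k n_{jk}^i$ on triple overlaps. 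The key point is that the local pieces $e_i$ are, by definition of $F_G^b$, pullbacks of the fixed bundle $p:P\to M$ along objects of $b_{G,M}$, so gluing the $e_i$ along the $n_{ij}$ produces a principal $H$‑bundle $e\to N$ that is again a pullback of $p$ along $h_N$; the $G$‑actions on the $e_i$, being compatible on overlaps, glue to a $G$‑action on $e$ commuting with $H$ and making the projection a $G$‑morphism. Thus $e$ is an object of $F_G^b(h_N)$ restricting to the given section, exactly as in the last paragraph of the proof of Theorem 3.3.

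Next, full faithfulness: given Cartesian sections $e,e'$ over $b_{G,M}/h_N$, a morphism between their restrictions to $R$ is a compatible family $h_i:e_i\to e_i'$ of $H$‑bundle maps commuting with $G$ whose restrictions to overlaps agree; by the uniqueness in the descent/gluing of morphisms of bundles this family glues to a unique morphism $e\to e'$ over $h_N$, and faithfulness follows because a morphism vanishing on each $e_i$ vanishes globally. The main obstacle I anticipate is not the gluing of the underlying $H$‑bundles — that is standard descent along an open covering in $b_{G,M}$ — but checking that the descent data are genuinely $G$‑equivariant: one must confirm that the canonical isomorphisms $n_{ij}$ commute with the $G$‑actions and that the glued action is again by $G$‑morphisms with $p_{e}$ a $G$‑morphism. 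This is where the precise definition of the morphisms of $b_{G,M}$ (open $G$‑differentiable maps over $M$) and of $F_G^b$ (whose arrows are $H$‑bundle maps commuting with $G$) does the work, so the verification is routine but must be carried out carefully; the argument is otherwise identical in structure to Theorem 3.3, which I would invoke for the bundle‑theoretic descent.
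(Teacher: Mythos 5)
Your argument is essentially the paper's own proof: it uses the fibred-category structure already established, builds the transition isomorphisms $n_{ij}$ from Cartesian restrictions of the section over the covering family, invokes the Chasles (cocycle) relation to glue the local $(G,H)$-bundles for essential surjectivity, and handles full faithfulness by gluing and uniqueness of morphisms on the covering. The only difference is cosmetic: you explicitly flag that the glued bundle should again be a pullback of $p$ along $h_N$ and that the $n_{ij}$ must be $G$-equivariant, points the paper leaves implicit, but the structure and substance of the argument coincide.
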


\begin{proof}
Let $f_b:N\rightarrow M$  be an object of  $b_{G,M}$, we have to show that for every sieve   $R_b$   $J_G^b(f_b)$ the restriction functor  $i_{R_b}^{f_b}:Cart_{i_{f_b}}(b_{G,M}/f_b,p_G^b)\rightarrow Cart_{i_{R_b}}(R_b,p_G^b)$ is an equivalence of categories, a fact which is equivalent to saying that this functor is essentially surjective and fully faithful.

Firstly we show that $i_{R_b}^{f_b}$ is essentially surjective.  Consider an object $e$ of $Cart_{i_{R_b}}(R_b,p_G^b)$ and let $(f_i:N_i\rightarrow N)_{i\in I}$ be a family of objects of $R_b$ such that $\bigcup_{i\in }f_i(N_i)=N$. Denote $e(f_i)$ by $p_i:e_i\rightarrow N_i$.

  Since $p_G^b$ is a fibred category, there exists a Cartesian morphism
$l_i^j:e_i^j\rightarrow e_i$ above the morphism $f_{ij}:N_i\times_N N_j\rightarrow N_i$. Let $e_{ij}=e(f_i\circ f_{ij})$. Since $l_i^j$ is Cartesian map, we deduce the existence of a morphism $m_i^j:e_{ij}\rightarrow e_j^i$ such that $p_G^b(l_i^j\circ m_i^j) =f_{ij}$. Remark that $m_i^j$ is invertible since $F_G^b(f_i\circ f_{ij})$ is a groupoid. Thus we can write $n_{ij}=m_i^j\circ {m_j^i}^{-1}:e_j^i\rightarrow e_i^j$.

Let $n_{ij}^k$ be the pullback of $n_{ij}$ above $N_i\times_NN_j\times_NN_k$,
We have the Chasles relation $n_{ik}^j=n_{ij}^kn_{jk}^i$. Since $e_i$ is a manifold, we deduce the existence of a manifold $e$ obtained by gluing the family $(e_i)_{i\in I}$ endowed with actions of $G$ and $H$ which commute each other and such that the action of $H$ is free. The quotient $N'$ of $e$ by $H$ is a manifold $N'$ endowed with an action of $G$ such that the canonical projection $p_e:e\rightarrow N'$ is a $G$-map. Moreover, there exists canonical maps $h_i:N_i\rightarrow N'$ such that $p_i$ is the pullback of $p_e$ by $h_i$. This implies that $i_{R_b}^{f_b}$ is essentially surjective.

Now we show that $i_{R_b}^{f_b}$ is fully faithful.

Let $e,e'$ be elements of $Cart_{i_{f_b}}(b_{G,M}/f^b,p_G^b)$ and $g,g':e\rightarrow e'$  morphisms. Let $(f_i:N_i\rightarrow N)_{i\in I}$ be objects of $R$ such that $\bigcup_{i\in I}f_i(N_i)=N$,  $e_i$ the restriction of $e$  to $f_i$ and $e'_i$ the restriction of $e'$ to $f_i$. Suppose that $i_{R_b}^{f^b}(g)=i_{R_b}^{f_b}(g')$, then the restriction of $g$ and $g'$ to $e_i$ are equal. This implies that $g=g'$ since $\bigcup_{i\in I}f_i(N_i)=N$. Let $h:i_{R_b}^{f_b}(e)\rightarrow i_{R_b}^{f_b}(e')$ be a morphism, $h$ is defined by morphisms $h_i:e_i\rightarrow e_i'$ such that
the restriction $h_i^j$ of $h_i$ to the pullback $e_i^j$ of $e_i$ to $N_i\times_N N_j$ coincide with the restriction $h_j^i$ of $h_j$ to $e_j^i=e_i^j$. This implies that there exists a morphism $g:e\rightarrow e'$ whose restriction to $e_i$ is $h_i$. Thus $i_{R_b}^{f_b}$ is fully faithful and henceforth an equivalence of categories.
\end{proof}

We can also show the following proposition by using the methods used to prove the proposition 5.3.

\begin{theo}
 The functor $p_G^s$ is a  sheaf of categories.
\end{theo}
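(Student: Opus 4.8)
The plan is to mirror, essentially verbatim, the proof of Proposition 5.3, since the only difference between $p_G^b$ and $p_G^s$ is the underlying site: in $s_{G,M}$ the morphisms are $G$-immersions rather than open $G$-maps, and the objects carry no blowing-up constraint. Concretely, I would fix an object $f_s:N\rightarrow M$ of $s_{G,M}$ and a sieve $R_s\in J_G^s(f_s)$, and show that the restriction functor
$$
i_{R_s}^{f_s}:Cart_{i_{f_s}}(s_{G,M}/f_s,p_G^s)\rightarrow Cart_{i_{R_s}}(R_s,p_G^s)
$$
is an equivalence of categories, i.e.\ essentially surjective and fully faithful. Both halves rest on the fact that $p_G^s$ is a fibred category (Proposition 5.2) together with the fact that, by definition of $J_G^s$, every element of $J_G^s(f_s)$ is generated by a family $(f_i:N_i\rightarrow N)_{i\in I}$ of $G$-immersions with $\bigcup_{i\in I}f_i(N_i)=N$, and on the observation that each fibre $F_G^s(f)$ is a groupoid (the fibrewise inversion argument stated in the Remark before Proposition 5.1 for $F_G^b$ applies unchanged, since there too morphisms commute with the free $H$-action).

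For essential surjectivity: given $e\in Cart_{i_{R_s}}(R_s,p_G^s)$, write $p_i:e_i\rightarrow N_i$ for $e(f_i)$. Using the Cartesian morphisms $l_i^j:e_i^j\rightarrow e_i$ above the projections $f_{ij}:N_i\times_N N_j\rightarrow N_i$ furnished by Proposition 5.2, and the fact that $F_G^s(f_i\circ f_{ij})$ is a groupoid, I would extract transition isomorphisms $n_{ij}:e_j^i\rightarrow e_i^j$, check the Chasles relation $n_{ik}^j=n_{ij}^k n_{jk}^i$ over triple overlaps, and glue the $(e_i)_{i\in I}$ into a manifold $e$ carrying commuting actions of $G$ and $H$ with $H$ acting freely. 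The quotient $N'=e/H$ is then a $G$-manifold, $p_e:e\rightarrow N'$ a $G$-equivariant principal $H$-bundle, and there are canonical $G$-immersions $h_i:N_i\rightarrow N'$ realizing $p_i$ as $h_i^*p_e$; this exhibits $e$, over the induced object $N'\rightarrow M$ of $s_{G,M}$, as a preimage of the given descent datum. For fully faithfulness: if $g,g':e\rightarrow e'$ in $Cart_{i_{f_s}}(s_{G,M}/f_s,p_G^s)$ have equal image under $i_{R_s}^{f_s}$, then $g$ and $g'$ agree on each $e_i$ and hence everywhere because $\bigcup_i f_i(N_i)=N$; and a morphism of $Cart_{i_{R_s}}(R_s,p_G^s)$ is a family $h_i:e_i\rightarrow e_i'$ whose restrictions to the overlaps $e_i^j=e_j^i$ coincide, so the $h_i$ glue to a morphism $e\rightarrow e'$. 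Together these give the equivalence.

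The main obstacle I anticipate is site-theoretic bookkeeping rather than anything geometric. One must make sure that the fibre products $N_i\times_N N_j$ invoked above exist as objects of $s_{G,M}$ and, more delicately, that the glued base $N'$ together with its structure map $N'\rightarrow M$ is again an object of $s_{G,M}$, i.e.\ that this map is a $G$-immersion; in the big site $b_{G,M}$ the analogous points were automatic because one works with open maps and blow-ups. I would handle this either by restricting to covers by Koszul slice neighbourhoods (Theorem 5.1), for which the relevant intersections are again slice neighbourhoods and immersivity is stable, or simply by noting that immersivity is a local property on the source, so a glued family of immersions $h_i$ over a cover of $N'$ produces an immersion $N'\rightarrow M$. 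Once this is settled the remainder of the argument is word for word the proof of Proposition 5.3.
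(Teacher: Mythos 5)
Your proposal matches the paper exactly: the paper offers no separate argument for this theorem, stating only that it follows "by using the methods used to prove the proposition 5.3," which is precisely the verbatim transfer of the descent-and-gluing argument that you carry out. Your added attention to the small-site bookkeeping (existence of the fibre products in $s_{G,M}$ and the glued base $N'\rightarrow M$ being a $G$-immersion, handled via Koszul slices or locality of immersivity on the source) is a point the paper glosses over, and it is resolved correctly.
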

 
We want to apply the theory of gerbes  to the sheaves of categories $p_G^s:F_G^s\rightarrow s_{G,M}$ and  $p_G^b:F_G^b(M)\rightarrow b_{G,M}$. Remark that these sheaves of categories are not always gerbes as shows the following example:

Suppose that $M$ is the point, let $Z$ be the group of integers, and consider the bundle $Z^2\rightarrow M$. Let $G$ be a discrete Lie group which acts (trivially) on $M$, there can exist more than two actions of $G$ on $Z^2$ which are not equivalent, thus in this situation, $p_G^s$ is not locally connected.

Here is another example: Suppose that $M=S^n$. Then $M$ is the quotient of the orthogonal group $SO(n+1)$ by $SO(n)$. Consider the trivial bundle $p:P=S^n\times SO(n+1)\rightarrow S^n$.
We can lift the action of $SO(n+1)$ on $P$ by the following actions: Let $g\in SO(n+1)$ and $x\in S^n$. For every $h\in SO(n+1)$, we set $\rho_1(h)(x,g)=(h.x,g)$ and $\rho_2(h)(x,g)=(h.x,hg)$. The lifts $\rho_1$ and $\rho_2$ are not equivalent since their orbits do not have the same dimension. This shows that the lifting of the action of $G$ on $P$ is not always unique.

 We are going to add conditions which insure that the sheaves of categories $p_G^s$ and $p_G^b$  are gerbes.  We start by studying the local situation.

\begin{prop}
Suppose that $M$ is connected and the action of $G$ on $M$ is principal then for every elements $x,y$ of $M$,
the pullback $\rho_x^*p$ of $p$ by the orbit map $\rho_x$ is isomorphic to $\rho_y^*p.$
\end{prop}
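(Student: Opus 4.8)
The plan is to show that the isomorphism class of the $H$-bundle $\rho_z^*p$ over $G/G_z$ is locally constant in $z\in M$; since $M$ is a connected manifold this forces it to be constant, which is the assertion. First a remark making the statement precise: as the action of $G$ on $M$ is principal, all stabilizers $G_z$ are conjugate, so for $z,z'\in M$ and any $a\in G$ with $aG_za^{-1}=G_{z'}$ the map $gG_z\mapsto ga^{-1}G_{z'}$ is a $G$-equivariant diffeomorphism $G/G_z\to G/G_{z'}$; ``$\rho_x^*p\cong\rho_y^*p$'' is understood as an isomorphism of $H$-bundles compatible with such an identification of base spaces. Accordingly, partition $M$ into classes for the relation: $z\sim z'$ if and only if $\rho_z^*p\cong\rho_{z'}^*p$. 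If each class is open, then, $M$ being connected, there is only one class, which is the conclusion.

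So it remains to show that every $x\in M$ has an open neighbourhood on which $z\mapsto[\rho_z^*p]$ is constant. For this I would use the Koszul slice theorem stated above: shrinking the Koszul neighbourhood by means of a $G$-invariant metric, one gets a $G_x$-invariant ball $V'$ around the origin of $V_x$ and a $G$-equivariant diffeomorphism $\iota$ from $G\times_{G_x}V'$ onto an open neighbourhood $W$ of the orbit $G\cdot x$ in $M$, with $\iota([e,0])=x$. The key point is that, because the action is principal, the slice representation of $G_x$ on $V_x$ is trivial: the stabilizer of $\iota([e,v])$ equals the isotropy group $(G_x)_v$ of $v$ for the $G_x$-action on $V_x$, and by the single-orbit-type hypothesis $(G_x)_v$ is conjugate in $G$ to $G_x$; being a closed subgroup of the compact group $G_x$ of the same dimension as $G_x$, it is open in $G_x$, hence a union of connected components of $G_x$, and having the same number of components as $G_x$ it equals $G_x$. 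Thus $G_x$ fixes $V'$, hence, by linearity, all of $V_x$, so $G\times_{G_x}V'$ is simply $(G/G_x)\times V'$ with $G$ acting on the first factor only, and $V'$ is contractible.

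Now for $v\in V'$ put $x_v=\iota([e,v])$; then $G_{x_v}=G_x$ and $\rho_{x_v}:G/G_x\to M$ is the inclusion of the slice $(G/G_x)\times\{v\}$ into $W\subseteq M$, so $\rho_{x_v}^*p$ is the restriction of $p$ to that slice. As $v$ varies over the path-connected set $V'$ these slice inclusions are mutually homotopic (move $v$ along a path in $V'$), hence the bundles $\rho_{x_v}^*p$ are all isomorphic, by homotopy invariance of principal $H$-bundles over the compact manifold $G/G_x$. Finally an arbitrary point of $W$ has the form $\iota([g,v])$; it lies on the orbit of $x_v$, and via the conjugation identification $G/G_x\cong G/(gG_xg^{-1})$ the orbit map $\rho_{\iota([g,v])}$ becomes $\rho_{x_v}$, whence $\rho_{\iota([g,v])}^*p\cong\rho_{x_v}^*p\cong\rho_x^*p$. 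Therefore $z\mapsto[\rho_z^*p]$ is constant on $W$, so the $\sim$-class of $x$ is open, and we are done.

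The step I expect to be the main obstacle is the triviality of the slice representation under the principality hypothesis: this is essentially the only place the hypothesis is genuinely used, and it relies on compactness of $G$ to pass from ``$(G_x)_v$ conjugate to $G_x$'' to ``$(G_x)_v=G_x$''. A secondary nuisance is bookkeeping with the conjugation identifications $G/G_z\cong G/G_{z'}$; the way to keep this painless is, as above, never to fix them globally but only to observe that the partition of $M$ into fibrewise-isomorphism classes has open parts. If moreover condition $C2$ holds, so that each $\rho_z^*p$ carries a $(G,H)$-structure, one may ask the isomorphisms to be $G$-equivariant; since $G$ acts trivially on the slice factor $V'$, the isomorphisms produced above can be arranged $G$-equivariantly, but this refinement is not needed for the statement as given.
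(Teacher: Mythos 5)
Your proof is correct and rests on the same two pillars as the paper's: the Koszul slice around a point of a principal $G$-manifold is a product $(G/G_x)\times V$ with $G$ acting only on the first factor, and then homotopy invariance of principal $H$-bundles (the paper phrases this via the classifying map $F:M\to BH$, you invoke it directly) shows the orbit-map pullbacks agree on that neighbourhood. The differences are in rigor rather than strategy: you actually prove the triviality of the slice representation from the single-orbit-type hypothesis (the compactness argument that $(G_x)_v\subseteq G_x$ conjugate to $G_x$ forces equality), where the paper simply cites Audin; and, more substantively, you handle the passage from local to global correctly. The paper asserts that $y$ ``is an element of this Koszul neighborhood since $M$ is connected,'' which is only true locally; your formulation --- the isomorphism class of $\rho_z^*p$ is locally constant, its equivalence classes are open, and connectedness of $M$ forces a single class --- is the argument the paper's sentence is implicitly gesturing at, and your explicit treatment of the conjugation identifications $G/G_z\cong G/G_{z'}$ also makes precise what ``isomorphic'' means in the statement. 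So nothing is missing; your write-up is a tightened version of the paper's proof.
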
 

\begin{proof}
We can suppose that a Koszul neighborhood of $x$ is isomorphic to the product $(G/G_x)\times V$ (see Audin proof of proposition 2.2.1.) where $V$ is a contractible ball of a finite dimensional vector space and that $y$ is an element of this Koszul neighborhood since $M$ is connected. This implies that the existence of an homotopy between $\rho_x$ and $\rho_y$. Let $F:M\rightarrow BH$ be the classifying map of $p$. The classifying map of $\rho_x^*p$ is $F\circ\rho_x$ which is homotopic to $F\circ\rho_y$. We deduce that
$\rho_x^*p$ is isomorphic to $\rho_y^*p$.
\end{proof}

\medskip

 Given an $H$-principal bundle: $p:P\rightarrow M$ such that $M$ is endowed with the principal action of the compact Lie group $G$, the results of Stewart[19] p. 194-195 motivates  the following condition:

\medskip
$C2$

 There exists an element $x$ of $M$, such that:

- there exists a morphism $l_x:G_x\rightarrow H$ such that the pullback of $p$ by the orbit map
$\rho_x$ is the canonical $H$-bundle of $P_x=(G\times H)/\Delta_x\rightarrow G/G_x$ where $\Delta_x={(g,l_x(g)), g\in G_x}$. 

 We suppose that $P_x$ is endowed with the action of $G$ induced by the action of $G$ on $G\times H$ defined by $g.(g_1,h)=(gg_1,h)$.
\medskip

 We consider the full subcategory $F^{b,l_x}_{G,M}$ of $F^b_{G,M}$ such that each object $p_U:e_U\rightarrow U$ of $F^b_{G,M}$ is a $(G,H)$-bundle such that for every $y$ in $U$, $\rho^*_yp_U$ is isomorphic to $P_x$. We denote by $p_G^{b,l_x}$ the restriction of $p_G^b$ to $F^{b,l_x}_{G,M}$.

We define now the sheaf $l^b_{H,M}$ on $b_{G,M}$ to be the sheaf defined on $b_{G,M}$ such that for every object $f:N\rightarrow M$ of $b_{G,M}$, $l^b_{H,M}$ is the set of  differentiable functions from $N$ to $H$ such that for every $x\in M$, $f(x)$ commutes with  $\l_x(G_x)$ in $H$.

\medskip

We have the following result:

\begin{theo}
Suppose that the condition   $C2$ is verified by the $H$-bundle $p:P\rightarrow M$ and $M$ is connected, then $p^{b,l_x}_{G,M}:F^{b,l_x}_{G,M}\rightarrow b_{G,M}$ is a gerbe bounded by $l_{H,M}^b$. If the classifying cocycle of $p^{b,l_x}_{G,M}$ vanishes, then the isomorphism classes of the $(G,H)$-bundles over $M$ such that there exists a point $y$ of $M$ such that $\rho_y^*p$ is isomorphic to $P_x$ is in bijection with $H^1(b_{G,M},l^b_{H,M})$.
\end{theo}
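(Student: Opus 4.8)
The plan is to verify that $p^{b,l_x}_{G,M}$ satisfies all the hypotheses needed to apply Theorem 3.3 (and its consequences, Theorems 3.1 and 3.2). Since Propositions 5.1 and 5.3 already establish that $p_G^b$ is a sheaf of categories, and the restriction to a full subcategory defined by a fibrewise-closed property preserves the fibred-category and sheaf-of-categories structure, the essential new content is the \emph{local triviality} (non-emptiness and connectedness of the fibres over a generating sieve) and the identification of the band with $l^b_{H,M}$. First I would observe that the Koszul slice theorem, together with condition $C2$, provides for every object $f:N\to M$ of $b_{G,M}$ a covering of $N$ by $G$-invariant opens $U_i$ of the form (blow-ups of) $(G/G_y)\times V_i$ with $V_i$ contractible; on each such $U_i$ the pullback $f^*p|_{U_i}$ is isomorphic, as a $(G,H)$-bundle, to a pullback of $P_x$ — here I would use Proposition 5.6 to propagate the isomorphism type $P_x$ from the distinguished point $x$ to an arbitrary point $y$ (connectedness of $M$ is what makes this possible, and this is exactly where $M$ connected is used). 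Thus $F^{b,l_x}_{G,M}(f_i)$ is non-empty for the $U_i$ in a generating sieve, giving the "locally non-empty" half of gerbe-ness.

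Next I would prove that each fibre $F^{b,l_x}_{G,M}(f_i)$ over such a slice-type object is a connected groupoid. Groupoid-ness is already noted in the Remark before Proposition 5.1 (the inverse of a $(G,H)$-bundle morphism can be computed fibrewise because it commutes with $H$). For connectedness, given two $(G,H)$-bundle structures on the same underlying $H$-bundle $u^*p$ over a slice $U=(G/G_y)\times V$ with $\rho_z^*$ of each isomorphic to $P_x$, I would reduce to the homogeneous case over $G/G_y$: by Proposition 4.3 (applied in this geometric setting), a $(G,H)$-bundle whose underlying bundle is $P_x=(G\times H)/\Delta_x$ and which has a lifting is classified, up to $(G,H)$-isomorphism, by gauge data valued in the commutator of $l_x(G_x)$ in $H$; any two such liftings differ by an element of that commutator group, hence are isomorphic in the fibre, and over the contractible factor $V$ one spreads this out. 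This simultaneously identifies, for each object $e$ of the fibre over $f:N\to M$, the automorphism group $\mathrm{Aut}_{Id}(e)$ with the group of differentiable maps $N\to H$ landing fibrewise in the commutator of $l_x(G_x)$ — which is precisely the definition of $l^b_{H,M}(f)$ — and one checks the compatibility condition of the band definition (commutation of these automorphisms with Cartesian morphisms) from the fact that the identification is given fibrewise by the $H$-action.

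Having shown $p^{b,l_x}_{G,M}$ is a gerbe bounded by $l^b_{H,M}$, the two displayed conclusions follow formally: by Theorem 3.1 the isomorphism class of this gerbe corresponds to a class $c_p\in H^2(b_{G,M},l^b_{H,M})$, and the gerbe is trivial (equivalently $c_p$ vanishes, since by Theorem 3.1 the correspondence is a bijection) exactly when it has a global section over the final object, i.e. a $(G,H)$-bundle over $M$ restricting to $P_x$ at some point; when $c_p$ vanishes, Theorem 3.2 gives the bijection between isomorphism classes of global sections (the desired $(G,H)$-bundles) and $H^1(b_{G,M},l^b_{H,M})$. I expect the main obstacle to be the connectedness-of-fibres step: one must carefully justify that over a Koszul slice the two-variable gluing data behave well and that the reduction to the homogeneous-space classification of Proposition 4.3 is legitimate in the differentiable category (in particular that the $(G,H)$-isomorphism over $G/G_y$ extends differentiably and $G$-equivariantly over the contractible slice $V$), since the earlier propositions are stated topos-theoretically and transporting them to $C_{Diff}$ with the $b_{G,M}$-topology requires the slice theorem and Proposition 5.6 to do real work.
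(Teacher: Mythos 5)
Your proposal is correct and follows essentially the same route as the paper's own proof: cover by Koszul slices (working on the principal blow-up), use condition $C2$ together with contractibility of the slice factor to get non-emptiness and connectedness of the fibres, identify the band via the proposition computing $Aut_G^H$ as the commutator of $l_x(G_x)$ in $H$, and then invoke Giraud's classification theorems for the two cohomological conclusions. The only corrections needed are the internal references: the propagation of the orbit-type of $P_x$ is Proposition 5.4 (not 5.6) and the commutator statement is Proposition 3.3 (not 4.3); otherwise your treatment of fibre connectedness and of the final $H^1$ statement is, if anything, slightly more explicit than the paper's.
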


\begin{proof}
In order to prove this result, we can replace $M$ by its principal resolution and thus suppose without restricting the generality that the action of $G$ on $M$ is principal and the stabilizers of the orbits of its elements are conjugated to the subgroup $L$ of $G$. It remains to prove that $p_G^b$ is locally connected.

Let $(U_i)_{i\in I}$ be a open covering of $M$ by Koszul neighborhoods such that
$U_i$ is the quotient of $G\times V_i$ by  $L$ where $V_i$ is a contractible ball of a finite dimensional vector space. Since the action of $G$ is principal, we can suppose that $U_i$ is $G/L\times V_i$ and we denote by $f_i:G/L\times V_i\rightarrow M$ the canonical embedding. Let $x$ be an element of $V_i$ and $f_x:G/L\times \{x\}\rightarrow G/L\times V_i$ the canonical embedding. Since the property $C2$ is verified by $p$, there exists a representation $l_x:L\rightarrow H$ such that the pullback of $p$ by $f_i\circ f_x$ is the right quotient of $G\times H$ by $\Delta_x=\{(g,l_x(g)), g\in L\}$. Consider the $H$-bundle
$m_i:P_i\rightarrow U_i$ where $P_i$ is the product of the total space of the pullback of $p$ by $f_i\circ f_x$ and $V_i$. The restrictions of $m_i$ on $G/L\times \{x\}$ and $(f_i\circ f_x)p^*$ coincide. Since $U_i$ retracts to $L/G\times\{x\}$, we deduce that $f_i^*p$ is isomorphic to $m_i$. This implies  that $F_{G,M}^{b,l_x}(f_i)$ is not empty and  connected.

The fact that the gerbe $F^{b,l_x}_{G,M}\rightarrow b_{G,M}$ is bounded by the sheaf $l_{H,M}^b$ follows from the proposition 3.3.
\end{proof}

\begin{cor}
Suppose that the action of $G$ on the connected manifold $M$ is free,  then if the condition $C2$ is verified, $F^{b}_{G,M}\rightarrow b_{G,M}$ is a gerbe, in this case  for every  element $y$ of $M$, $\rho_y^*p$ is the trivial $H$-bundle. 
\end{cor}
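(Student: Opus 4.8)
The plan is to deduce this corollary from the gerbe theorem proved just above (the statement that, under $C2$ with $M$ connected, $p^{b,l_x}_{G,M}\colon F^{b,l_x}_{G,M}\to b_{G,M}$ is a gerbe bounded by $l^b_{H,M}$), by showing that freeness of the action collapses $F^{b,l_x}_{G,M}$ onto the whole fibred category $F^b_{G,M}$.

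First I would record what $C2$ becomes when $G$ acts freely on $M$. Every stabilizer $G_x$ is then trivial, so the action has a single orbit type and is in particular principal; consequently $M$ has no singular submanifold, $M$ equals its principal resolution, and $b_{G,M}=s_{G,M}$. In $C2$ the map $l_x\colon G_x\to H$ is the trivial homomorphism of the trivial group, so $\Delta_x=\{(1_G,1_H)\}$ and $P_x=(G\times H)/\Delta_x\to G/G_x$ is simply the trivial $H$-bundle $G\times H\to G$. Thus $C2$ asserts precisely that $\rho_x^{*}p$ is a trivial $H$-bundle for one point $x$. Since $M$ is connected and the $G$-action on it is principal, the proposition comparing $\rho_x^{*}p$ with $\rho_y^{*}p$ applies and gives $\rho_y^{*}p\cong\rho_x^{*}p$ for every $y\in M$; hence $\rho_y^{*}p$ is the trivial $H$-bundle for all $y$, which already proves the last clause of the corollary.

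Next I would check that every object of $F^b_{G,M}$ lies in $F^{b,l_x}_{G,M}$, i.e. restricts over each orbit to a bundle isomorphic to $P_x$. An object $e_N\to N$ of $F^b_{G,M}$ sits over an object $f\colon N\to M$ of $b_{G,M}$ with $e_N$ isomorphic to $f^{*}p$. For $y\in N$ one has $\rho_y^{*}e_N\cong\rho_y^{*}f^{*}p\cong(f\circ\rho_y)^{*}p$; since $f$ is $G$-equivariant and the action on $M$ is free, the stabilizer of $y$ injects into the trivial stabilizer of $f(y)$, so the orbit of $y$ is a copy of $G$ mapped equivariantly and bijectively, hence diffeomorphically, onto the orbit of $f(y)$, and under these identifications $f\circ\rho_y=\rho_{f(y)}$. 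Therefore $(f\circ\rho_y)^{*}p\cong\rho_{f(y)}^{*}p$, which is trivial by the previous step, so $\rho_y^{*}e_N$ is trivial as an $H$-bundle; and a short computation shows that any $(G,H)$-structure on the trivial $H$-bundle over $G$ compatible with left translation is $(G,H)$-isomorphic to the standard one on $P_x$. Hence $F^{b,l_x}_{G,M}=F^b_{G,M}$ and $p^{b,l_x}_{G,M}=p^b_{G,M}$.

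Finally, the gerbe theorem recalled above applies verbatim (its hypothesis $C2$ holds and $M$ is connected) and gives that $p^{b,l_x}_{G,M}$ is a gerbe; by the identification of the previous paragraph this is the functor $p^b_{G,M}\colon F^b_{G,M}\to b_{G,M}$, so the latter is a gerbe as well. The point that genuinely needs care is the equivariant identification of orbits in the third paragraph: one must check that injectivity of $f$ on each orbit really follows from freeness of the action on the base, so that pulling $p$ back along an object of $b_{G,M}$ does not alter its restriction to orbits, and that the classification of $(G,H)$-structures over a single free orbit is as claimed. Everything else is a direct appeal to the two results quoted.
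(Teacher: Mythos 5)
Your argument is correct and follows essentially the same route as the paper's own proof: observe that freeness makes the stabilizers trivial so that $C2$ gives $\rho_x^*p\cong G\times H$, conclude that every $\rho_y^*p$ is trivial and hence $F^{b}_{G,M}=F^{b,l_x}_{G,M}$ with $l_x$ the trivial representation, and then apply the preceding gerbe theorem. You merely supply details the paper leaves implicit (the use of the connectedness/principality proposition to pass from $x$ to all $y$, and the identification of $(G,H)$-structures over a free orbit), which is a faithful elaboration rather than a different approach.
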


\begin{proof}

If the action of $G$ is free, and the condition $C2$ is verified, then for every element $y$ of $M$, $\rho_y^*p$ is isomorphic to $G\times H$ since the stabilizers are trivial groups. We deduce that $F_G^b=F_{G,M}^{l_x,b}$ where $l_x$ is the trivial representation. We can henceforth apply the previous theorem.
\end{proof}

We have remarked that if the action of $G$ on $M$ is principal, then $b_{G,M}$ is equal to $s_{G,M}$ we deduce:

\begin{cor}
Suppose that the action of $G$ on $M$ is principal, and  the condition $C2$ is verified by the $H$-bundle $p:P\rightarrow M$, then $p^{b,l_x}_{G,M}:F^{b,l_x}_{G,M}\rightarrow s_{G,M}$ is a gerbe bounded by $l_{H,M}^b$. 
\end{cor}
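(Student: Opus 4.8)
The plan is to obtain this statement as an immediate consequence of the preceding gerbe theorem (the one asserting that $p^{b,l_x}_{G,M}:F^{b,l_x}_{G,M}\rightarrow b_{G,M}$ is a gerbe bounded by $l_{H,M}^b$ whenever $C2$ holds), combined with the Remark following the definition of $b_{G,M}$. First I would make the site identification explicit: when the action of $G$ on $M$ is principal there are no singular orbits, so the principal resolution $r(M)$ of $M$ is $M$ itself, and by that Remark we have $b_{G,M}=s_{G,r(M)}=s_{G,M}$ as categories. Moreover the topologies $J_G^b$ and $J_G^s$ agree under this equality, since both declare a sieve to be covering exactly when the images of its objects cover $U$. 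Hence the functor $p^{b,l_x}_{G,M}:F^{b,l_x}_{G,M}\rightarrow b_{G,M}$ and the functor $p^{b,l_x}_{G,M}:F^{b,l_x}_{G,M}\rightarrow s_{G,M}$ are literally the same datum over the same site.

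Having made this identification, I would simply invoke the previous theorem: condition $C2$ is assumed for $p:P\rightarrow M$, so that theorem yields that $p^{b,l_x}_{G,M}\rightarrow b_{G,M}$ is a sheaf of categories which is locally trivial, i.e. a gerbe, and that its band is $l_{H,M}^b$. Reading $b_{G,M}$ as $s_{G,M}$ transports both conclusions verbatim, giving that $p^{b,l_x}_{G,M}:F^{b,l_x}_{G,M}\rightarrow s_{G,M}$ is a gerbe bounded by $l_{H,M}^b$. The only hypothesis of the previous theorem not written into the present statement is connectedness of $M$; I would handle this either by adding "connected" to the hypotheses (as in the corollary on free actions just above), or by working on each connected component of $M$ separately, using that a sheaf of categories over the site attached to a disjoint union of $G$-manifolds is a gerbe exactly when its restriction to each summand is one, and that $l_{H,M}^b$ restricts to the analogous band on each summand.

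I do not expect a genuine obstacle here. The substantive content — that each fibre $F^{b,l_x}_{G,M}(f)$ is a non-empty connected groupoid — is precisely the Koszul-neighbourhood argument already carried out in the proof of the previous theorem: cover $M$ by Koszul neighbourhoods $U_i\cong G/L\times V_i$ with $V_i$ a contractible ball, retract $U_i$ onto a single orbit $G/L\times\{x\}$, and use $C2$ to identify every admissible $(G,H)$-bundle over $U_i$ with the fixed model $P_x$. The only point needing a line of care is the bookkeeping: checking that $l_{H,M}^b$, which was defined on $b_{G,M}$, still makes sense and is unchanged once $b_{G,M}$ is renamed $s_{G,M}$ — and that is automatic because the two sites are equal, not merely equivalent.
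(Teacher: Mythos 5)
Your proposal matches the paper's own argument: the paper deduces the corollary precisely from the earlier remark that a principal action makes $b_{G,M}$ identical to $s_{G,M}$ (indeed $b_{G,M}=s_{G,r(M)}$ with $r(M)=M$), and then transports the preceding theorem verbatim. Your extra attention to the agreement of the topologies and to the connectedness hypothesis (which the paper's theorem assumes but the corollary omits, and which is handled exactly as you say, by adding it or arguing componentwise) only makes explicit what the paper leaves tacit.
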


Lashof [14] has studied $(G,H)$-bundles for which the action of $G$ is principal by using homotopy theory.

\medskip

We are going to study now the lifting problem on the site $s_{G,M}$.
Let $G$ be a Lie group which acts on the compact manifold $M$. Recall that if $M$  is a disjoint union $\bigcup_{i=0}^{i=p}M_i$ where $M_i$ is a connected component of a submanifold  type if $i>0$ and  $M_0$ is the submanifold type of the principal orbits. We can assume without restricting the generality that for every $i>0$, there exists a neighborhood $U_i$ of $M$ invariant by $G$ and such that the intersection of $U_i$ and $U_j$ is empty if $i,j>1$. Let $x_i$ be an element of $M_i$ and $L_i$ its stabilizer. We suppose that $L_0$ is a subgroup of $L_i, i>0$.

We set now a new condition to define a classifying gerbe.

\medskip

Condition $C3$.

Suppose that the condition $C2$ is satisfied and there exists a representation
$h_i:L_i\rightarrow H$ such that:

For every $i,j>0$ if  $L_i\subset L_j$, then restriction of $h_j$ to $L_i$ is $h_i$.

$\rho_{x_i}^*p$ is isomorphic to the right quotient of $G\times H$ by $L_i$ defined by $l.(x,y) = (lx,h_i(l)y)$,

$\rho_{x_0}^*p$ is isomorphic to the right quotient of $G\times H$ by $L_0$ defined by $l.(x,y) = (lx,h_i(l)y)$.

We define,  $F^{h}_M$ the subcategory of $F^s_M$ such that for every object $e_U\rightarrow U$ of the  $F^h_M$,
and $x$  an element of $U$,

 if $x$ is in $M_i$, $i\geq 1$, then there exists a Koszul neighborhood, $U'\subset U$ of $x$ which  is  the quotient of $G\times V$ by $L_i$. We suppose that the restriction of $e_U$ to $U'$ is isomorphic to the quotient of $G\times V\times H$ by $L_i$ by the action defined by $l(x,y,z)=(lx,dl(y),h_i(l)z), i>1$.

We denote by $l^{h,s}_M$ the sheaf defined on $s_{G,M}$ such that for every object
$U$ of $s_{G,M}$, $l^{h,s}_M$ is the set of differentiable  functions from $U$ to $H$ such that for every $x\in U$, $f(x)$ commutes with  $l_x(G_x)$.
Let $p^{h}_G:F^h_M\rightarrow s_{G,M}$ be the restriction of $p^s_M$ to $F^h_M$. The condition $C3$ expresses the fact that the functor $p^h_G$ is a locally connected sheaf of categories, thus a gerbe. The proposition 3.3 implies that this gerbe is bounded by $l^{h,s}_M$. These facts are summarize by the following result:

\begin{theo}
Suppose that the conditions  $C2$ and $C3$ are verified, then the functor
$p^h_G:F^h_M\rightarrow s_{G,M}$ is a gerbe bounded by $l^{h,s}_M$. 
\end{theo}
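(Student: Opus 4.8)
The plan is to follow the same two-part template used in the proofs of Theorems 3.3 and 5.3: first show that $p^h_G:F^h_M\to s_{G,M}$ is a fibred category, then show it is a sheaf of categories, then show it is locally connected (hence a gerbe), and finally identify its band. Since $p^h_G$ is the restriction of $p^s_M$ to the full subcategory $F^h_M$, and $p^s_M$ is already known to be a sheaf of categories by Theorem 5.3, the first two points reduce to checking that $F^h_M$ is stable under the relevant operations. Concretely, I would first verify that the pullback of an object of $F^h_M$ along a morphism $u:N\to N'$ of $s_{G,M}$ again lies in $F^h_M$: this is a local condition near each point $x\in N$, and since $u$ is a $G$-immersion it carries a Koszul neighborhood of $x$ of the form $(G\times V)/L_i$ into one of the same isotropy type $L_i$ over $u(x)$, so the prescribed local model $(G\times V\times H)/L_i$ for the $H$-bundle is preserved under pullback. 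This gives that the Cartesian morphisms of $p^s_M$ lying over objects of $F^h_M$ stay inside $F^h_M$, so $p^h_G$ is fibred; the groupoid property of each fibre $F^h_M(u)$ is inherited from $F^s_M(u)$ (morphisms commute with $H$, so inverses are computed fiberwise, exactly as in the remark preceding Proposition 5.1).

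Next I would verify the sheaf condition. Given an object $h_N:N\to M$ of $s_{G,M}$, a sieve $R\in J_G^s(h_N)$, and a Cartesian section $e$ of $p^h_G$ over $R$, the gluing construction in the proof of Proposition 5.3 produces an $H$-bundle $e\to N'$ with commuting free $H$- and $G$-actions; I only need to observe that the glued object again satisfies the condition $C3$ local model, which is immediate because that condition is local on $N$ and each restriction $e_i$ already satisfies it, and the gluing isomorphisms $n_{ij}$ are morphisms of $F^h_M$. Essential surjectivity and full faithfulness of $i_R^{h_N}$ then follow verbatim from the argument in Proposition 5.3, since all the morphisms involved are morphisms of $F^s_M$ that happen to preserve the $C3$-structure. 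Thus $p^h_G$ is a sheaf of categories.

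The substantive point, and the one I expect to be the main obstacle, is local connectedness: for every object $h_U:U\to M$ of $s_{G,M}$ one must exhibit a sieve $R\in J_G^s(h_U)$ such that for each $f:W\to U$ in $R$ the fibre $F^h_M(f)$ is nonempty and connected. Nonemptiness and connectedness over a single Koszul neighborhood $U'=(G\times V)/L_i$ with $V$ a contractible ball follows as in Theorem 5.5: the prescribed bundle $(G\times V\times H)/L_i$ is an object of $F^h_M(U')$, and since $U'$ retracts $G$-equivariantly onto the orbit $G/L_i$, any two objects of $F^h_M(U')$ restrict to isomorphic bundles over the orbit and hence are isomorphic over $U'$ (the isotropy representation is rigidified to be $h_i$ by the $C3$-condition, so there is no ambiguity of the kind exhibited in the $Z^2$ and $S^n$ counterexamples). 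The role of $C3$ is precisely to make these local models compatible across the strata $M_0,\dots,M_p$: on the overlap region $U_i$ near a singular component $M_i$, the generic points have isotropy $L_0\subset L_i$ and the condition ``the restriction of $h_i$ to $L_0$ is $h_0$'' guarantees that the singular model $(G\times V\times H)/L_i$ restricts, over the principal part of $U_i$, to the principal model $(G\times V\times H)/L_0$, so the locally connected structures on the strata patch to a single one on $U$. Once local connectedness is established, $p^h_G$ is a gerbe, and that it is bounded by $l^{h,s}_M$ is exactly Proposition 3.3 applied here: for an object $e_U$ of a fibre, an automorphism over $\mathrm{Id}_U$ is a gauge transformation $y\mapsto y\,a(y)$ with $a:U\to H$, and commutation with the $G$-action through the representations $l_x(G_x)$ forces $a(x)$ to lie in the centralizer of $l_x(G_x)$ for each $x$, which is the definition of the sections of $l^{h,s}_M$; the compatibility of $i_e$ with Cartesian morphisms is the naturality of this identification under restriction, completing the proof.
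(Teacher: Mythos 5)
Your proposal follows essentially the same route as the paper: the paper's entire argument is the remark preceding the theorem, namely that $p^s_G$ is already a sheaf of categories (Theorem 5.4), that condition $C3$ is read as saying precisely that the restricted functor $p^h_G$ is locally connected, and that Proposition 3.3 identifies the band $l^{h,s}_M$. Your write-up simply fills in the stability-under-pullback, gluing, and Koszul-neighborhood connectedness details that the paper leaves implicit, so it is consistent with, and more detailed than, the paper's own proof.
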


We consider now the particular situation where $M=S^n$.  Consider the action of $SO(n)$ on $S^n\subset R^{n+1}$ which fixes the south pole $S$ and the north pole $N$.
This action of $SO(n)$ has two orbit types: one type is represented by the poles, the fixed points of this action. The other type is represented by the intersection of $S^n$ and the hyperplanes orthogonal to the line containing the north and the south poles. We have the following result: (compare with Hambleton and Haussmann [11]).

\begin{theo}
Suppose that $S^n$ is endowed with the action of $SO(n)$ that we have just defined and let $p:P\rightarrow S^n$ be a principal $H$-bundle. Suppose that the conditions $C2$ and $C3$ are verified, then the gerbe $p^h_G:F^h_M\rightarrow s_{G,S^n}$ is trivial. The set of isomorphic classes of $(SO(n),G)$-bundles over $S^n$  is in bijection with $H^1(S^n,l^{h,s}_M)$.
\end{theo}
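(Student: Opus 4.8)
The plan is to produce an explicit global section of the gerbe $p_G^h\colon F_M^h\to s_{G,S^n}$; by the definition of triviality a gerbe is trivial as soon as its fibre over the final object is nonempty, and the final object of $s_{G,S^n}=Diff_G^{im}/S^n$ is $Id_{S^n}$, so it suffices to build an object of $F_M^h$ lying over $S^n$ itself, that is, an $(SO(n),H)$-bundle over all of $S^n$ satisfying the local conditions defining $F_M^h$. The construction uses the elementary equivariant geometry of the action at hand. Stereographic projection from $S$ (resp. from $N$) is $SO(n)$-equivariant, so $U_N:=S^n\setminus\{S\}$ and $U_S:=S^n\setminus\{N\}$ are two $G$-invariant open subsets with $U_N\cup U_S=S^n$, each equivariantly diffeomorphic to the slice representation $R^n$ at the fixed point it contains, while $U_N\cap U_S=S^n\setminus\{N,S\}$ is equivariantly $(SO(n)/SO(n-1))\times(-1,1)$, a cylinder over a single principal orbit $O:=SO(n)/L_0$ with $L_0=SO(n-1)$. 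The pair $(U_N\to S^n,\ U_S\to S^n)$ generates a covering sieve of $J_G^s(Id_{S^n})$, so by the sheaf-of-categories property of $p_G^h$ it is enough to give objects of $F_M^h$ over $U_N$ and over $U_S$ together with an isomorphism between their restrictions to $U_N\cap U_S$; with a two-element covering there is no further cocycle condition to check.

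The local objects come straight from condition $C3$. At the two fixed points $C3$ provides isotropy representations $h_N,h_S\colon SO(n)\to H$, and it forces $h_N|_{L_0}=h_0=h_S|_{L_0}$, where $h_0\colon L_0\to H$ is the representation attached to the principal orbits. Since $U_N$ is equivariantly contractible onto $N$, the underlying $H$-bundle $p|_{U_N}$ is trivial, so I take $e_N:=U_N\times H$ with the lifted action $l\cdot(u,y)=(l\cdot u,\,h_N(l)\,y)$; this is an $H$-bundle locally isomorphic to $p$, carrying an action of $SO(n)$ commuting with $H$ and covering the action on $U_N$, and one checks it satisfies the local requirements of $F_M^h$: at the singular point $N$ its restriction to the orbit of $N$ is exactly $(SO(n)\times H)/SO(n)$ with action $h_N$, as prescribed by $C3$, and at each principal point $x$ a short computation (using $h_0=h_N|_{L_0}$) identifies $\rho_x^*e_N$ with $(SO(n)\times H)/L_0$ via $h_0$. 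Hence $e_N$ is an object of $F_M^h$ over $U_N$, and symmetrically $e_S:=U_S\times H$ with $l\cdot(u,y)=(l\cdot u,\,h_S(l)\,y)$ is an object of $F_M^h$ over $U_S$.

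It remains to glue. Over $U_N\cap U_S\cong O\times(-1,1)$ I look for an isomorphism $\phi\colon e_N|_{U_N\cap U_S}\to e_S|_{U_N\cap U_S}$ in $F_M^h$; writing $\phi(u,y)=(u,\alpha(u)y)$ with $\alpha\colon U_N\cap U_S\to H$, the requirement that $\phi$ intertwine the two $SO(n)$-actions is $\alpha(l\cdot u)=h_S(l)\,\alpha(u)\,h_N(l)^{-1}$, and the function $\alpha([l],t):=h_S(l)\,h_N(l)^{-1}$ works: it is smooth, it is well defined on $O=SO(n)/L_0$ precisely because $h_S$ and $h_N$ agree on $L_0$ (the compatibility clause of $C3$), and the displayed equivariance follows from a one-line group computation. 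Feeding $(e_N,e_S,\phi)$ into the sheaf-of-categories property of $p_G^h$ yields a $(SO(n),H)$-bundle $e$ over $S^n$ which is an object of $F_M^h$ over $Id_{S^n}$; thus $p_G^h$ has a global section and is a trivial gerbe. By the previous theorem $p_G^h$ is a gerbe bounded by $l_M^{h,s}$, so, having a global section, its classifying cocycle vanishes, and Theorem 3.2 identifies the isomorphism classes of its global sections---exactly the $(SO(n),H)$-bundles over $S^n$ belonging to $F_M^h$---with $H^1(S^n,l_M^{h,s})$. This gives both assertions.

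The routine part is verifying that $e_N$, $e_S$ and the glued bundle $e$ really satisfy the defining local conditions of $F_M^h$; this is bookkeeping with Koszul slices. The one step that genuinely uses the hypotheses rather than the special shape of $S^n$ is the construction of the gluing isomorphism $\phi$ over the equatorial cylinder, and here the compatibility clause of $C3$---that the isotropy representations $h_N$ and $h_S$ at the two fixed points restrict to the same representation $h_0$ of the principal isotropy $L_0=SO(n-1)$---is exactly what makes $\alpha$ well defined on $SO(n)/L_0$; without it the two local lifts need not be isomorphic over $U_N\cap U_S$ and the gerbe could carry a nontrivial class. A subsidiary point, which the explicit formula for $\alpha$ lets us sidestep but which one would otherwise need, is that a $G$-equivariant $H$-bundle over the cylinder $O\times(-1,1)$ is determined up to equivariant isomorphism by its restriction to the orbit $O$, a $G$-homotopy invariance statement of the kind used in the proof of Proposition 5.4.
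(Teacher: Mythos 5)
Your proposal is correct and follows essentially the same route as the paper: cover $S^n$ by the two $SO(n)$-invariant neighbourhoods of the poles, trivialize the bundle over each contractible piece, lift the action via the isotropy representations supplied by $C3$, and glue along the equatorial region, the compatibility $h_N|_{L_0}=h_S|_{L_0}$ being exactly what makes the gluing possible; the concluding bijection with $H^1(S^n,l^{h,s}_M)$ then follows from the triviality of the gerbe as in the paper. Your version is in fact more explicit than the paper's (the formula $\alpha([l],t)=h_S(l)h_N(l)^{-1}$ spells out the gluing isomorphism that the paper only asserts exists), but it is the same argument.
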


\begin{proof}
Let $U_S$ be the south hemisphere of $S^n$ and $U_N$ its north hemisphere. The restriction of $p^h_G$ to $U_S$ and $U_N$ are trivial, since $U_N$ and $U_S$ are contractible. Let $p_S:e_S\rightarrow U_S$ and $p_N:e_N\rightarrow U_N$ be  objects of $F^h_G$. There are trivial bundles since $U_N$ and $U_S$ are contractible. Thus there exists representations $\rho_S,\rho_N:SO(n)\rightarrow H$ such that $e_S$ (resp. $e_N$) is the quotient of $U_S\times H$ (resp. $U_N\times H$) by action of $SO(n)$  defined by
$h(x,y) =(h.x,\rho_S(h)(y))$ (resp. $h(x,y)=(h.x,\rho_N(h)(y))$. Since the condition $C3$ is verified, we can glue $(U_S\times H)/SO(n)$ and $(U_N\times H)/SO(n)$ along $(S^{n-1}\times H)/SO(n)$ and obtain a global object of $p^h_G$. 
\end{proof}

\bigskip

{\bf Applications to algebraic geometry.}

\bigskip

Let $S$ be a scheme, and $(C_S,J_S)$  the category $C_S$ of $S$-schemes endowed with the \'etale   topology $J_S$. Consider the group schemes $G$ and $H$ of $C_S$ and  let $X$ be a $S$-scheme  endowed with an action of $G$ and $p:P\rightarrow X$ be an $H$-torsor. Can the action of $G$ be lifted to $P$ to an action which commutes with $H$ ?

\medskip
We can apply here
 the results obtained in the general situation of topoi. We also have have the exact sequence:
 $$
 1\rightarrow {\cal G}(P)\rightarrow Aut_G(P)\rightarrow G
 $$
Where ${\cal G}(P)$ is the group of gauge automorphisms of $p$ and $Aut_G(P)$ the group of automorphisms of $P$ above elements of $G$. Suppose that the condition $C1$ is verified, the theorem 3.4 implies the following result: 
 
\begin{theo}
Suppose that the condition $C1$ is verified; the action of $G$ on $X$ can be lifted to $P$ if and only if the classifying cocycle $c_p$  of the gerbe $p_G^{Aut_G(P)}:F_G^{Aut_G(P)}\rightarrow B_{G}$ is trivial. In this case, the set of isomorphism classes of the liftings of $G$ is in bijection with $H_{et}^1(B_{G},L_{\cal G})$.
\end{theo}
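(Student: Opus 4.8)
The plan is to obtain the statement as a direct specialization of the general topos-theoretic lifting theorem (Theorem 3.4) to the étale topos of $S$-schemes; essentially no new argument is needed beyond a translation of the data.

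First I would set up the site. The pair $(C_S,J_S)$ is a site, and its category of sheaves $C=Sh(C_S,J_S)$ is a Grothendieck topos. Since the étale topology is coarser than the canonical topology of $C_S$, the Yoneda embedding realises $C_S$ as a subcategory of $C$; hence the group schemes $G$ and $H$ are group objects of $C$, the $S$-scheme $X$ with its $G$-action is a $G$-object of $C$, and the $H$-torsor $p:P\rightarrow X$ — which by definition is étale-locally trivial — is exactly an $H_X$-torsor of the topos $C$ in the sense of Definition 2.8. Thus \textbf{Question 1} for $(G,H,X,P)$ in the category of $S$-schemes is literally an instance of the general lifting problem posed in Section 2.

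Next, condition $C1$ is assumed, so the construction of Section 2 applies and yields the exact sequence of group objects of $C$
$$
1\rightarrow {\cal G}(P)\rightarrow Aut_G(P)\rightarrow G\rightarrow 1,
$$
where ${\cal G}(P)$ is the sheaf of gauge automorphisms of $p$ and $Aut_G(P)$ the sheaf of $G$-equivariant automorphisms of $P$ lying over elements of $G$, the existence of a lifting being equivalent to the splitting of this sequence. Here I would flag the one point requiring attention in the algebraic setting: the presheaves $Aut_G(P)'$ and ${\cal G}(P)$ must be representable by a group object $Aut_G(P)$ of $C$. In general these are sheaves on the étale site (algebraic spaces), and under standard hypotheses — $H$ affine, $X$ of finite type, or $p$ projective — they are representable by group schemes; since the statement already presupposes the representability hypothesis of Section 2 used to form $Aut_G(P)$, I would simply invoke it. I expect this representability bookkeeping, rather than anything cohomological, to be the only real obstacle.

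Finally I would apply Theorem 3.4 verbatim: the fibred category $p_G^{Aut_G(P)}:F_G^{Aut_G(P)}\rightarrow B_G$ is a gerbe bounded by $L_{{\cal G}(P)}$, a lifting of the $G$-action to $P$ exists if and only if its classifying $2$-cocycle $c_p$ is trivial, and when $c_p$ vanishes the set of isomorphism classes of liftings is in bijection with $H^1(B_G,L_{{\cal G}(P)})$. It then only remains to observe that $H^1(B_G,L_{{\cal G}(P)})$, the non-abelian $H^1$ of Giraud computed in the classifying topos $B_G$ formed inside $C=Sh(C_S,J_S)$, is by definition the étale cohomology $H^1_{et}(B_G,L_{\cal G})$ appearing in the statement, since cohomology of sheaves on (the classifying topos of a group object in) the étale topos is étale cohomology. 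This completes the proof; all substantive content is imported from Section 3.
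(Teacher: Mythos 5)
Your proposal is correct and follows essentially the same route as the paper, which proves this theorem simply by specializing Theorem 3.4 to the topos of sheaves on the \'etale site of $S$-schemes, the condition $C1$ supplying the exact sequence $1\rightarrow {\cal G}(P)\rightarrow Aut_G(P)\rightarrow G\rightarrow 1$. Your additional remarks on the representability of $Aut_G(P)$ and on identifying $H^1(B_G,L_{{\cal G}(P)})$ with $H^1_{et}(B_G,L_{\cal G})$ are careful refinements of points the paper leaves implicit, not a different argument.
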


Let $X$ be a smooth affine irreducible variety defined over the algebraically closed field $k$, and $G$ a reductive group which acts on $X$. Let $x$ be an element of $X$ whose orbit is closed. Luna [16] has shown that:

\begin{theo}
There exists an affine subvariety $V$ of $X$ stable by $G_x$ which contains $x$ and an \'etale $G$-morphism $f:G\times_{G_x}V\rightarrow X$ whose image is an open 
affine subvariety of $X$.
\end{theo}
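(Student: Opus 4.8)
The plan is to reproduce Luna's original argument, passing from the general affine situation to a linear model and then exploiting the reductivity of both $G$ and the isotropy group $G_x$. First, since the orbit $G\cdot x$ is closed in the affine variety $X$ and $G$ is reductive, the homogeneous space $G/G_x\cong G\cdot x$ is affine, so by Matsushima's criterion $G_x$ is itself reductive. This reductivity is what lets us split, as $G_x$-modules, every exact sequence of $G_x$-modules that will appear.

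Next I would embed $X$ as a closed $G$-subvariety of a finite-dimensional $G$-module $W$; this is possible because $X$ is affine and the action is algebraic, so the coordinate ring $k[X]$ is the union of its finite-dimensional $G$-stable subspaces. Then $G\cdot x$ is also closed in $W$. Regarding $W=T_xW$, the subspace $T_x(G\cdot x)$ is a $G_x$-submodule, so by reductivity of $G_x$ we may choose a $G_x$-stable complement $N$ with $W=T_x(G\cdot x)\oplus N$. Put $S=x+N$, a $G_x$-stable affine subspace through $x$, and let $V_0=S\cap X$, a $G_x$-stable closed affine subvariety of $S$ containing $x$. Since $T_x(G\cdot x)\subseteq T_xX\subseteq W=T_x(G\cdot x)\oplus N$, the smoothness of $X$ ensures that $S$ and $X$ meet transversally at $x$, so $V_0$ is smooth at $x$ with $T_xV_0=N\cap T_xX$ and $T_xX=T_x(G\cdot x)\oplus T_xV_0$; thus $V_0$ is a slice transverse to the orbit.

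The map $(g,v)\mapsto g\cdot v$ is invariant for the $G_x$-action $h\cdot(g,v)=(gh^{-1},hv)$, hence descends to a $G$-morphism $\bar f:G\times_{G_x}V_0\to X$, where $G\times_{G_x}V_0$ is affine, being the quotient of the affine variety $G\times V_0$ by the free action of the reductive group $G_x$. The tangent space of $G\times_{G_x}V_0$ at the class of $(e,x)$ is naturally $T_x(G\cdot x)\oplus T_xV_0$, and the differential of $\bar f$ there is the canonical map onto $T_xX$, which by the previous paragraph is an isomorphism; hence $\bar f$ is étale at $[(e,x)]$. Moreover $\bar f$ restricts to an isomorphism from the closed orbit $G\times_{G_x}\{x\}\cong G/G_x$ onto the closed orbit $G\cdot x$.

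It remains to shrink $V_0$ to a $G_x$-stable affine open $V\ni x$ so that $f=\bar f|_{G\times_{G_x}V}$ becomes étale everywhere with open affine image. This is exactly Luna's fundamental lemma applied to $\bar f$: a $G$-morphism of affine $G$-varieties that is étale at a point with closed orbit and maps that orbit isomorphically onto a closed orbit is, after restriction to a suitable $G$-saturated affine open, étale, with the induced map of quotients $V/\!\!/G_x\to X/\!\!/G$ étale and the square relating $G\times_{G_x}V$, $X$, $V/\!\!/G_x$ and $X/\!\!/G$ Cartesian. From that Cartesian description the image of $f$ equals the preimage under $\pi:X\to X/\!\!/G$ of an open subset of $X/\!\!/G$, hence is a saturated open subvariety of $X$ and therefore affine. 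I expect the proof of the fundamental lemma itself — descending étaleness to the GIT quotients and establishing the Cartesian square, which relies on the existence and categorical-quotient property of $X/\!\!/G$ for reductive $G$, on Zariski's main theorem, and on the fact that $V$ meets $G\cdot x$ precisely in the closed orbit $G_x\cdot x=\{x\}$ — to be the genuine obstacle; the equivariant embedding, the construction of $V_0$, and the differential computation are all routine.
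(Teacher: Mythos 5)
The paper does not actually prove this statement: it is quoted verbatim from Luna [16] (``Luna [16] has shown that:''), so there is no internal proof to compare against. Your outline is, for the most part, a faithful reconstruction of Luna's original argument -- Matsushima's criterion to get reductivity of $G_x$ from closedness of the orbit, a $G$-equivariant closed embedding of $X$ into a finite-dimensional $G$-module $W$, a $G_x$-stable complement $N$ of $T_x(G\cdot x)$ in $W$ giving the slice $V_0=(x+N)\cap X$, the transversality/differential computation showing $\bar f:G\times_{G_x}V_0\to X$ is \'etale at $[(e,x)]$, and then Luna's fundamental lemma to saturate and shrink. That is exactly the route the cited source takes, so as a matter of approach you are aligned with the reference rather than diverging from it.

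Two caveats. First, you explicitly defer the fundamental lemma, and you are right that it is the genuine content of the theorem: without proving (or at least precisely invoking) the statement that a $G$-morphism of affine $G$-varieties, \'etale at a point of a closed orbit and injective on that orbit, becomes \'etale with Cartesian quotient square after restriction to a suitable saturated affine open, your argument is an outline rather than a proof. Since the paper itself only cites Luna, this is acceptable as a citation, but it should be flagged as such rather than presented as a step you could fill in routinely. Second, the final deduction ``saturated open subvariety and therefore affine'' is too quick: a saturated open subset of $X$ need not be affine in general. What the fundamental lemma gives is that the image of $f$ is $\pi^{-1}(U')$ for some open $U'\subseteq X/\!\!/G$, and one must further shrink $V$ so that $U'$ can be taken to be an affine (e.g.\ principal) open subset; affineness of the image then follows because the quotient morphism $\pi$ is an affine morphism. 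With that adjustment, and the fundamental lemma properly cited or proved, your reconstruction matches Luna's theorem as used in the paper.
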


Luna [16] has also defined a stratification of $X$ as follows: 

We denote by ${\cal M}$ the space whose elements are homogeneous vectors bundles isomorphic to $G\times_HN$ where  $H$ is a reductive subgroup of $G$, and $N$ a finite dimensional $k$-vectors space endowed with an action of $H$.   Let $X/G$ be the category quotient of $X$ by $G$ $p_X:X\rightarrow X/G$ the projection morphism. We can define the map $\mu_X:X/G\rightarrow {\cal M}$ such that $\mu_X(a)$ is the class of the normal bundle of ${p_X}^{-1}(a)$. The image of $\mu_X$ is finite. This implies that there exists a finite number of orbit types that we denote by $[L_1],...,[L_n]$. Let $\lambda$ be an element of ${\cal M}$, $M_{\lambda}={p_X}^{-1}((\mu_X)^{-1}(\lambda))$ is a closed subvariety of $X$. Moreover there exists an open stratum$M_{\lambda_1}$ called the principal stratum.

This result enables to define the site $Et^G_X$ whose objects are \'etale morphisms $h:Y\rightarrow X$ such that $Y$ is endowed with an action of $G$, $h$ is a $G$-morphism and its image is open.

Consider the category $F_X^G$ whose objects are $H$ bundles $p_{e_U}:e_U\rightarrow U$ such that:

- There is an \'etale morphism $h_U:U\rightarrow X$ which is an object of $Et^G_X$,

- The bundle $p_{e_U}:e_U\rightarrow U$ is the pullback of $p:P\rightarrow X$ by $h_U$,

- $e_U$ is endowed with a $(G,H)$-structure such that $p_{e_U}$ is a $G$-morphism.

A morphism of $F_X^G$ is defined by a morphism  $f:e_U\rightarrow e_{U'}$ of $H$-torsors such that there exists a morphism $g:U\rightarrow U'$ of $Et_X^G$ such that $h_{U'}\circ f = g\circ h_U$.

We define $p_X^G:F_X^G\rightarrow Et_X^G$ to be the functor which sends $h_U$ to $U$ and the morphism $f:e_U\rightarrow e_{U'}$ above $g$ to $g$.

\medskip

We suppose that the following condition is satisfied:

$C3a$ There exist morphisms $h_i:L_i\rightarrow H$ such that:

If $L_i\subset L_j$ then $h_i$ is the restriction of $h_j$ to $L_i$.

Let $x$ be an element of $M_{\lambda_i}$. The pullback of $p:P\rightarrow X$ by the orbit map $\rho_x$ is isomorphic to the quotient of $G\times H$ by $L_i$.

We denote by $l_X^H$ the sheaf defined on $Et_X^G$ such that for every object $U$ of $Et_X^G$, $l_X^H(U)$ is the set of morphisms $U\rightarrow H$ which are constant on the orbits of $G$  and such that for every $x\in U$ and $f\in l_X^H(U)$, $f(x)$ is in the commutator of $L_i$ if $x$ is fixed by $L_i$.

\begin{theo}
Suppose that the condition  $C3a$ is satisfied then $p_X^G$ is a gerbe bounded by $l_X^H$.
\end{theo}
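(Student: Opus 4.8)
The plan is to follow the same three step pattern used for the gerbes $p^{b,l_x}_{G,M}$ and $p^h_G$ in the differentiable setting, replacing the Koszul slice theorem by Luna's slice theorem stated above. Concretely I would prove in turn that $p_X^G:F_X^G\rightarrow Et_X^G$ is a fibred category whose fibres are groupoids, that it is a sheaf of categories for the étale topology, and finally that it is locally connected with band $l_X^H$.

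First I would check that $p_X^G$ is fibred in groupoids, by repeating verbatim the argument used to prove that $p_G^b$ is a fibred category. Given a morphism $g:U\rightarrow U'$ of $Et_X^G$ and an object $e_{U'}$ of $F_X^G$ over $U'$, the scheme-theoretic fibre product $e_U=U\times_{U'}e_{U'}$ is an $H$-torsor over $U$, coincides with the pullback $h_U^*P$, and carries the diagonal $G$-action together with the $H$-action; these commute and the projection $e_U\rightarrow U$ is a $G$-morphism, so $e_U$ lies in $F_X^G$, and $e_U\rightarrow e_{U'}$ is Cartesian by the universal property of fibre products. Composites of Cartesian morphisms are again Cartesian because each fibre $F_X^G(f)$ is a groupoid: a morphism of $F_X^G(f)$ above the identity is a morphism of $H$-torsors commuting with $G$, hence an isomorphism on each fibre of the $H$-action, hence invertible. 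Next, to see that $p_X^G$ is a sheaf of categories, I would copy the proof that $p_G^b$ is a sheaf of categories: given a covering sieve $R\in J^G_X(h_U)$ generated by $(f_i:U_i\rightarrow U)_{i\in I}$ and a Cartesian section $e$ over $R$, I form the descent isomorphisms $n_{ij}:e_j^i\rightarrow e_i^j$ over $U_i\times_U U_j$ satisfying the Chasles relation, and glue. Here the one genuinely algebraic input is effectivity of étale descent for $H$-torsors (which holds because $H$-torsors are affine morphisms, so faithfully flat descent of schemes applies); the glued object inherits commuting $G$- and $H$-actions, its projection is a $G$-morphism, and it is the pullback of $P$ by $h_U$, giving essential surjectivity of the restriction functor. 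Full faithfulness is the same gluing argument for morphisms as in that proof.

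The heart of the matter is local connectedness. Fix an object $h_U:U\rightarrow X$ of $Et_X^G$ and a point of $U$ lying, after passing to the closed orbit in its closure, in a stratum $M_{\lambda_i}$ with isotropy $L_i$. Applying Luna's slice theorem to the corresponding point of $X$ produces an étale $G$-morphism $f:G\times_{L_i}V\rightarrow X$ onto an open $G$-stable subvariety meeting that orbit, with $V$ an affine $L_i$-stable slice which, in the refined form of the theorem, admits an $L_i$-equivariant étale map to the normal representation $N$ and is étale-locally a representation of $L_i$; the fibre products of $U$ with such slices form a covering sieve $R\in J^G_X(h_U)$. Over the slice $S=G\times_{L_i}V$, condition $C3a$ identifies the restriction of $p$ to the zero orbit $G/L_i\subset S$ with the homogeneous bundle $(G\times H)/L_i$ given by $h_i$; I would then argue that the pullback $f^*p$ over all of $S$ is isomorphic, as a $(G,H)$-bundle, to the pullback along $S\rightarrow G/L_i$ of this homogeneous bundle. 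The algebraic substitute for the contractibility of the Koszul ball used in the differentiable case is the $L_i$-equivariant étale contraction of the slice onto its zero section together with rigidity of equivariant torsors trivialized along $G/L_i$; the compatibility $h_i=h_j|_{L_i}$ when $L_i\subset L_j$ built into $C3a$ is exactly what makes this local model coherent across the different strata $M_{\lambda_j}$ that meet $S$. Hence $F_X^G(f)$ is nonempty and, its objects being all isomorphic to this model, connected, so $p_X^G$ is a gerbe.

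Finally, for the band: an automorphism of an object $e_U$ of $F_X^G$ above $\mathrm{Id}_U$ is a gauge transformation of the $H$-torsor $e_U$ commuting with the $G$-action, so fibrewise it is right translation by a morphism $U\rightarrow H$; commutation with $G$ forces this morphism to be constant on $G$-orbits and, at a point fixed by $L_i$, to take values in the centralizer of $h_i(L_i)$, which is precisely the description of $l_X^H(U)$. The compatibility of these identifications with restrictions and with Cartesian morphisms required in the definition of a band follows from Proposition 3.3, exactly as in the differentiable case, and this finishes the proof that $p_X^G$ is a gerbe bounded by $l_X^H$. I expect the main obstacle to be the uniqueness-up-to-isomorphism of the $(G,H)$-structure over a full Luna slice in Step 3: $C3a$ only controls $p$ along orbits, and upgrading this to rigidity over the slice requires the refined (étale-local linearization) form of Luna's theorem and a descent/rigidity argument for equivariant torsors; a secondary technical point is ensuring étale descent is effective in Step 2, which is why one works with affine $H$.
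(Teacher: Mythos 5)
The paper itself states this theorem without any proof, so there is nothing to compare line by line; judged against the template the paper uses in the differentiable case (Proposition 5.1, Proposition 5.3, Theorem 5.2, Theorem 5.5), your architecture is the right one, and your first, second and fourth steps are essentially sound: the pullback argument for the fibred-category structure, the descent argument for the sheaf-of-categories property (your remark that effectivity of \'etale descent needs $H$-torsors to be affine, or a passage to sheaf torsors in the topos, is a genuine and correct algebraic addendum), and the identification of the band with $l_X^H$ via the Proposition 3.3-style computation of automorphisms above the identity.

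The genuine gap is in your third step, and it is exactly where the algebraic case stops being a translation of the differentiable one. In Theorem 5.2 the paper gets local nonemptiness and connectedness from the fact that a Koszul neighborhood retracts onto the orbit $G/L\times\{x\}$, invoking homotopy invariance of principal bundles. Over a Luna slice $S=G\times_{L_i}V$ there is no such argument: there is no ``\'etale contraction of the slice onto its zero section'' in the category of varieties, and torsors are not homotopy invariant over affine bases (an $H$-torsor or even a line bundle on $S$ can be nontrivial while restricting trivially to the closed orbit $G/L_i$). Condition $C3a$ constrains $p$ only along orbit maps $\rho_x$, so it does not by itself give that $f^*p$ carries a $(G,H)$-structure, nor that any two objects of $F_X^G$ over $S$ are isomorphic; your appeal to ``rigidity of equivariant torsors trivialized along $G/L_i$'' is the statement to be proved, not a tool, and would require genuine extra input (for instance $H$ special, or a henselization-along-the-closed-orbit plus algebraization argument, or building the local model into the definition of the fibres as the paper does with $F^h_M$ versus $F^s_M$ in the differentiable case -- note that $F_X^G$ as defined imposes no such restriction, while the paper's own examples of inequivalent lifts show connectedness can fail without it). You honestly flag this as the main obstacle, but as written the central step of the proof is asserted rather than established, so the proposal is incomplete precisely where the theorem has content.
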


\bigskip
\bigskip

{\bf References.}

\bigskip
\bigskip

1. AUDIN, Michele. Op\'erations hamiltoniennes de tores sur les vari\'et\'es symplectiques:quelques m\'ethodes topologiques, 1989.
\smallskip

2. BRION, Michel. On automorphism groups of fiber bundles. arXiv preprint arXiv:1012.4606, 2010.

\smallskip
3. BRANDT, Didier and HAUSMANN, Jean-Claude. Th\'eorie de jauge et sym\'etries des fibr\'es. In : Annales de l'institut Fourier. 1993. p. 509-537.
\smallskip

4. BREEN, Lawrence et MESSING, William. Differential geometry of gerbes. Advances in Mathematics, 2005, vol. 198, no 2, p. 732-846.
\smallskip

5. DEMAZURE, Michel et GROTHENDIECK, Alexandre. Sch\'emas en groupes: Groupes de type multiplicatif et structure des sch\'emas en groupes g\'en\'eraux.  1970.
\smallskip

6. GIRAUD, Jean. Cohomologie non ab\'elienne. 1966
\smallskip

7. GIRAUD, Jean. M\'ethode de la descente. M\'emoires de la Soci\'et\'e Mathématique de France, 1964, vol. 2, p. III1-VIII150.
\smallskip

8. GROTHENDIECK, Alexandre. SGA 4: Th\'eorie des Topos et Cohomologie \'Etales des Sch\'emas, tome 1–3. Lecture Notes in Math, vol. 269.

\smallskip
9. GROTHENDIECK, Alexandre. Introduction au calcul fonctoriel. Alger 1967.

\smallskip
10. GROTHENDIECK, Alexandre, R\'ecoltes et s\'emailles.

\smallskip
11. HAMBLETON, Ian and HAUSMANN, Jean-Claude. Equivariant principal bundles over spheres and cohomogeneity one manifolds. Proceedings of the London Mathematical Society, 2003, vol. 86, no 01, p. 250-272.

\smallskip
12. HAMBLETON, Ian and HAUSMANN, Jean-Claude. Equivariant bundles and isotropy
 representations. arXiv preprint arXiv:0704.2763, 2007.
 
 \smallskip
13. HATTORI, Akio and YOSHIDA, Tomoyoshi. Lifting compact group actions in fiber bundles. Japan. J. Math, 1976, vol. 2, no 1, p. 13-25. 

\smallskip
14. LASHOF, Richard, et al. Equivariant bundles over a single orbit type. Illinois Journal of Mathematics, 1984, vol. 28, no 1, p. 34-42.

\smallskip
15. LASHOF, R. K., MAY, J. P., et SEGAL, G. B. Equivariant bundles with abelian structural group. In : Proceedings of the Northwestern Homotopy Theory Conference (Evanston, Ill., 1982). 1983. p. 167-176.

\smallskip
16. LUNA, Domingo. Slices \'etales. M\'emoires de la Soci\'et\'e Math\'ematique de France, 1973, vol. 33, p. 81-105.

\smallskip
17. NGUYEN Dinh Ngoc, Cohomologie non ab\'elienne et classes caract\'eristiques. C. R. Ac. Sc. 251, 2453
(1960).

 \smallskip
18. SERRE, Jean-Pierre, Cohomologie Galoisienne, 1965.

\smallskip
19. STEWART, T. E. Lifting group actions in fibre bundles. Annals of Mathematics, 1961, p. 192-198.
\smallskip

20. TSEMO,  Aristide. Non abelian cohomology: the point of view of gerbed tower. Afr. Diaspora J. Math. 4 (2007), no. 1, 67–85. 

\smallskip
21. TSEMO, Aristide. Gerbes, 2-gerbes and symplectic fibrations. Rocky Mountain J. Math. 38 (2008), no. 3, 727 - 777. 
\end{document}